\documentclass[12pt]{amsart}
\usepackage[margin=1.4in]{geometry}

\usepackage[utf8]{inputenc}
\usepackage{mathtools} 
\usepackage{physics}
\usepackage{amsmath}
\usepackage{amsthm}
\usepackage{amssymb}
\usepackage{hyperref}
\usepackage[all]{xy}
\usepackage{tikz-cd}
\usepackage{bbold}
\usepackage{colortbl}
\usepackage{nicematrix}
\usepackage{stmaryrd}
\usepackage{mathrsfs} 
\usepackage[normalem]{ulem} 
\usepackage{dsfont}
\usepackage{url}
\usepackage{graphicx}
\usepackage{wrapfig}

\newtheorem{theorem}{Theorem}[subsection]

\newtheorem{lemma}[theorem]{Lemma}
\newtheorem{corollary}[theorem]{Corollary}

\newtheorem*{theorem*}{Theorem}

\theoremstyle{definition}
\newtheorem{definition}[theorem]{Definition}
\newtheorem{example}[theorem]{Example}
\newtheorem{construction}[theorem]{Construction}

\newtheorem*{definition*}{Definition}

\theoremstyle{remark}
\newtheorem{remark}[theorem]{Remark}

\makeatletter
\let\c@equation\c@theorem

\makeatother

\newcommand{\g}{\mathfrak{g}}
\newcommand{\kompact}{\mathfrak{k}}

\newcommand{\ab}{\mathfrak{a}}

\newcommand{\p}{\mathfrak{p}}
\newcommand{\m}{\mathfrak{m}}
\newcommand{\n}{\mathfrak{n}}

\newcommand{\R}{\mathbb{R}}
\newcommand{\C}{\mathbb{C}}
\newcommand{\N}{\mathbb{N}}

\newcommand{\HS}{\mathcal{H}}

\newcommand{\A}{\mathcal{A}}
\newcommand{\E}{\mathcal{E}}
\newcommand{\F}{\mathcal{F}}

\newcommand{\ad}{\operatorname{ad}}

\newcommand{\cfunc}{\mathbf{c}}

\newcommand{\HIwasawa}{\boldsymbol{H}}
\newcommand{\kIwasawa}{\boldsymbol{k}}
\newcommand{\nIwasawa}{\boldsymbol{n}}

\newcommand{\GmodmodK}{K\operatorname{\setminus} G\operatorname{/} K}

\newcommand{\supp}{\operatorname{supp}}
\newcommand{\smooth}{C^\infty}
\newcommand{\csmooth}{C_c^\infty}
\newcommand{\schwartz}{\mathscr{S}}
\newcommand{\HCschwartz}{\mathscr{C}}

\newcommand{\Circled}[1]{%
  \tikz[baseline=(X.base)] 
    \node[draw, circle, inner sep=1pt](X){#1};%
}

\title[Equivariant PSDOs on Noncompact Symmetric Spaces]{Complete Symbols of Equivariant Pseudodifferential Operators on Noncompact Symmetric Spaces}
\author{Satwata Hans}
\address{Department of Mathematics, The Pennsylvania State University, University Park, PA 16802.}
\email{hans@psu.edu}
\date{}

\begin{document}

\begin{abstract}
    We study $G$-equivariant Hörmander pseudodifferential operators on a noncompact symmetric space $G/K$. We define a notion of a \emph{complete symbol function}, called the \emph{Harish-Chandra symbol function}, on the spherical tempered dual of $G$, for operators that satisfy a rapid off-diagonal decay condition on their Schwartz kernels, and we characterize the class of such operators in terms of their Harish-Chandra symbol function.
\end{abstract}

\maketitle

\section{Introduction}
At the heart of the theory of pseudodifferential operators on $\R^n$, as studied by Kohn and Nirenberg in \cite{Kohn-Nirenberg65}, and accounted for in detail by Hörmander in \cite[Chapter XVIII]{Hormander-III} is the notion of a \emph{complete symbol function} $a(x,\xi)\in \smooth(\R^n\times \R^n)$, see \eqref{eqn-symbol-bound-Rn}, which defines a \emph{pseudodifferential operator (PSDO)} $T_a:\csmooth(\R^n)\rightarrow \smooth(\R^n)$ via the Fourier inversion formula on $\R^n$ i.e., \begin{equation*}
    (T_a f)(x) \coloneqq \int_{\R^n} a(x,\xi)\,\hat{f}(\xi)\,e^{i x\cdot \xi} \, d\xi\,.
\end{equation*} It turns out that pseudodifferential operators are invariant under diffeomorphisms, which lets one extend the theory of PSDOs to a smooth manifold $M$ by defining the operator on local charts. These operators are called \emph{Hörmander pseudodifferential operators on $M$}. However, they cease to have a meaningful notion of a complete symbol: the only version of the symbol function that survives the leap from Euclidean spaces to smooth manifolds is the \emph{principal symbol} $\sigma:T^*M\setminus\{0\}\to \C$, which encodes the ``leading order'' behavior of a Hörmander PSDO.

The non-existence of a complete symbol function for a Hörmander pseudodifferential operator can be linked to the non-existence of a suitable Fourier theory on an arbitrary smooth manifold $M$. So, in this paper we shall investigate the following question: if $M$ is a homogeneous space for a Lie group $G$, does the representation theory of $G$ gives rise to a meaningful notion of a complete symbol for Hörmander PSDOs on $M$?  

For \emph{compact} Lie groups and compact homogeneous spaces, such a study has been pursued extensively, as can be found in the book \cite{RuzhanskyTurunen2010} by Ruzhansky and Turunen. On the other hand, for a \emph{noncompact} Lie group $G$ (and noncompact homogeneous spaces), the question has remained mostly unexplored, perhaps due to the more nuanced theory of tempered unitary representations of $G$. In a recent paper \cite{DebelloHigson}, DeBello and Higson initiate a study along this line. They consider the setting of noncompact semisimple Lie groups $G$ of \emph{real rank 1} with a maximal compact subgroup $K$, and study the class of \emph{properly-supported $G$-equivariant Hörmander pseudodifferential operators} on the symmetric space $G/K$ (and even on homogeneous vector bundles over $G/K$). They show that every $G$-equivariant Hörmander PSDO has a principal symbol that is a homogeneous function on the tempered dual of $G$, and they characterize these operators, in $C^*$-algebraic terms, through this principal symbol, for groups of real rank 1; see \cite[Theorem 6.5.2]{DebelloHigson}. 

Following them, an obvious question is whether their characterization can be lifted beyond the principal symbol, and specifically, is there a notion of a \emph{representation-theoretic complete symbol} for the class of $G$-equivariant Hörmander PSDOs on noncompact symmetric spaces (indeed, also for higher real rank)? The author is aware of the same question also being raised elsewhere, for instance \cite{Stanton-Tomas78} and \cite{Duistermaat-Iwasawa-projection}.

In this paper, the question of the existence of a representation-theoretic complete symbol for the class of properly-supported, $G$-equivariant Hörmander PSDOs on a noncompact Riemannian symmetric space is answered in the affirmative. In addition, the same conclusion is reached also for $G$-equivariant Hörmander PSDOs (not necessarily properly-supported) whose kernels satisfy a rapid-decay property away from the diagonal. In order to prove this, the notion of a \emph{Harish-Chandra symbol function} (see Definition \ref{defn-harish-chandra-symbol}) $m\in \smooth(\ab^*)^W$ is introduced, which is a function on the \emph{spherical tempered dual} of $G$ that satisfies typical symbol-type estimates. Then the Plancherel formula for $L^2(G/K)$ is used to define the notion of \emph{$G$-equivariant Harish-Chandra pseudodifferential operators} (see Definition \ref{defn-harish-chandra-PSDO}), and the following theorem is proved. See Theorem \ref{thm-Hormander-implies-HC-upgrade} and Theorem \ref{thm-HC-implies-hormander} for more precise versions.
\begin{theorem*}
    A $G$-equivariant continuous linear operator $$T:\csmooth(G/K)\to \csmooth(G/K)$$ is a $G$-equivariant Harish-Chandra pseudodifferential operator if and only if it is a $G$-equivariant Hörmander pseudodifferential operator whose Schwartz kernel is a Harish-Chandra Schwartz function ``away from the diagonal". 
    \footnote{The theorem proved here is without any vector bundles unlike \cite{DebelloHigson}. The general case with vector bundles is more complicated, and will require a more in-depth study of the tempered dual of $G$.}
\end{theorem*}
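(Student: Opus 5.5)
We establish the two implications separately, working throughout with the convolution kernel. A $G$-equivariant operator $T$ on $G/K$ is right convolution by a $K$-bi-invariant distribution $k_T$ on $G$. Under the Harish-Chandra spherical transform $k\mapsto \hat k(\lambda)=\int_G k(g)\varphi_{-\lambda}(g)\,dg$, this corresponds to multiplication by a $W$-invariant function on $\ab^*$. So $T$ is a Harish-Chandra PSDO exactly when $\hat k_T=m$ for some Harish-Chandra symbol $m\in\smooth(\ab^*)^W$, and the theorem becomes a statement about which $K$-bi-invariant distributions have symbol-class spherical transforms.

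\textbf{(Harish-Chandra $\Rightarrow$ Hörmander).} Fix $m$ of order $d$ and a cutoff $\chi\in\csmooth(G)$, $K$-bi-invariant, equal to $1$ near $K$. Split $k_T=\chi k_T+(1-\chi)k_T$.
\begin{itemize}
\item The far part: use the inverse spherical transform $k_T(g)=\int_{\ab^*} m(\lambda)\varphi_\lambda(g)|\cfunc(\lambda)|^{-2}\,d\lambda$. Integrate by parts in $\lambda$ against the Harish-Chandra expansion $\varphi_\lambda(a)=\sum_{w}\cfunc(w\lambda)e^{(iw\lambda-\rho)\log a}\Phi_{w\lambda}(a)$, valid away from the walls. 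Each integration by parts gains a factor $|\log a|^{-1}$ and, by the symbol estimates, costs nothing worse than $|\lambda|^{d-1}$. The $\lambda$-derivatives of $\cfunc$ and $\Phi$ satisfy symbol-type bounds. Together these show that $(1-\chi)k_T$ and its derivatives decay like $\varphi_0(g)(1+\sigma(g))^{-N}$ for every $N$, which is the Harish-Chandra Schwartz condition away from the diagonal. Near the walls but off $K$, I would instead use Trombi–Varadarajan-type uniform estimates, or the Anker characterization of $\HCschwartz$ via the spherical transform.
\item The near part: show that $\chi k_T$ is a Hörmander conormal distribution at $eK$. Transfer to $\mathfrak p\cong G/K$ via $\exp$. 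The Harish-Chandra integral $\varphi_\lambda(\exp X)=\int_K e^{(i\lambda-\rho)\HIwasawa(\exp X k)}dk$, combined with stationary phase in $K$ (Duistermaat–Kolk–Varadarajan), shows $\varphi_\lambda(\exp X)$ is locally a Fourier integral in $X$ with symbol-class amplitude. Hence $\chi k_T$ is the Fourier transform of a symbol in $S^d(\mathfrak p^*)$ up to smoothing error, so $T$ is a Hörmander PSDO of order $d$.
\end{itemize}

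\textbf{(Hörmander $\Rightarrow$ Harish-Chandra).} Split $k_T=\chi k_T+(1-\chi)k_T$ in the same way.
\begin{itemize}
\item The far part: by hypothesis $(1-\chi)k_T\in\HCschwartz(\GmodmodK)$. By the Trombi–Varadarajan/Anker Paley–Wiener theorem, its spherical transform is Schwartz on $\ab^*$, hence a symbol of order $-\infty$.
\item The near part: $\chi k_T$ is compactly supported and conormal to $K$. Average its local Euclidean symbol over $K$ to obtain a $K$-invariant symbol $a\in S^d(\mathfrak p^*)$. Then compute $\widehat{\chi k_T}(\lambda)=\int_{\mathfrak p}\chi k_T(\exp X)\varphi_{-\lambda}(\exp X)J(X)\,dX$.
\end{itemize}

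Substituting the oscillatory representation of $\varphi_{-\lambda}$ near the identity, $\varphi_{-\lambda}(\exp X)=\int_{K}e^{-i\langle \mathrm{Ad}(k)\lambda,X\rangle}b(X,k,\lambda)\,dk$ with $b$ a symbol in $\lambda$ of order $0$, turns $\widehat{\chi k_T}$ into an oscillatory integral pairing $a$ with that phase. Stationary phase in $X$ then yields an asymptotic expansion in symbols of decreasing order, with leading term $\int_K a(\mathrm{Ad}(k)\lambda)\,dk$. Each $\lambda$-derivative lowers the order by one, which gives the estimates of Definition~\ref{defn-harish-chandra-symbol}. $W$-invariance is automatic from the properties of spherical functions.

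\textbf{Main obstacle.} I expect the hardest step to be the uniform symbol-type control of $\varphi_\lambda$ near the identity: we need a Fourier-integral representation with amplitudes satisfying $\partial_\lambda$-estimates uniformly as $\lambda$ approaches the walls of $\ab^*$, where $\cfunc(\lambda)$ and the expansion degenerate. I would first try the Duistermaat–Kolk–Varadarajan uniform asymptotics. Failing that, I would prove the far-part decay by using the Abel transform to reduce to Euclidean Fourier analysis on $\ab$: the Abel transform intertwines the spherical transform with the Euclidean Fourier transform and preserves both conormality and Schwartz-class behavior.
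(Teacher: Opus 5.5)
\textbf{Verdict: the proposal has a genuine gap.} Your near/far splitting is sensible. Your far part in the Hörmander $\Rightarrow$ Harish-Chandra direction is exactly the paper's argument (Theorem~\ref{thm-Hormander-implies-HC-upgrade} via Theorem~\ref{thm-Schwartz-algebra-characterization}). Everything else rests on uniform control of $\varphi_\lambda$ that you assert but do not establish, and that the tools you cite do not supply.

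Both near-part arguments need $\varphi_{-\lambda}(\exp X)=\int_K e^{-i\langle\mathrm{Ad}(k)\lambda,X\rangle}b(X,k,\lambda)\,dk$ with $b\in S^0$ in $\lambda$, uniformly for $X$ near $0$. You flag this yourself as the main obstacle, but nothing in your sketch produces such a $b$.
\begin{itemize}
\item Harish-Chandra's formula has the nonlinear phase $\lambda(\HIwasawa(\exp(X)k))$, which differs from the linear one by $\lambda$ applied to an $O(|X|^2)$ term. Moving that difference into the amplitude gives a factor whose $\lambda$-derivatives bring down $|X|^2$ rather than $\langle\lambda\rangle^{-1}$. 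So it is not $S^0$ once $|\lambda||X|^2\gtrsim 1$.
\item The Duistermaat--Kolk--Varadarajan asymptotics do not help. The phase vanishes identically at $X=0$ and its critical sets jump at singular $X$. Stationary phase in $\lambda$ is therefore non-uniform as $X\to 0$ and says nothing in the regime $|\lambda||X|\sim 1$, which is exactly what conormality at $eK$ is about.
\item In rank $\geq 2$ the Harish-Chandra expansion controls the far part only away from the walls of $\ab^+$. Near the walls you point only to unspecified uniform estimates.
\end{itemize}

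Your fallback does not close the gap either. Saying the Abel transform ``preserves conormality'' is circular: for $W$-invariant $\ell$ with $\F\ell$ a symbol, conormality of $\A^{-1}\ell$ at $eK$ \emph{is} the near-diagonal content of the theorem. Separately, the isomorphism $\A:\HCschwartz(\GmodmodK)\cong\schwartz(\ab)^W$ does not localize by itself, because $\A^{-1}$ does not commute with cutoffs.

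The missing idea is that no fine structure of $\varphi_\lambda$ is needed.

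For Harish-Chandra $\Rightarrow$ Hörmander, the paper cuts $m$ dyadically on $\ab^*$. For each fixed $H_0$ it uses support preservation of the Abel transform (Corollary~\ref{cor-abel-support-invariance}) to replace $\ell_j=\A k_j$ by $\Omega^{H_0}\ell_j$ without changing the value at $e^{H_0}$. After that it needs only $|\varphi_\lambda|\le\varphi_0$, $|\cfunc(\lambda)|^{-2}\lesssim\langle\lambda\rangle^{n-a}$, and Euclidean Plancherel on $\ab$. Summing over $j$ gives $|k(D_1;z;D_2)|\lesssim\varphi_0(z)|z|^{-n-r-d_1-d_2-N}$ near and far at once, with no wall issues. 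This works directly for $r>-a$, and via the parametrix family of Lemma~\ref{lem-Laplacian-trick-upgraded} for $r\le -a$. Conormality then follows from the Euclidean kernel-to-symbol converse, Theorem~\ref{thm-kernel-PSDO-equivalence-Euclidean}, for $r<0$, and from composing with powers of $\Delta$ for $r\ge 0$.

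For Hörmander $\Rightarrow$ Harish-Chandra, the paper writes $m(\lambda)=\int k\,\varphi_\lambda$ and cuts $k$ into dyadic shells around $eK$. It then uses only three facts:
\begin{itemize}
\item $\Delta\varphi_\lambda=-(|\lambda|^2+|\rho|^2)\varphi_\lambda$, moving $\Delta^N$ onto the shells, with Green's formula for odd orders;
\item $|\varphi_\lambda|\le\varphi_0$;
\item $|\partial_\lambda^\alpha\varphi_\lambda(x)|\lesssim|x|^{|\alpha|}\varphi_0(x)$.
\end{itemize}
Splitting the dyadic sum at $2^j\sim\langle\lambda\rangle$ then gives the symbol estimates.
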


In essence, the Harish-Chandra symbol function (see Definition \ref{defn-harish-chandra-symbol}) of a $G$-equivariant pseudodifferential operator \emph{serves} as a representation-theoretic complete symbol.  Proving this, however, involves an analysis of the Schwartz kernels of $G$-equivariant Harish-Chandra PSDOs, which is heavily inspired by the treatment of kernels of Euclidean pseudodifferential operators in \cite[Chapter VI.4]{Stein93-Real-Variable-Methods}. A brief sketch of this principle will be provided in Section \ref{subsec-kernel-criteria-euclidean}. In particular, the following theorem about kernels of $G$-equivariant Harish-Chandra pseudodifferential operators is proved here. See Theorem \ref{thm-kernel-estimates-G/K} for a more precise statement. 

\begin{theorem*}
    Let $m(\lambda)$ be a Harish-Chandra symbol function of order $r$, and let $T_m$ be the associated $G$-equivariant Harish-Chandra pseudodifferential operator. The kernel $k(\cdot)$ in the expression $$(T_m f)(x)=\int_{G/K} k(y^{-1}x)\,f(y)\,dy$$ is a smooth function on $G/K$ away from $eK$, and satisfies the estimates \begin{equation*}
        |k(D_1;z;D_2)|\lesssim_{N,D_1,D_2}\, \varphi_0(z)\cdot |z|^{-n-r-d_1-d_2-N}
    \end{equation*} for all $N\geq 0$ such that $n+r+d_1+d_2+N>0$, where $n=\dim(G/K)$ and $D_i$'s are invariant differential operators of order $d_i$.
\end{theorem*}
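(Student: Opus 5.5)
We prove the kernel estimate by a Littlewood--Paley decomposition of the symbol, transported to $G/K$ through the inverse spherical transform, following the Euclidean scheme of Stein sketched in Section \ref{subsec-kernel-criteria-euclidean}. By the Plancherel formula for $L^2(G/K)$, the kernel is the $K$-biinvariant function
\begin{equation*}
k(z)=\int_{\ab^*} m(\lambda)\,\varphi_\lambda(z)\,|\cfunc(\lambda)|^{-2}\,d\lambda ,
\end{equation*}
understood as a distribution. An invariant differential operator $D$ acts on $\varphi_\lambda$ by multiplication by a $W$-invariant polynomial $\gamma_D(i\lambda)$ of degree $d$. Hence $k(D_1;z;D_2)$ is the kernel attached to the symbol $\gamma_{D_1}\gamma_{D_2}\,m$, which has order $r+d_1+d_2$. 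It therefore suffices to treat $D_1=D_2=1$ with $r$ arbitrary.

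Choose a $W$-invariant dyadic partition of unity $1=\psi_0(\lambda)+\sum_{j\ge1}\psi(2^{-j}\lambda)$ and set $m_j=m\,\psi(2^{-j}\cdot)$. Let $k_j$ be the corresponding kernel pieces. The piece $k_0$ comes from a compactly supported smooth symbol, so it is a spherical Harish-Chandra Schwartz function. Its bound by $\varphi_0(z)(1+|z|)^{-N}$ for every $N$ follows from the Paley--Wiener/Schwartz theory for the spherical transform (Trombi--Varadarajan, Anker), which we take as known. For each $j\ge1$ we prove two estimates:
\begin{equation*}
|k_j(z)|\lesssim \varphi_0(z)\,2^{j(n+r)}\quad\text{and}\quad |k_j(z)|\lesssim_M \varphi_0(z)\,2^{j(n+r-M)}|z|^{-M}.
\end{equation*}
For $|z|\ge1$, the factor $|z|^{-M}$ is replaced by $\varphi_0(z)(1+|z|)^{-M}$. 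Summing over $j$ and splitting at $2^j\sim|z|^{-1}$ gives $|z|^{-n-r-N}$ near $eK$, provided $n+r+N>0$. Away from $eK$ the sum converges and decays faster than any power, times $\varphi_0$.

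The first estimate follows from $|\varphi_\lambda(z)|\le\varphi_0(z)$ together with $|\cfunc(\lambda)|^{-2}\lesssim(1+|\lambda|)^{n-\ell}$, where $\ell=\dim\ab$. Integrating over the shell $|\lambda|\sim2^j$ then gives $2^{j(n+r)}$. The second estimate is the analogue of integrating by parts against $e^{ix\xi}$. There are two regimes.

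For $z$ in a fixed small neighbourhood of $eK$, we use an expansion of $\varphi_\lambda$, such as the Harish-Chandra integral formula, a Bessel-type (Stanton--Tomas) expansion, or $\varphi_\lambda(\exp H)=\int_K e^{(i\lambda-\rho)(\HIwasawa(\exp H\,k))}dk$. Writing the phase as $\lambda(\HIwasawa(\exp H k))$, we integrate by parts in $\lambda$ using the operator $L=\langle \HIwasawa,\nabla_\lambda\rangle/(i|\HIwasawa|^2)$. Each application gains $2^{-j}|H|^{-1}$, because symbol estimates make every $\lambda$-derivative of $m_j|\cfunc|^{-2}$ cost a factor $2^{-j}$; for $|\cfunc|^{-2}$ itself this follows from its Gamma-function product formula. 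The weak point is that $\HIwasawa(\exp Hk)$ can vanish for some $k$. There we need the stationary-phase geometry of the Iwasawa projection (Duistermaat--Kolk--Varadarajan), or we use the convexity/Bessel expansion, which gives the uniform $|H|^{-M}$ decay.

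For $|z|\ge1$ we use the Harish-Chandra expansion $\varphi_\lambda=\sum_{w}\cfunc(w\lambda)\Phi_{w\lambda}$. We shift the contour into a tube, which is possible because $m$ extends holomorphically in the sense required by Definition \ref{defn-harish-chandra-symbol}, or else use smoothness and integrate by parts against $e^{i\lambda(H)}$. This produces $\varphi_0(z)(1+|z|)^{-M}$. The factor $\varphi_0$ comes from $e^{-\rho(H)}$ and the polynomial factors, and the cancellation of $\cfunc(w\lambda)$ against $|\cfunc|^{-2}$ removes the singularities on the walls.

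I expect the main obstacle to be the uniform second estimate near $eK$: establishing, with constants independent of $j$, the gain of $2^{-jM}|H|^{-M}$ despite the degenerate critical points of the Iwasawa phase. The first thing I would try is the local Bessel-function expansion of $\varphi_\lambda$ near the origin, which reduces this step to the Euclidean radial computation.
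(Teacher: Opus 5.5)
There is a genuine gap: the estimate that carries the whole argument is never proved, and the method you propose for it cannot give it.

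Your skeleton is fine: dyadic pieces $k_j$, the trivial bound $|k_j(z)|\lesssim\varphi_0(z)2^{j(n+r)}$, a decay bound $|k_j(z)|\lesssim_M\varphi_0(z)2^{j(n+r-M)}|z|^{-M}$ for every $M$, and summation split at $2^j\sim|z|^{-1}$. The decay bound is even true. The problem is how you propose to get it. Inserting $\varphi_\lambda(e^H)=\int_K e^{(i\lambda-\rho)(\HIwasawa(e^Hk))}\,dk$ gives $k_j(e^H)=\int_{\ab}F_j\,d\mu_H$. Here $F_j$ is the Euclidean inverse Fourier transform of $m_j|\cfunc|^{-2}$, and $\mu_H$ is the $e^{-\rho}$-weighted push-forward of $dk$ under $k\mapsto\HIwasawa(e^Hk)$. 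This measure lives on the convex hull of $W\cdot H$, which contains $0$, so $\HIwasawa(e^Hk)$ vanishes for some $k$. Integrating by parts in $\lambda$ for fixed $k$ therefore gains $(2^j|\HIwasawa(e^Hk)|)^{-1}$ per step, not $(2^j|H|)^{-1}$. Since $|F_j|$ can be of size $2^{j(n+r)}$ on $\{|X|\lesssim 2^{-j}\}$ (e.g.\ for $m=\langle\lambda\rangle^r$), any bound with absolute values inside the $k$-integral gains at best about $a=\dim\ab$ powers of $(2^j|H|)^{-1}$.

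Arbitrary $M$ needs cancellation in $k$, i.e.\ regularity of $\mu_H$ near $0$ uniform in $H$, and that fails already for $SL(3,\C)/SU(3)$. There $\mu_H$ is a multiple of a Duistermaat--Heckman measure whose walls pass through $0$ when $H$ is a multiple of the highest root. The estimate survives only because $|\cfunc|^{-2}$ vanishes on the matching root hyperplanes. You flag this as the weak point, but naming DKV stationary phase or a Bessel expansion is a pointer, not an argument. The uniform Bessel-type expansion of Stanton--Tomas is a rank-one result.

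The regime $|z|\ge 1$ is not handled either. Definition \ref{defn-harish-chandra-symbol} gives no holomorphic extension of $m$, so no contour shift is available. The Harish-Chandra series for $\Phi_\lambda(e^H)$ is also not uniform as $H$ approaches a wall of $\ab^+$.

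The missing idea is the one the paper uses (Anker's method): do not analyse $\varphi_\lambda$ finely; work on the Abel side.
\begin{itemize}
\item The function $\ell_j=\A k_j=\F^{-1}(\mathcal{I}m_j)$ involves no $\cfunc$-function and satisfies $|\ell_j(H)|\lesssim_M 2^{j(a+r-M)}|H|^{-M}$ (Lemma \ref{lem-l-j-bound}).
\item By support preservation (Corollary \ref{cor-abel-support-invariance}), $k_j(e^{H_0})=\A^{-1}(\Omega^{H_0}\ell_j)(e^{H_0})$, with $\Omega^{H_0}=0$ on $\{|H|<|H_0|/2\}$. So the neighbourhood of $0\in\ab$ that defeats your argument is discarded before returning to $G/K$.
\item After that only $|\varphi_\lambda|\le\varphi_0$ is needed: $|k_j(e^{H_0})|\lesssim\varphi_0(e^{H_0})\int|m_j^{H_0}|\,|\cfunc|^{-2}\,d\lambda$.
\item This integral is bounded by Cauchy--Schwarz and Plancherel on $\ab$ for $|\lambda|\le 2^{j+2}$, and by the decay of $\check\omega^{H_0}$ for $|\lambda|>2^{j+2}$.
\end{itemize}
The resulting bound is uniform in $H_0$, including near walls and for $|H_0|\ge1$. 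With Lemma \ref{lem-spherical-functions}(ii) it also covers general $D_1,D_2\in U(\g)$. Your reduction via $\gamma_D(i\lambda)$ only covers operators that have $\varphi_\lambda$ as an eigenfunction. The cost of the paper's route is that the $j$-summation needs $r>-a$, and $r\le -a$ is handled separately with the operators $E_s$ of Lemma \ref{lem-Laplacian-trick-upgraded}.
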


Although the terminology of ``$G$-equivariant Harish-Chandra pseudodifferential operators'' is used for the first time in this paper, these are essentially multiplier operators on noncompact symmetric spaces, and multiplier operators with symbol-like properties have been extensively studied in regards to $L^p$-boundedness theory. These appear in the works of Clerc and Stein in \cite{ClercStein74} for complex semisimple Lie groups, Stanton and Tomas in \cite{Stanton-Tomas78} for groups of real rank 1, and Anker in \cite{Anker90} for all real semisimple Lie groups. Anker uses a result of Harish-Chandra involving the Abel transform to convert questions involving spherical Fourier analysis on the symmetric space to questions on Euclidean Fourier analysis on a Euclidean space (see the diagram in \eqref{diagram-Abel-triangle}). This is critical because oscillatory integrals on symmetric spaces require new estimation techniques, and Anker's approach (see Construction \ref{Chopping - 2}) does the trick.\\

\paragraph{\textbf{Acknowledgments}} I thank my advisor, Nigel Higson, for suggesting this project, and for his constant guidance, help and regular doses of motivation throughout this investigation. I would also like to thank Jean-Philippe Anker for suggesting techniques that have been crucial in this work, and Peter Hintz and Anna Mazzucato for helpful discussions. The author acknowledges the support of the Institut Henri Poincaré (UAR 839 CNRS-Sorbonne Université) and LabEx CARMIN (ANR-10-LABX-59-01).  The author was also partially supported  by the NSF grant DMS-1952669.

\section{Analysis on Noncompact Symmetric Spaces}

The purpose of this section is to provide a brief summary of the necessary background on the structure theory of noncompact semisimple Lie groups, and harmonic analysis on the corresponding noncompact symmetric spaces. The standard references for most of the material include \cite{HelgasonGGA}, \cite{HelgasonGASS} and \cite{Knapp-example}.

\subsection{Structure theory of $G$}\label{sec-structure-of-G}
Let $G$ be a noncompact connected real semisimple Lie group, with finite center, and $K$ be a maximal compact subgroup. The corresponding homogeneous space $G/K$ (the set of left cosets of $K$ in $G$) is a Riemannian symmetric space of noncompact type. Let $\g$ be the Lie algebra of $G$, with Cartan decomposition $\g=\kompact\oplus\p$, where $\kompact=\operatorname{Lie}(K)$. There is a natural identification between the vector space $\p$ and the tangent space of $G/K$ at the base point $eK$. The Killing form on $\g$ defined by $B(X,Y)=\Tr(\ad_X \ad_Y)$ for $X,Y\in \g$ restricts to a $K$-invariant inner product on $\p$, and hence, induces a $G$-invariant Riemannian metric on $G/K$. Moreover, the exponential map $\exp:\g\rightarrow G$ induces a diffeomorphism between $\p$ and $G/K$. Inheriting the ``distance from origin'' on $\p$, one defines the \emph{distance from $eK$} on $G/K$ by setting $|\exp(X)|_{G/K}\coloneqq |X|_{\p}$. The subscripts will be ignored in general, since it will be clear from the context which one is meant.

Let us fix a maximal abelian subspace $\ab\subset \p$. Any two maximal abelian subspaces are conjugate by an element of $K$, and their dimension is called the \emph{real rank} of $G$, and will be denoted as $a=\dim \ab$. Diagonalizing the family $\{\ad_H:\g\rightarrow\g\,:\,H\in \ab\}$ gives rise to a decomposition of $\g$ into simultaneous eigenspaces $\g_\alpha$, called the \emph{restricted root space decomposition} of $(\g,\ab)$, with eigenvalues $\alpha\in \Sigma(\g,\ab)\eqqcolon \Sigma$ called the \emph{restricted roots} i.e., $$\g=\m\oplus \ab\oplus \bigoplus_{\alpha\neq 0}\g_\alpha$$ where $\m$ is the centralizer of $\ab$ in $\kompact$. We also introduce the subgroup $M=Z_K(\ab)$, the centralizer of $\ab$ in $K$. Now, let us make a choice of positive roots $\Sigma^+$, and let $\Sigma_0^+$ be its associated set of indivisible roots. The positive roots determine a positive Weyl chamber $\ab^+\coloneqq \{H\in \ab\,:\,\alpha(H)>0 \text{ for all }\alpha\in \Sigma^+\}$, and let $\overline{\ab^+}$ be its closure in $\ab$. Define the Weyl group as $W\coloneqq N_K(\ab)/Z_K(\ab)$, where $`N$' refers to the normalizer.

Let $\n=\bigoplus_{\alpha\in \Sigma^+}\g_\alpha$ and let $N$ be the analytic subgroup of $G$ corresponding to $\n$, and let $A=\exp \ab$. Then $G$ admits an \emph{Iwasawa decomposition} $G=KAN$. The Iwasawa decomposition gives rise to ``projection'' maps \begin{equation}\label{eqn-Iwasawa-maps}
    \kIwasawa: G\rightarrow K, \quad \HIwasawa: G\rightarrow \ab, \quad \nIwasawa: G\rightarrow N
\end{equation} such that $g=\kIwasawa(g)\cdot \exp(\HIwasawa(g))\cdot \nIwasawa(g)$ for all $g\in G$. The group $G$ also admits another (non-unique) decomposition $G=KAK$, called the \emph{KAK decomposition}, which is particularly useful when integrating bi-$K$-invariant functions on $G$; the corresponding integration formulae will be introduced in \eqref{eqn-jacobian-KAK}.

\subsection{Spherical analysis on $G/K$ and the Plancherel formula}

The space $G/K$ admits a natural $G$-invariant measure on $G/K$ induced from the bi-invariant Haar measure $dg$ on $G$. Hence, $L^2(G/K)$ with the left-regular representation is a unitary representation of $G$. It may also be viewed as the closed subspace of $L^2(G)$ where $K$ acts trivially on the right. And, the irreducible tempered unitary representations of $G$ appearing in the Plancherel decomposition of $L^2(G/K)$ are precisely those that admit a nonzero $K$-fixed vector. 

These representations constitute the \emph{spherical tempered dual} of $G$, and are parameterized by $\lambda\in \ab^*$. In particular, each element in the spherical principal series is induced from the minimal parabolic subgroup $P=MAN$ of $G$ via parabolic induction, defined by \begin{equation}\label{eqn-spherical-representation}
\pi_\lambda\coloneqq \operatorname{Ind}_{MAN}^G(1\otimes e^{i\lambda+\rho}\otimes 1)\,,
\end{equation} where $\rho=\frac{1}{2}\sum_{\alpha>0}(\dim \g_\alpha)\cdot \alpha \in \ab^*$. The spherical representations $\pi_\lambda$ are unitary and irreducible for all $\lambda\in \ab^*$, and $\pi_{\lambda_1}$ is unitarily equivalent to $\pi_{\lambda_2}$ if and only if $\lambda_1$ and $\lambda_2$ are in the same $W$-orbit (the $W$-action on $\ab^*$ is induced from the $W$-action on $\ab$ through the Killing form). And, the Plancherel formula for $L^2(G/K)$ is given by \begin{equation}\label{eqn-Plancherel-G/K}
    f(x)=\frac{1}{|W|}\int_{\ab^*} \Tr(\pi_\lambda(x^{-1})\pi_\lambda(f))\,|\cfunc(\lambda)|^{-2}\,d\lambda \quad \text{ for }x\in G/K
\end{equation}
where $|\cfunc(\lambda)|^{-2}\,d\lambda$ is the Plancherel measure of $G$ on the spherical tempered dual. The function $\cfunc$ is the famous \emph{Harish-Chandra $\cfunc$-function}, which satisfies the estimates \begin{equation}\label{eqn-c-function-estimate}
|\cfunc(\lambda)|^{-2}\lesssim \langle\lambda\rangle^{n-a} \,,
\end{equation} where $n=\dim G/K$, $a=\dim \ab$, and $\langle\lambda\rangle=(1+|\lambda|^2)^{1/2}$.

\subsection{The Helgason-Fourier transform}

Through the compact picture (see \cite[Chapter VII.1]{Knapp-example}), $\pi_\lambda$ can be realized on the Hilbert space $L^2(K/M)$ with the $G$-action \begin{equation}
    (\pi_\lambda(g)f)(k)=e^{-(i\lambda+\rho)(\HIwasawa(g^{-1}k))}\cdot f(\kIwasawa(g^{-1}k))
\end{equation} where $\kIwasawa$ and $\HIwasawa$ are the maps arising from the Iwasawa decomposition in \eqref{eqn-Iwasawa-maps}. Moreover, the space of $K$-fixed vectors in $\pi_\lambda$ is $1$-dimensional, and the trace of the operator $\pi_\lambda(f)$ restricted to the $K$-fixed vectors is given by \begin{equation}
    \langle \pi_\lambda(f)1, 1 \rangle_{L^2(K/M)}=\int_{K/M}\int_{G/K}\,f(x)\,e^{-(i\lambda+\rho)(\HIwasawa(x^{-1}k))}\,dk\,.
\end{equation}This motivates the following definition. 

\begin{definition}[Helgason-Fourier transform]\label{def-helgason-fourier-transform}
    The \emph{Helgason-Fourier transform} of a function $f\in \csmooth(G/K)$ is defined as $$\widetilde{f}(\lambda,k)\coloneqq\int_{G/K}f(x)\,e_{\lambda,k}(x)\,dx=\int_{G} f(g)\,e_{\lambda,k}(g)\,dg $$ for all $(\lambda,k)\in \ab^*\times K/M$, where $e_{\lambda,k}(x)=e^{-(i\lambda+\rho)(\HIwasawa(x^{-1}k))}$. \footnote{The notation $e_{\lambda,k}$ is inspired by \cite{Guenda-24-Paley-Wiener}.} 
\end{definition}

The Helgason-Fourier transform will serve as the ``natural lens" through which we should look at $G$-equivariant operators on $G/K$. Moreover, the Plancherel formula \eqref{eqn-Plancherel-G/K} can be re-stated in terms of the Helgason-Fourier transform as \begin{align}\label{eqn-Helgason-Fourier-inversion}
    \begin{split}
    f(x)=\frac{1}{|W|}\int_{\ab^*}\int_{K/M}\,\widetilde{f}(\lambda,k)\,e_{-\lambda,k}(x)\,dk\,|\cfunc(\lambda)|^{-2}\,d\lambda\,.
    \end{split}
\end{align} This will be used to define the notion of $G$-equivariant Harish-Chandra pseudodifferential operators in Definition \ref{defn-harish-chandra-PSDO}.

The Helgason-Fourier transform has a simpler description when restricted to bi-$K$-invariant functions on $G$, where it reduces to the \emph{spherical transform} (also called the \emph{Harish-Chandra transform}), which for $f\in \csmooth(\GmodmodK)$ is defined as \begin{equation}\label{eqn-spherical-transform-defn}
\HS(f)(\lambda)\coloneqq\int_{G/K}f(x)\,\varphi_{\lambda}(x)\,dx = \int_G f(g)\,\varphi_{\lambda}(g)\,dg
\end{equation} where $\varphi_\lambda(x)=\int_K e_{\lambda,k}(x)\,dk$ is the \emph{spherical function} associated to $\lambda\in \ab^*$. By definition, the spherical functions are bi-$K$-invariant on $G$, and they satisfy the symmetry properties 
\begin{equation}\label{eqn-spherical-function-symmetry}
\varphi_\lambda(x)=\varphi_{-\lambda}(x^{-1}), \quad \text{and} \quad \varphi_{w.\lambda}=\varphi_\lambda \quad \text{for all } w\in W\,.
\end{equation} Analytic properties of the spherical functions will be quite important in this paper, and the following lemma lists some of them.  
\begin{lemma}[{\cite[Chapter III, Lemma 1.18]{HelgasonGASS}}]\label{lem-spherical-functions} For $\lambda\in \ab^*$, let $\varphi_\lambda$ be the associated spherical function.
    \begin{enumerate}
        \item The spherical functions are left $K$-invariant joint eigenfunctions of the algebra of $G$-invariant differential operators on $G/K$. Moreover, $$\Delta \varphi_\lambda=-(|\lambda|^2+|\rho|^2)\cdot\varphi_\lambda$$ for the Laplace-Beltrami operator $\Delta$ on $G/K$.
        \item For $D_1,D_2\in U(\g)$,  $|\varphi_\lambda(D_1;\,x\,;D_2)|\leq C_{D_1,D_2}\cdot \langle \lambda\rangle^{\deg D_1+\deg D_2}\cdot\varphi_0(x)$ for some constant $C_{D_1,D_2}>0$. \footnote{$U(\g)$ is the universal enveloping algebra of $\g_\C\cong \g\otimes_\R \C$. $f(D_1;x;D_2)$ is Harish-Chandra's notation for $D_1,D_2\in U(\g)$ acting on $f$ as right-invariant and left-invariant differential operators respectively, and their actions commute. See \cite[Chapter III]{Knapp-example}.}
        \item For any multi-index $\alpha$, there exists a constant $C_\alpha>0$ such that $|\partial_\lambda^\alpha \varphi_\lambda(x)|\leq C_\alpha \cdot |x|^{|\alpha|}\cdot \varphi_0(x)$ for all $\lambda\in \ab^*$ and $x\in G/K$. 
    \end{enumerate}
\end{lemma}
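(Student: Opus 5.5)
All three parts will be derived from the integral representation $\varphi_\lambda(x)=\int_K e^{-(i\lambda+\rho)(\HIwasawa(x^{-1}k))}\,dk$, together with the fact that the imaginary exponent $e^{-i\lambda(\cdot)}$ has modulus one. Setting $\lambda=0$ in the modulus of the integrand gives the key comparison
\[
\int_K \bigl|e_{\lambda,k}(x)\bigr|\,dk=\varphi_0(x),
\]
and every bound is reduced to this.

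For part (1), left $K$-invariance is immediate from the integral over $K$, after the change of variables $k\mapsto k_0k$ together with $\HIwasawa(k_0g)=\HIwasawa(g)$. For the eigenfunction property, I would show that each plane wave $x\mapsto e_{\lambda,k}(x)$ is a joint eigenfunction of every $G$-invariant differential operator. Write $e_{\lambda,k}(gK)$ as a function of $g$ that is left $N$-invariant after translating by $k$. Harish-Chandra's homomorphism then identifies the eigenvalue of $D$ with the evaluation at $i\lambda$ of the image $\gamma(D)\in S(\ab)^W$, and this eigenvalue is independent of $k$. Integrating in $k$ preserves the eigenvalue. For $\Delta$, $\gamma(\Delta)$ is the $\rho$-shifted Casimir on $\ab$, which gives $-(|\lambda|^2+|\rho|^2)$. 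Equivalently, a direct computation in horocyclic coordinates $x=n\exp(H)K$ gives $\Delta e^{(i\lambda-\rho)(H)}=(\langle i\lambda-\rho,i\lambda-\rho\rangle-\ldots)$ after the radial part of $\Delta$ on $NA$ is written as $L_\ab-2\rho$-drift.

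For part (2), I would differentiate under the integral. A left-invariant $D_2\in\g$ acting on $g\mapsto e^{-(i\lambda+\rho)(\HIwasawa(g^{-1}k))}$ produces $-(i\lambda+\rho)$ applied to $d\HIwasawa$ along a bounded vector field (the $\operatorname{Ad}(\kIwasawa)$-projection to $\ab$). This is a factor of size $O(\langle\lambda\rangle)$ times a bounded smooth function of $(g,k)$, multiplied by the original integrand. By induction, a monomial of degree $d$ produces a polynomial in $\lambda$ of degree at most $d$ with bounded coefficients. The right-invariant $D_1$ is handled by the symmetric argument, using $\varphi_\lambda(x)=\varphi_{-\lambda}(x^{-1})$. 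Bounding the coefficients by a constant and pulling out the modulus gives $C\langle\lambda\rangle^{d_1+d_2}\varphi_0(x)$.

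The main obstacle I foresee in this step is showing that the derivatives of $\HIwasawa(g^{-1}k)$ are uniformly bounded in $g$, which is needed for the constant to be independent of $x$. I would get this from left-$K$-equivariance and right-$AN$-equivariance of the Iwasawa map. These express $X\HIwasawa$ via $\operatorname{Ad}(\kIwasawa(\cdot))X$, which ranges in a compact set.

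For part (3), $\partial_\lambda^\alpha$ applied to the integrand brings down $\prod(-i\HIwasawa(x^{-1}k))$ factors. I would use the standard estimate $|\HIwasawa(x^{-1}k)|\le |x|$, which follows from Kostant's convexity theorem, or more elementarily from the fact that the Iwasawa $A$-projection is norm-nonincreasing relative to the $KAK$ radial part. This gives $|\partial^\alpha_\lambda\varphi_\lambda(x)|\le |x|^{|\alpha|}\int_K |e_{\lambda,k}(x)|\,dk=|x|^{|\alpha|}\varphi_0(x)$, with $C_\alpha=1$ up to normalization.
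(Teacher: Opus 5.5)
The paper gives no proof of this lemma; it only cites Helgason. Your strategy is the standard one: differentiate $\varphi_\lambda(x)=\int_K e_{\lambda,k}(x)\,dk$ under the integral, dominate the result using $|e_{\lambda,k}(x)|=e^{-\rho(\HIwasawa(x^{-1}k))}$, and use $|\HIwasawa(g)|\le|g|$ for the $\lambda$-derivatives. With it, parts (1) and (3) go through as you describe.

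There is one slip in your optional horocyclic check. For $x=n\exp(H)K$ the plane wave $e_{\lambda,eM}(x)$ equals $e^{(i\lambda+\rho)(H)}$, not $e^{(i\lambda-\rho)(H)}$. With the $-2\rho$ drift, $e^{(i\lambda+\rho)(H)}$ gives $\langle i\lambda+\rho,i\lambda-\rho\rangle=-(|\lambda|^2+|\rho|^2)$, and your exponent does not. Your main argument via the Harish-Chandra homomorphism is unaffected.

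Part (2) is fine when only one of $D_1,D_2$ is present. For the left-invariant $D_2$, the precise form of your induction is the identity $\HIwasawa(yh)=\HIwasawa(y\,\kIwasawa(h))+\HIwasawa(h)$. It shows that every left-translation derivative of $h\mapsto e^{-(i\lambda+\rho)(\HIwasawa(h))}$ is this function times a polynomial in $\lambda$ of degree at most the order, with coefficients continuous on compact $K$. Inversion then handles $D_1$ alone.

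The gap is the two-sided case $|\varphi_\lambda(D_1;x;D_2)|$, which is exactly what Step~1 of the proof of Theorem~\ref{thm-kernel-estimates-G/K} uses. Inversion does not reduce it. The identity $\varphi_\lambda(x)=\varphi_{-\lambda}(x^{-1})$ turns $\varphi_\lambda(D_1;x;D_2)$ into $\varphi_{-\lambda}(D_2';x^{-1};D_1')$, which again has one operator on each side.

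Differentiating the integrand directly in the $D_1$-direction also fails. The map $x\mapsto\exp(tX)x$ replaces $h=x^{-1}k$ by $h\exp(-t\operatorname{Ad}(k^{-1})X)$, a right translation of $h$. Its first-order effect on $\HIwasawa(h)$ is the $\ab$-component of $\operatorname{Ad}(\nIwasawa(h))\operatorname{Ad}(k^{-1})X$. This is unbounded as $\nIwasawa(x^{-1}k)$ ranges over $N$; already in $SL(2,\R)$ it grows linearly in the $N$-coordinate. So no uniform-in-$x$ bound comes out.

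The missing ingredient is the product formula
\[
\varphi_\lambda(y^{-1}x)=\int_K e_{\lambda,k}(x)\,e_{-\lambda,k}(y)\,dk .
\]
It follows from $\varphi_\lambda(y^{-1}x)=\int_K e^{-(i\lambda+\rho)(\HIwasawa(x^{-1}yk))}\,dk$, the identity above, and the change of variables $k\mapsto\kIwasawa(yk)$, whose Jacobian is $e^{-2\rho(\HIwasawa(yk))}$. With this formula, $D_1$ acts only on $y\mapsto e_{-\lambda,k}(y)$ at $y=e$, where it is bounded by $C\langle\lambda\rangle^{d_1}$ uniformly in $k$. Meanwhile $D_2$ acts on $e_{\lambda,k}(x)$ exactly as in your argument, and integrating over $K$ gives $C\langle\lambda\rangle^{d_1+d_2}\varphi_0(x)$.

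Equivalently, you can work with matrix coefficients. Write $\varphi_\lambda(g)=\langle\pi_\lambda(g)1,1\rangle$, so that $\varphi_\lambda(D_1;g;D_2)=\langle\pi_\lambda(g)u,v\rangle$ with $u=\pi_\lambda(D_2)1$ and $v$ built from $D_1$; both are bounded by $C\langle\lambda\rangle^{d_2}$ and $C\langle\lambda\rangle^{d_1}$ on $K/M$. The pointwise identity $|\pi_\lambda(g)u|=\pi_0(g)|u|$ then yields $|\langle\pi_\lambda(g)u,v\rangle|\le\|u\|_\infty\|v\|_\infty\,\varphi_0(g)$.
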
 Similar to \eqref{eqn-Helgason-Fourier-inversion}, the inverse spherical transform is given by \begin{equation}\label{eqn-spherical-transform-inversion}
    f(x)=\frac{1}{|W|}\int_{\ab^*}\HS(f)(\lambda)\,\varphi_{-\lambda}(x)\,|\cfunc(\lambda)|^{-2}\,d\lambda\,.
\end{equation} 
Helgason proved a spherical Paley-Wiener theorem (see \cite[Chapter IV, Theorem 7.1]{HelgasonGGA}), establishing that the spherical transform $\HS$ is a topological isomorphism between $\csmooth(\GmodmodK)$ and $\operatorname{PW}(\ab^*_\C)^W$, the space of $W$-invariant holomorphic functions on $\ab^*_\C$ ($\cong \ab^*\otimes_\R \C$) that satisfy the Euclidean Paley-Wiener conditions (see \cite[Theorem 7.3.1]{Hormander-I}). For more details on the transforms, especially regarding the Helgason-Fourier transform, see \cite[Chapter III]{HelgasonGASS}. 

\subsection{The Abel transform and Euclidean analysis on $\ab$}

The Abel transform will act as the bridge connecting the spherical transform \eqref{eqn-spherical-transform-defn} to the Euclidean Fourier transform. The Abel transform $\A:\csmooth(\GmodmodK)\rightarrow \csmooth(\ab)^W$ is defined by \begin{equation}\label{eqn-Abel-defn}
    \mathcal{A}f(H)=e^{\rho(H)}\int_N f(\exp(H)n)\,dn \quad \text{ for }H\in \ab
\end{equation} for $f\in \csmooth(\GmodmodK)$. The fact that $\A f$ is $W$-invariant is a result of Harish-Chandra (see \cite[Chapter I, Theorem 5.7]{HelgasonGGA}). A direct calculation gives $$\A=\mathcal{F}^{-1}\circ\mathcal{I} \circ \HS$$ where $\mathcal{F}$ is the Euclidean Fourier transform on $\ab$ defined by $$\mathcal{F}f(\lambda)=\int_{\ab} f(H)e^{-i\lambda(H)}\,dH\quad \text{ for }f\in \csmooth(\ab)^W$$ and $\mathcal{I}(\psi)(\lambda)=\psi(-\lambda)$ for $\psi:\ab^*\rightarrow \C$. This can be summarized in the following commutative diagram \footnote{Ideally, this should be a triangle. However, conflicting traditions for sign conventions used in $\F$ and in $\HS$ force us to use the $\mathcal{I}$ map.}, where the left arm represents spherical analysis and the right arm represents Euclidean analysis. \begin{equation}\label{diagram-Abel-triangle}
    \begin{tikzcd}
         \operatorname{PW}(\ab^*_\C)^W \arrow[r,"\mathcal{I}","\cong"'] &  \operatorname{PW}(\ab^*_\C)^W\\ \csmooth(\GmodmodK) \arrow[u, "\HS", "\cong"'] \arrow[r,"\cong","\A"']&  \csmooth(\ab)^W \arrow[u,"\cong", "\mathcal{F}"']  
    \end{tikzcd}
\end{equation} 
The following theorem uses the spherical Paley-Wiener theorem of Helgason along with the classical Paley-Wiener theorem for the Euclidean Fourier transform.

\begin{theorem}[{\cite[Proposition 5]{Anker91}}]\label{thm-abel-support-preserving}
    The Abel transform $$\mathcal{A}:\csmooth(\GmodmodK)\longrightarrow \csmooth(\ab)^W$$ is a topological isomorphism and preserves supports in the following sense: for
    $f\in \csmooth(\GmodmodK)$, $f$ has support contained in the ball $\{x\in G\,:\,|x|\leq R\}$ if and only if $\mathcal{A}f$ has support contained in the ball $\{H\in \ab\,:\,|H|\leq R\}$.
\end{theorem}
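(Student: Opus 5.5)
The plan is to deduce everything from the commutative diagram \eqref{diagram-Abel-triangle}, Helgason's spherical Paley--Wiener theorem, and the classical Euclidean Paley--Wiener theorem, in the form \cite[Theorem 7.3.1]{Hormander-I}. Topologically, the factorization $\A=\mathcal{F}^{-1}\circ\mathcal{I}\circ\HS$ writes $\A$ as a composition of three topological isomorphisms:
\begin{itemize}
\item $\HS:\csmooth(\GmodmodK)\to \operatorname{PW}(\ab^*_\C)^W$ is a topological isomorphism by Helgason's spherical Paley--Wiener theorem;
\item $\mathcal{I}$ is clearly a topological automorphism of $\operatorname{PW}(\ab^*_\C)^W$, since it preserves $W$-invariance because $-1$ commutes with $W$ and it preserves the growth estimates;
\item $\mathcal{F}^{-1}$ restricted to $W$-invariant functions is a topological isomorphism $\operatorname{PW}(\ab^*_\C)^W\to\csmooth(\ab)^W$ by the Euclidean Paley--Wiener theorem, together with the fact that $\mathcal{F}$ intertwines the $W$-actions because $W$ acts orthogonally.
\end{itemize}
Here both Paley--Wiener spaces carry the inductive limit topology over the radius $R$ of the exponential type. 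So the first claim is just bookkeeping, once I check that the two Paley--Wiener theorems are stated with the same space and the same topology.

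For the support statement, I would use the \emph{radius-refined} versions of both Paley--Wiener theorems. Helgason's theorem says that $f\in\csmooth(\GmodmodK)$ is supported in $\{|x|\le R\}$ if and only if $\HS f$ is entire of exponential type $R$:
\begin{equation*}
|\HS f(\zeta)|\le C_N(1+|\zeta|)^{-N}e^{R|\operatorname{Im}\zeta|}\quad\text{for all }N.
\end{equation*}
The Euclidean theorem gives the same equivalence for $g\in\csmooth(\ab)$ with support in $\{|H|\le R\}$. The type-$R$ condition is invariant under $\zeta\mapsto-\zeta$. Chaining the equivalences through the diagram: $\operatorname{supp} f\subset\{|x|\le R\}$ iff $\HS f$ has type $R$ iff $\mathcal{I}\HS f=\mathcal{F}(\A f)$ has type $R$ iff $\operatorname{supp}\A f\subset\{|H|\le R\}$.

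The one point needing care, and the one I expect to be the main obstacle, is that the radius in Helgason's theorem is measured exactly by the Riemannian distance $|x|=|X|_\p$ for $x=\exp(X)K$. The norm on $\ab^*_\C$ used to measure $|\operatorname{Im}\zeta|$ must be dual to the restriction of the Killing form to $\ab$. If the cited formulation of Helgason's theorem is for balls, I would reduce to it via the $KAK$ decomposition: a bi-$K$-invariant function is supported in the geodesic ball of radius $R$ iff its restriction to $A$ is supported in $\{\exp H:|H|\le R\}$, because $|k\exp(H)k'|=|H|$.

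Alternatively, one implication can be checked directly, as a sanity check that does not depend on these conventions. If $\operatorname{supp}f\subset\{|x|\le R\}$ and $f(\exp(H)n)\ne0$, then $|H|\le|\exp(H)n|\le R$, by the standard inequality $|\HIwasawa(g)|\le|g|$ applied to $g=\exp(H)n$. Hence $\operatorname{supp}\A f\subset\{|H|\le R\}$ straight from \eqref{eqn-Abel-defn}. The reverse implication is the nontrivial direction, and there I would rely on the Paley--Wiener chain above.
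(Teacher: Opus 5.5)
Your proposal is correct and follows the route the paper indicates. The paper gives no proof beyond citing Anker and remarking that the result comes from the factorization $\A=\F^{-1}\circ\mathcal{I}\circ\HS$ together with the radius-refined spherical (Helgason) and Euclidean Paley--Wiener theorems, which is exactly your chain; your direct check of the easy inclusion via $|\HIwasawa(g)|\le|g|$ (e.g.\ from Kostant's convexity theorem) is a valid, convention-independent extra but is not needed.
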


The previous theorem can also be stated for compactly supported distributions by duality. Now, the following corollary of this support-preserving property is at the heart of Anker's method in \cite{Anker92_Sharp_Estimates_Laplacian}.

\begin{corollary}\label{cor-abel-support-invariance}
    Let $x\in G/K$, and let $f\in \csmooth(G/K)$. If there exists a function $\Omega_{x}\in \csmooth(\ab)^W$ such that $\Omega_{x}(H)=1$ for all $|H|>\frac{3|x|}{4}$ and $\Omega_{x}(H)=0$ for all $|H|<\frac{|x|}{2}$. Then \begin{equation*}
    f(x)=\A^{-1}(\Omega_{x}\cdot \mathcal{A}f)(x)\,.
    \end{equation*}
\end{corollary}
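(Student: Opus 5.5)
The argument splits $\A f$ into a piece near the origin of $\ab$ and a piece away from it, and uses the support-preserving property of Theorem \ref{thm-abel-support-preserving} to show that the near piece contributes nothing at $x$. Since $\A$ is only defined on $\csmooth(\GmodmodK)$, I read the statement with $f$ bi-$K$-invariant, i.e.\ $f$ a left $K$-invariant function in $\csmooth(G/K)$; this is implicit in writing $\A f$.

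\emph{Step 1: both pieces lie in $\csmooth(\ab)^W$.} Write
\[
\A f=\Omega_x\cdot \A f+(1-\Omega_x)\cdot\A f .
\]
By Theorem \ref{thm-abel-support-preserving}, $\A f\in\csmooth(\ab)^W$. Both $\Omega_x$ and $1-\Omega_x$ are smooth and $W$-invariant, so each product is smooth, compactly supported (because $\A f$ is) and $W$-invariant. Hence both lie in $\csmooth(\ab)^W$. Since $\A$ is a topological isomorphism onto $\csmooth(\ab)^W$, $\A^{-1}$ can be applied to each piece, and by linearity
\[
f=\A^{-1}(\A f)=\A^{-1}(\Omega_x\cdot\A f)+\A^{-1}\big((1-\Omega_x)\cdot\A f\big).
\]

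\emph{Step 2: the near piece vanishes at $x$.} By hypothesis $\Omega_x(H)=1$ whenever $|H|>\tfrac{3|x|}{4}$. So $(1-\Omega_x)\cdot\A f$ is supported in the closed ball $\{|H|\le \tfrac{3|x|}{4}\}$. The "if" direction of the support statement in Theorem \ref{thm-abel-support-preserving} then shows that $g:=\A^{-1}\big((1-\Omega_x)\A f\big)$ is supported in $\{y:|y|\le\tfrac{3|x|}{4}\}$. If $x\neq eK$, then $|x|>\tfrac{3|x|}{4}$, so $g(x)=0$ and $f(x)=\A^{-1}(\Omega_x\cdot\A f)(x)$, as claimed.

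\emph{Step 3: the case $x=eK$.} Here the hypotheses force $\Omega_x=1$ on $\ab\setminus\{0\}$, so by continuity $\Omega_x\equiv1$. The identity then reduces to $f=\A^{-1}\A f$, which is trivial. (The lower condition $\Omega_x=0$ for $|H|<\tfrac{|x|}{2}$ is vacuous in this case and is not used in the argument anyway.)

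There is no real obstacle. The only points needing care are that $\A^{-1}$ is applied only to genuinely $W$-invariant compactly supported smooth functions, which Step 1 ensures, and that the support correspondence in Theorem \ref{thm-abel-support-preserving} is used in the direction from $\ab$ back to $G/K$.
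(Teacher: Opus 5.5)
Your proposal is correct and is exactly the argument the paper intends. The paper gives no separate proof; it only points to the support-preserving property in Theorem~\ref{thm-abel-support-preserving}. You split $\A f=\Omega_x\A f+(1-\Omega_x)\A f$ and observe that $\A^{-1}\big((1-\Omega_x)\A f\big)$ is supported in $\{|y|\le\frac{3|x|}{4}\}$, so it vanishes at $x\neq eK$.

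One small remark on the statement itself. As literally written, a function $\Omega_x\in\csmooth(\ab)^W$ cannot equal $1$ for all large $|H|$. The intended space is therefore $\smooth(\ab)^W$, as used for $\Omega^{H_0}$ in Construction~\ref{Chopping - 2}. Your Step~1 already uses only the smoothness and $W$-invariance of $\Omega_x$, taking compact support from $\A f$, so it covers this intended reading.
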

When studying the pointwise behavior of a bi-$K$-invariant function (or a distribution) on $G$ away from the base point $e$, this corollary allows one to replace the function (or the distribution) with another that agrees with the original at the point in question, but whose Abel transform is supported away from the origin in $\ab$. This will be an essential tool in the proof of Theorem \ref{thm-kernel-estimates-G/K}.

\subsection{The bi-$K$-invariant Harish-Chandra Schwartz space}\label{subsec-schwartz-space} This section will provide the appropriate notion of \emph{rapid-decay} for functions on $G$ and $G/K$.

The \emph{Harish-Chandra Schwartz space} $\HCschwartz(G)$ consists of all functions $f\in\smooth(G)$ such that for all $q\in \N\cup\{0\}$ and $D_1,D_2\in U(\g)$, the seminorms \begin{equation}\label{eqn-HC-schwartz-seminorms}
    \|f\|_{D_1,D_2,q}\coloneqq\sup_{g\in G} (1+|g|)^q \cdot \varphi_0(g)^{-1}\cdot |f(D_1;g;D_2)|
\end{equation} are finite. $\HCschwartz(G)$ is a Fréchet algebra under convolution, with its topology induced by the seminorms in \eqref{eqn-HC-schwartz-seminorms}. See \cite{ArthurRankOne70} for more details on $\HCschwartz(G)$. The \emph{bi-$K$-invariant Harish-Chandra Schwartz space} $\HCschwartz(\GmodmodK)$ consists of the elements of $\HCschwartz(G)$ that are bi-$K$-invariant. Furthermore, if $\schwartz(\ab^*)^W$ is the space of $W$-invariant Euclidean Schwartz functions on $\ab^*$, then one has the following theorem.
\begin{theorem}\label{thm-Schwartz-algebra-characterization}{\cite[Chapter III, Theorem 1.17]{HelgasonGASS}}
    The bi-$K$-invariant Harish-Chandra Schwartz space $\HCschwartz(\GmodmodK)$ is a Fréchet algebra under convolution, and $\mathcal{H}:\HCschwartz(\GmodmodK)\xrightarrow{\cong} \schwartz(\ab^*)^W$ is a Fréchet algebra isomorphism with respect to the pointwise product on $\schwartz(\ab^*)^W$.
\end{theorem}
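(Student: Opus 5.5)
The plan is to establish the statement in three stages: the algebra structure, the mapping properties of $\HS$ in both directions, and then bijectivity and continuity of the inverse.

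\emph{Algebra structure.} First we check that $\HCschwartz(\GmodmodK)$ is a closed subspace of the Fréchet space $\HCschwartz(G)$. Bi-$K$-invariance is a pointwise condition preserved under limits in the seminorms \eqref{eqn-HC-schwartz-seminorms}, so the subspace is closed and hence Fréchet. Since $\HCschwartz(G)$ is a convolution algebra and the convolution of bi-$K$-invariant functions is again bi-$K$-invariant, $\HCschwartz(\GmodmodK)$ is a Fréchet subalgebra. Multiplicativity of $\HS$ is the standard functional equation of spherical functions, $\int_K \varphi_\lambda(xky)\,dk=\varphi_\lambda(x)\varphi_\lambda(y)$. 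Plugging it into $\HS(f*g)(\lambda)=\int\int f(y)g(y^{-1}x)\varphi_\lambda(x)\,dx\,dy$ and using bi-$K$-invariance gives $\HS(f*g)=\HS(f)\HS(g)$. The interchange of integrals is justified by the $\varphi_0$-weighted decay, which is integrable against $\varphi_0$ because $\varphi_0^2(1+|x|)^{-q}$ is integrable for $q$ large. $W$-invariance of $\HS f$ follows from \eqref{eqn-spherical-function-symmetry}.

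\emph{$\HS$ maps into $\schwartz(\ab^*)^W$ continuously.} Differentiation in $\lambda$ is handled by Lemma \ref{lem-spherical-functions}(3): $|\partial^\alpha_\lambda\varphi_\lambda(x)|\lesssim |x|^{|\alpha|}\varphi_0(x)$, so $|\partial^\alpha\HS f(\lambda)|\lesssim \|f\|_{1,1,q}\int (1+|x|)^{|\alpha|-q}\varphi_0(x)^2\,dx$. Polynomial weights in $\lambda$ come from Lemma \ref{lem-spherical-functions}(1): $(|\lambda|^2+|\rho|^2)^k\HS f=\HS((-\Delta)^k f)$ after integration by parts, which is legitimate for Schwartz-class decay. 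Combining the two, and using the Leibniz rule for $\lambda^\beta\partial^\alpha$ in terms of powers of $\Delta$, bounds every Schwartz seminorm of $\HS f$ by finitely many seminorms of $f$.

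\emph{Injectivity, surjectivity and continuity of the inverse.} Injectivity follows from the inversion formula \eqref{eqn-spherical-transform-inversion}, extended from $\csmooth$ by density and continuity. Surjectivity is the main obstacle: for $\psi\in\schwartz(\ab^*)^W$ one defines $f=\frac{1}{|W|}\int\psi(\lambda)\varphi_{-\lambda}(x)|\cfunc(\lambda)|^{-2}d\lambda$ and must show that $f$ lies in $\HCschwartz$. Smoothness and $D_1,D_2$-derivatives cost only polynomial factors of $\lambda$ by Lemma \ref{lem-spherical-functions}(2) and \eqref{eqn-c-function-estimate}, and these are absorbed by $\psi$. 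This already gives $|f(D_1;x;D_2)|\lesssim\varphi_0(x)$. The real difficulty is the extra factor $(1+|x|)^{-q}$. I would obtain it through the Abel transform, using diagram \eqref{diagram-Abel-triangle}: $\A f=\F^{-1}\mathcal{I}\psi$ is a $W$-invariant Euclidean Schwartz function on $\ab$. One then needs the (Trombi--Varadarajan / Anker) fact that $\A^{-1}$ maps $\schwartz(\ab)^W$ into $\HCschwartz(\GmodmodK)$. To prove it, write $\A^{-1}$ as a (pseudo)differential operator followed by a dual Abel transform and estimate it via the support-preservation of Theorem \ref{thm-abel-support-preserving}, applied to a dyadic partition of $\A f$ in $|H|$: each dyadic piece supported in $|H|\sim 2^j$ yields a function supported in $|x|\le 2^{j+1}$ with size controlled by $\varphi_0$ times the rapidly decaying Euclidean norm of the piece.

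Alternatively, and more faithfully to the citation, this last step is Harish-Chandra's theorem as presented in Helgason, and at this point in the paper it is simply invoked. Once bijectivity is established, continuity of the inverse follows from the open mapping theorem for Fréchet spaces.
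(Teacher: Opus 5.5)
The paper gives no proof of this statement. It quotes the result from Helgason and uses it as a black box (in Example~\ref{example-convolution-schwartz}, Step~10 and Theorem~\ref{thm-smoothing=schwartz}), so there is no internal argument to compare yours with. Your outline is the standard proof and its overall structure is sound. The subalgebra and multiplicativity arguments are fine. So are the Schwartz bounds for $\HS f$ via Lemma~\ref{lem-spherical-functions}(3) and $\HS((-\Delta)^k f)=(|\lambda|^2+|\rho|^2)^k\HS f$, injectivity via the inversion formula and density of $\csmooth(\GmodmodK)$, and the open mapping theorem. Your surjectivity step is essentially Anker's simple proof of the Harish-Chandra--Trombi--Varadarajan theorem. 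It rests on the same support preservation of $\A$ (Theorem~\ref{thm-abel-support-preserving}) that the paper quotes from Anker and later exploits in Construction~\ref{Chopping - 2}. Your route therefore shows that the paper's main technical device already proves the cited theorem, while the citation buys brevity. Your closing ``alternatively'' sentence, however, is the statement itself, not an argument.

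The surjectivity sketch needs three repairs.

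\emph{Defining the pieces.} You cannot write $\A f$ for $f=\HS^{-1}\psi$ before knowing $f\in\HCschwartz(\GmodmodK)$. Instead, put $\ell=\F^{-1}\mathcal{I}\psi\in\schwartz(\ab)^W$ and cut it with radial dyadic cut-offs into $\ell_j\in\csmooth(\ab)^W$. These are supported in $\{|H|\le 2^{j+1}\}$, and also in $\{|H|\ge 2^{j-1}\}$ for $j\ge 1$. Then set $f_j=\A^{-1}\ell_j=\HS^{-1}(\mathcal{I}^{-1}\F\ell_j)\in\csmooth(\GmodmodK)$.

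\emph{The estimate that does the work.} The detour through ``a pseudodifferential operator followed by a dual Abel transform'' is unnecessary. A multiplier with symbol $|\cfunc|^{-2}$ does not preserve supports, so by itself it gives no pointwise information. What you need is the crude bound $|f_j(D_1;x;D_2)|\lesssim\varphi_0(x)\int_{\ab^*}|\F\ell_j(\lambda)|\,\langle\lambda\rangle^{d_1+d_2+n-a}\,d\lambda\lesssim_N 2^{-jN}\varphi_0(x)$, with $d_i=\deg D_i$. This follows from Lemma~\ref{lem-spherical-functions}(2) and \eqref{eqn-c-function-estimate}. Combine it with $f_j(x)=0$ for $|x|>2^{j+1}$, which is Theorem~\ref{thm-abel-support-preserving}. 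Summing only over $j$ with $2^{j+1}\ge|x|$ then gives the bound $\varphi_0(x)(1+|x|)^{-N}$.

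\emph{Checking $\HS f=\psi$.} Surjectivity requires this, and you never verify it. The same estimates show that $\sum_j f_j$ converges in $\HCschwartz(\GmodmodK)$, and its limit is $f$ pointwise by the inversion integral. Continuity of $\HS$ then gives $\HS f=\sum_j\mathcal{I}^{-1}\F\ell_j=\psi$.
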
The space $\csmooth(\GmodmodK)$ is dense in $\HCschwartz(\GmodmodK)$ (see \cite[Chapter III, Lemma 1.21]{HelgasonGASS}), and the Abel transform $\A$ in \eqref{eqn-Abel-defn} extends to a topological isomorphism $\A:\HCschwartz(\GmodmodK)\xrightarrow{\cong} \schwartz(\ab)^W$. Furthermore, one has an analogous commutative diagram to that of \eqref{diagram-Abel-triangle} with all the spaces replaced by their Schwartz counterparts.

Let $\HCschwartz'(\GmodmodK)$ be the topological dual of $\HCschwartz(\GmodmodK)$ consisting of bi-$K$-invariant elements, called the \emph{bi-$K$-invariant tempered distributions on $G$}. Of course, the spherical transform in \eqref{eqn-spherical-transform-defn} induces a topological isomorphism $\mathcal{H}'\,:\,\schwartz'(\ab^*)^W\xrightarrow{\cong} \HCschwartz'(\GmodmodK)$. In particular, there is a convolution product between the tempered distributions $\HCschwartz'(\GmodmodK)$ and $\HCschwartz(\GmodmodK)$. Let $u\in \HCschwartz'(\GmodmodK)$ and $f\in \HCschwartz(\GmodmodK)$, then we can define $u\ast f\in \HCschwartz'(\GmodmodK)$ as a smooth function by \begin{equation}
    (u\ast f)\coloneqq \HS^{-1}\big(\HS(f)\cdot \HS(u)\big)\,.
\end{equation} See \cite[Chapter III, \S 1.2]{HelgasonGASS} for detailed statements and proofs of these facts. Elements of $\HCschwartz'(\GmodmodK)$, with some additional properties, will turn out to be the kernels of $G$-equivariant Harish-Chandra pseudodifferential operators (see Theorem \ref{thm-kernel-estimates-G/K}), and a generalized notion of this convolution product will be used to define the action of an equivariant Harish-Chandra pseudodifferential operator on $\HCschwartz(G/K)$.

\section{Pseudodifferential Operators (PSDOs)} The purpose of this section is to briefly discuss the theory of pseudodifferential operators on Euclidean spaces, and Hörmander pseudodifferential operators on smooth manifolds. The standard references for the material in this section include \cite{Hormander-III} and \cite{Stein93-Real-Variable-Methods}.

\subsection{PSDOs on Euclidean spaces}
On $\R^n$, a \emph{symbol function} $a(x,\xi)$ of \emph{order $r$} is a smooth function of $(x,\xi)\in \R^n\times \R^n$ that satisfies the estimates \begin{equation}\label{eqn-symbol-bound-Rn}
    |\partial_x^\beta \partial_\xi^\alpha a(x,\xi)|\leq C_{\alpha,\beta}\cdot \langle \xi\rangle^{r-|\alpha|} 
\end{equation} for all $x\in \R^n$ and $|\xi|\geq 1$. The set of symbol functions of order $r$ is denoted by $S^r(\R^n)$, and the corresponding \emph{pseudodifferential operator (PSDO)} $T_a:\csmooth(\R^n)\rightarrow \smooth(\R^n)$ of \emph{order $r$} is defined as \begin{equation}\label{eqn-PSDO-Rn}
    (T_a f)(x) \coloneqq \int_{\R^n} a(x,\xi)\, \widehat{f}(\xi)\, e^{i x \cdot \xi}\, d\xi\,.
\end{equation} This is a continuous operator from $\csmooth(\R^n)\rightarrow \smooth(\R^n)$ and from $\schwartz(\R^n)\rightarrow \schwartz(\R^n)$ \cite[Theorem 18.1.6]{Hormander-III}, and the translation-invariant ones are precisely the Fourier multipliers with smooth symbols i.e., those for which $a(x,\xi)=m(\xi)$ is independent of $x$. 

\subsection{The kernel criteria for PSDOs on Euclidean spaces}\label{subsec-kernel-criteria-euclidean}
Along with the notion of symbol functions $a(x,\xi)$ as in \eqref{eqn-symbol-bound-Rn}, pseudodifferential operators can also be characterized in terms of their \emph{Schwartz kernels}. A brief account of this characterization is given here, and for more details, see \cite[Chapter VI.4]{Stein93-Real-Variable-Methods}.

Thanks to the Schwartz kernel theorem \cite[Theorem 5.2.1]{Hormander-I}, any continuous operator $T:\csmooth(\R^n)\rightarrow \mathcal{D}'(\R^n)$ admits a Schwartz kernel $\mathcal{K}\in \mathcal{D}'(\R^n\times \R^n)$ such that $(Tf)(x)=\int_{\R^n}\mathcal{K}(x,y)\,f(y)\, dy$ (in the sense of distributions). The Schwartz kernel $\mathcal{K}_a$, as an element of $\schwartz'(\R^n\times \R^n)$, of a pseudodifferential operator with symbol $a(x,\xi)$ is given by \begin{equation}
    \mathcal{K}_a(x,y)=\int_{\R^n} a(x,\xi)\,e^{i(x-y)\cdot \xi}\,d\xi\,,
\end{equation}
and $k_a(x,z)\coloneqq K_a(x,x-z)$ \footnote{This shows that the kernel of a Fourier multiplier is just a distribution in $z$.} is the distributional inverse Fourier transform of the symbol $a(x,\cdot)$. The following theorem provides a concrete description of the distributional kernels $k_a(x,z)$.

\begin{theorem}\label{thm-kernel-estimates-Euclidean}\cite[Chapter VI.4, Proposition 1]{Stein93-Real-Variable-Methods}
    Let $a(x,\xi)$ be a symbol function of order $r$. Then the kernel $k_a(x,z)$ is in $\smooth(\R^n\times (\R^n\setminus\{0\}))$, and there exists a constant $C_{\alpha,\beta,N}$ such that \begin{equation}\label{eqn-kernel-estimate-Euclidean}
    |\partial_x^\beta \partial_z^\alpha k_a(x,z)|\leq C_{\alpha,\beta,N} |z|^{-n-r-|\alpha|- N}, \quad \text{ for all }z\neq 0, 
    \end{equation} for all multi-indices $\alpha,\beta$ and for all $N\geq 0$ such that $n+r+|\alpha|+N>0$.
\end{theorem}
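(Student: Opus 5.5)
The plan is the classical Littlewood--Paley argument from \cite[Chapter VI.4]{Stein93-Real-Variable-Methods}. First note how derivatives act on the kernel. Since
\[
k_a(x,z)=\int_{\R^n} a(x,\xi)\,e^{iz\cdot\xi}\,d\xi
\]
in the sense of distributions, applying $\partial_x^\beta$ simply replaces $a$ by $\partial_x^\beta a$, which is again a symbol of order $r$ with uniform constants. Applying $\partial_z^\alpha$ multiplies the symbol by $(i\xi)^\alpha$, giving a symbol of order $r+|\alpha|$. It therefore suffices to prove the case $\alpha=\beta=0$ for an arbitrary order $r$, with constants depending only on finitely many symbol seminorms. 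We then apply that case with $r$ replaced by $r+|\alpha|$.

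Next, decompose the symbol dyadically. Choose $\psi_0\in\csmooth(\R^n)$ supported in $|\xi|\le 2$, and set $\psi_j(\xi)=\psi(2^{-j}\xi)$ for $j\ge1$, where $\psi$ is supported in $1/2\le|\xi|\le2$ and $\sum_{j\ge0}\psi_j=1$. Put $a_j=a\,\psi_j$ and $k_j(x,z)=\int a_j(x,\xi)e^{iz\cdot\xi}\,d\xi$, so that $k_a=\sum_j k_j$ in $\schwartz'$. Each $k_j$ is smooth because $a_j$ has compact support in $\xi$. Since $\partial_\xi^\gamma a_j$ is bounded by $C\,2^{j(r-|\gamma|)}$ on a set of volume about $2^{jn}$, we get two bounds. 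Directly,
\[
|k_j(x,z)|\lesssim 2^{j(n+r)}.
\]
Integrating by parts $M$ times, using $(iz)^\gamma e^{iz\cdot\xi}=\partial_\xi^\gamma e^{iz\cdot\xi}$,
\[
|z|^{M}\,|k_j(x,z)|\lesssim 2^{j(n+r-M)}.
\]

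Now sum over $j$ for fixed $z\ne0$. First take $M$ large, so that $n+r-M<0$. Then $\sum_j k_j$ converges uniformly on $\{|z|\ge\epsilon\}$, together with all $x$- and $z$-derivatives by the reduction in the first step. This shows $k_a\in\smooth(\R^n\times(\R^n\setminus\{0\}))$. For the estimate with exponent $-n-r-N$, where $n+r+N>0$, there are two regimes.

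When $|z|\ge1$, use the bound with $M=n+r+N+n+1$, or any large $M$. This gives $|k_a|\lesssim|z|^{-M}\le|z|^{-n-r-N}$, since $M\ge n+r+N$ and $|z|\ge1$.

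When $|z|<1$, split the sum at $2^j\approx|z|^{-1}$.
\begin{itemize}
\item For $2^j\le|z|^{-1}$, use the bound with $M=N$ (taking $N$ an integer first, then interpolating): $\sum 2^{j(n+r-N)}\,|z|^{-N}$. If $n+r-N>0$, this geometric sum is dominated by its top term, which is $\lesssim|z|^{-(n+r-N)-N}=|z|^{-n-r}$.
\item For $2^j>|z|^{-1}$, use a large $M>n+r$. The tail $\sum_{2^j>|z|^{-1}}2^{j(n+r-M)}|z|^{-M}$ is $\lesssim|z|^{-n-r}$.
\end{itemize}
Together these give $|k_a|\lesssim|z|^{-n-r}\le|z|^{-n-r-N}$ for $|z|<1$ and $N\ge0$. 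The borderline cases $n+r-N\le0$ in the low-frequency sum produce at worst a logarithm or a bounded quantity. These are absorbed using the slack in $|z|^{-N}$ with $|z|<1$: choose an integer $M<n+r+N$ slightly below it in the low-frequency range.

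The main obstacle is this bookkeeping near $|z|\to0$ when $n+r\le0$. There the geometric sums change character, and the condition $n+r+N>0$ is needed precisely to keep the low-frequency sum convergent. Interpolating between integer values of $M$, via $\min(A,B)\le A^{\theta}B^{1-\theta}$, handles non-integer exponents.
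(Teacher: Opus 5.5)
Your proposal is correct and follows the same route the paper indicates when it defers to Stein's proof. You decompose $a$ dyadically in $\xi$, derive the two bounds $|k_j|\lesssim 2^{j(n+r)}$ and $|z|^{M}|k_j|\lesssim 2^{j(n+r-M)}$ by integration by parts, and split the $j$-sum at $2^j\approx |z|^{-1}$, exactly as Step 7 of the proof of Theorem \ref{thm-kernel-estimates-G/K} does. One small wording fix: for fixed $z$ the low-frequency sum is finite, so $n+r+N>0$ is not needed for convergence; it is what lets the bounded or logarithmic contribution (when $n+r\le 0$) be absorbed into $|z|^{-n-r-N}$ for $|z|<1$.
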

In simple words, it says that $k_a(x,\cdot)$ grows with a definitive rate as $|z|\to 0$, and it decays rapidly as $|z|\to \infty$ i.e., $z\mapsto k_a(x,z)$ is a Schwartz function away from the origin. The latter property is a special feature of the Euclidean setting. We will refer to it as the Schwartz kernel $\mathcal{K}_a$ being \emph{Schwartz away from the diagonal}. It will be clear in Section \ref{subsec-Hormander-PSDO} that Hörmander pseudodifferential operators on smooth manifolds fail to have this property.

The proof of Theorem \ref{thm-kernel-estimates-Euclidean} uses a now standard technique of decomposing the symbol $a(x,\xi)$ in the $\xi$-variable, into pieces $a_j(x,\xi)$ that are supported in the dyadic annulus $\{2^{j-1}\leq |\xi|\leq 2^{j+1}\}$ for all $j\in \N$. It then reduces to estimating the kernel contributions from each of them separately, which is done using integration by parts. The statement and proof of Theorem \ref{thm-kernel-estimates-Euclidean} have inspired the main technical result of this paper, which is Theorem \ref{thm-kernel-estimates-G/K}. Moreover, the kernel estimates in Theorem \ref{thm-kernel-estimates-Euclidean} play a bigger role in the theory of pseudodifferential operators. They (almost) characterize pseudodifferential operators on $\R^n$.

\begin{theorem}[{\cite[Remark, Pg. 245]{Stein93-Real-Variable-Methods}}]\label{thm-kernel-PSDO-equivalence-Euclidean}
    Let $r<0$, and suppose $k\in \smooth(\R^n\times (\R^n\setminus\{0\}))$ is locally integrable and satisfies the estimates in \eqref{eqn-kernel-estimate-Euclidean}. Then $a(x,\xi)\coloneqq \int_{\R^n}\, k(x,z)e^{-i z\cdot \xi}\,dz$ defines a symbol function of order $r$.
\end{theorem}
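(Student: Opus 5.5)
The plan is to show directly that $a(x,\xi)$ satisfies the symbol estimates \eqref{eqn-symbol-bound-Rn} with $r<0$. I will treat $\partial_x^\beta$ first: it passes under the integral, and $\partial_x^\beta k$ obeys the same bounds \eqref{eqn-kernel-estimate-Euclidean} with $\alpha=0$. Since $r<0$, the choice $N=0$ gives $|k(x,z)|\lesssim |z|^{-n-r}$, which is integrable near $0$. Taking $N$ large gives integrability at infinity. So $a$ and all its $x$-derivatives are bounded and continuous, and it suffices to treat $\beta=0$ with the $\xi$-derivatives.

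For $\partial_\xi^\alpha a$, I write formally $\partial_\xi^\alpha a(x,\xi)=\int (-iz)^\alpha k(x,z)e^{-iz\cdot\xi}\,dz$. The kernel $z^\alpha k$ satisfies the estimates of order $r-|\alpha|$ near $0$ and still decays rapidly at infinity. When $n+r-|\alpha|<n$ this is absolutely integrable near the origin. So it suffices to prove the case $\alpha=0$: for a kernel $k$ with $|\partial_z^\gamma k|\lesssim |z|^{-n-r-|\gamma|}$ near $0$ and rapid decay at infinity, we need $|\int k(x,z)e^{-iz\xi}dz|\lesssim |\xi|^{r}$ for $|\xi|\ge1$. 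Two cautions apply to the reduction. First, the derivative estimates for $z^\alpha k$ follow from the Leibniz rule. Second, when $r-|\alpha|$ is very negative, the absolute bound of the integral is only $O(1)$, which is not enough. So the argument must give decay $|\xi|^{r-|\alpha|}$ for every order, not just boundedness.

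To get the decay, fix $|\xi|\ge 1$ and split with a smooth cutoff $\chi(z|\xi|)$, where $\chi=1$ near $0$ and is supported in the unit ball.

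The near part, $|z|\lesssim|\xi|^{-1}$, is bounded by $\int_{|z|\le |\xi|^{-1}}|z|^{-n-r}dz\sim |\xi|^{r}$. This uses $r<0$; for the order $r-|\alpha|$ kernel the exponent is even more favorable.

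For the far part, I integrate by parts $M$ times using $e^{-iz\xi}=(i|\xi|^{-2}\xi\cdot\nabla_z)^M e^{-iz\xi}$. Derivatives landing on $1-\chi(z|\xi|)$ cost $|\xi|$ each and are supported on $|z|\sim|\xi|^{-1}$, so they contribute $|\xi|^{-M}\cdot|\xi|^{M}\cdot|\xi|^{r}$. Derivatives landing on $k$ give $|\xi|^{-M}\int_{|z|\ge|\xi|^{-1}}|z|^{-n-r-M}dz\sim|\xi|^{-M}|\xi|^{r+M}=|\xi|^r$, once $M>-r$. At infinity, each such integral converges because the estimates hold with arbitrary $N$ (rapid decay). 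This gives the bound $|\xi|^{r}$, and the same argument applied to $z^\alpha k$ gives $|\xi|^{r-|\alpha|}$. Smoothness of $a$ in $(x,\xi)$ follows from differentiating under the integral, which is justified by the integrability established above.

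The main obstacle is bookkeeping around the origin. The reduction $\partial_\xi^\alpha\leftrightarrow z^\alpha k$ must be justified as an identity of tempered distributions, since $k$ is only locally integrable. I would handle this by approximating $k$ by $k\,(1-\chi(z/\epsilon))$, proving the estimates uniformly in $\epsilon$, and passing to the limit. One must also check that the integration-by-parts exponent $M>-r+|\alpha|$ keeps all boundary terms zero, which is the case because the cutoff $1-\chi(z|\xi|)$ vanishes near the origin.
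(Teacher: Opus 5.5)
The paper gives no proof of this statement; it cites Stein. Your strategy is the standard one: split at $|z|\sim|\xi|^{-1}$, bound the near part absolutely, integrate by parts in the far part, and move $\partial_\xi^\alpha$ onto $(-iz)^\alpha k$. It is the continuous analogue of the dyadic decomposition in $z$ that the paper uses in Construction \ref{chopping-0} and Lemma \ref{lem-estimate-m-j}.

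There is, however, a genuine gap at your first step: ``since $r<0$, the choice $N=0$ gives $|k(x,z)|\lesssim|z|^{-n-r}$.'' In \eqref{eqn-kernel-estimate-Euclidean} only those $N$ with $n+r+|\alpha|+N>0$ are allowed. So for $\alpha=0$ the value $N=0$ is admissible exactly when $r>-n$, not whenever $r<0$. For $r\le -n$ the hypothesis only gives $|k|\lesssim_\epsilon |z|^{-\epsilon}$ near $0$. Your near-part integral is then only $O(|\xi|^{-n+\epsilon})$, never $O(|\xi|^{r})$. The same loss appears in the cut-off terms of the far part and in the low-order Leibniz terms for $z^\alpha k$. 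The bound you use is not merely unproved there but false in general. For $r\le -n$ it already fails for the kernel of the honest symbol $\langle\xi\rangle^{r}$, which is unbounded near $0$ when $r=-n$ and nonzero at $0$ when $r<-n$.

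No bookkeeping can repair this, because the statement as written is false for $r\le -n$, at least when $n=1$. Take $k(x,z)=\operatorname{sgn}(z)\chi(z)$ with $\chi\in\csmooth(\R)$ equal to $1$ near $0$. This $k$ is bounded and its $z$-derivatives vanish near $0$. Every admissible exponent $-1-r-|\alpha|-N$ is negative, so \eqref{eqn-kernel-estimate-Euclidean} holds for every $r$. Yet $a(\xi)=-2i/\xi+O(|\xi|^{-\infty})$, which is not $O(|\xi|^{r})$ for $r<-1$. Similarly, $\operatorname{sgn}(z)\log(1/|z|)\chi(z)$ satisfies the hypotheses for $r=-1$ but gives $|a(\xi)|\sim 2|\xi|^{-1}\log|\xi|$.

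What your argument does prove, completely, is the statement for $-n<r<0$:
\begin{itemize}
\item $N=0$ is admissible for every $\alpha$.
\item $z^\alpha k$ satisfies the order-$(r-|\alpha|)$ bounds in their strong form, and all your integrals converge as claimed.
\item Differentiation under the integral sign is justified directly by dominated convergence, since $z^\alpha\partial_x^\beta k(x,\cdot)\in L^1$ uniformly in $x$. So your $\epsilon$-regularization is unnecessary.
\end{itemize}
This restricted range is also what the paper needs in Step 10, where the order is $r+s\in(-a,0)\subset(-n,0)$.
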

\begin{remark}
    The obstruction to obtaining the perfect theorem (i.e., for all $r$) is rather simple. For $r\geq 0$, the function $z\mapsto |z|^{-n-r}$ is not locally integrable, and hence, isn't a tempered distribution on $\R^n\times \R^n$. So, one needs additional ``cancellation conditions'' on the kernel $k$ to ensure that $a(x,\xi)$ defined as in Theorem \ref{thm-kernel-PSDO-equivalence-Euclidean} is actually a symbol function of order $r$. 
\end{remark}

\subsection{Hörmander PSDOs on smooth manifolds}\label{subsec-Hormander-PSDO}
For extending the theory to smooth manifolds, one starts with the fact that the class of Euclidean pseudodifferential operators of order $r$ as defined in \eqref{eqn-PSDO-Rn} is invariant under diffeomorphisms \cite[Theorem 18.1.17]{Hormander-III}. Although the operator-class is invariant, the symbol functions are not, except the \emph{principal symbol} \footnote{On $\R^n$, the order $r$ principal symbol of $a(x,\xi)$ is defined as $[a]\in S^r(\R^n)/S^{r-1}(\R^n)$.}.

Let $M$ be a smooth manifold. A continuous linear operator $T:\csmooth(M)\rightarrow \smooth(M)$ is called a \emph{Hörmander pseudodifferential operator (or, Hörmander PSDO)} of order $r$ if for every chart $(U,\psi)$ and a cut-off function $\chi\in \csmooth(U)$, the operator $\chi T \chi : \csmooth(U)\rightarrow \csmooth(U)$ is the pull-back of an Euclidean pseudodifferential operator of order $r$ on $\psi(U)$. As stated before, a major drawback of this definition is that these operators have no well-defined notion of a \emph{complete symbol function}. The closest thing that one can get is a \emph{principal symbol}, which is defined as an equivalence class of functions on $T^*M\setminus\{0\}$. 

We now turn to their Schwartz kernels. Locally, near the diagonal $\{(m,m):m\in M\}\subset M\times M$, the Schwartz kernel of $T$ behaves like Schwartz kernels of Euclidean pseudodifferential operators. Away from the diagonal, however, the only thing that can be said is that the Schwartz kernels are smooth functions in this region. This marks a key distinction from the Euclidean theory, since Theorem \ref{thm-kernel-estimates-Euclidean} prevents arbitrary smooth functions on $\R^n\times \R^n$ from being kernels of Euclidean PSDOs on $\R^n$, and requires them to be Schwartz functions away from the diagonal.

A condition that often gets imposed on the definition of Hörmander pseudodifferential operators on manifolds is that they be \emph{properly-supported}, see \cite[Definition 18.1.21]{Hormander-III}. This is primarily required to make up for the absence of stronger off-diagonal conditions on their Schwartz kernels.

\section{Equivariant Hörmander PSDOs on $G/K$}\label{sec-equivariant-psdos-GmodK}

The purpose of this section is to look at $G$-equivariant Hörmander pseudodifferential operators on $G/K$ through the lens of the Helgason-Fourier transform (as in Definition \ref{def-helgason-fourier-transform}). To facilitate this, we will introduce the notion of \emph{Harish-Chandra symbol functions} (see Definition \ref{defn-harish-chandra-symbol}). The main result of this section is Theorem \ref{thm-Hormander-implies-HC-upgrade}, which is the easier direction of the classification of $G$-equivariant Hörmander pseudodifferential operators on $G/K$ in terms of their Harish-Chandra symbol functions.

\subsection{Kernels of $G$-equivariant operators on $G/K$}

On $G/K$, a continuous linear operator $T$ on $\csmooth(G/K)$ is said to be \emph{$G$-equivariant} (or simply, \emph{equivariant}) if $T(g\cdot f)=g\cdot (Tf)$ for all $g\in G$. Of course, such an operator will have a Schwartz kernel $\mathcal{K}\in \mathcal{D}'(G/K\times G/K)$, and the equivariance condition implies that $g.\mathcal{K}=\mathcal{K}$ for all $g\in G$, where the $G$-action is the diagonal action. However, since the action of an equivariant operator $T$ on a function $f\in \csmooth(G/K)$ is completely determined by the value $(Tf)(eK)$, it naturally defines another distribution $k\in \mathcal{D}'(G/K)$ such that $Tf=k*f$ \footnote{The convolution is technically on $G$, after transporting $k$ and $f$ to objects on $G$ by imposing right $K$-invariance.}. We will call $k$ the \emph{equivariant kernel} of $T$. The following lemma summarizes the basic properties of the equivariant kernel $k$ of an equivariant Hörmander pseudodifferential operator on $G/K$.

\begin{lemma}[{\cite[Section 3.4]{DebelloHigson}}]\label{lem-equivariant-kernel}
    Let $T:\csmooth(G/K)\rightarrow\smooth(G/K)$ be an equivariant Hörmander pseudodifferential operator. Then the equivariant kernel is a bi-$K$-invariant distribution on $G$ i.e., $k\in \mathcal{D}'(\GmodmodK)$. Furthermore, as an element of $\mathcal{D}'(G/K)$, the singular support of $k$ is contained in the identity coset $eK$. Hence, the operator $T$ can be written as 
    \begin{equation}\label{eqn-convolution-kernel-operator-GmodK}
    (Tf)(x)=\int_{G}k(y^{-1}x)f(y)\,dy=\int_{G/K} k(y^{-1}x)\,f(y)\,dy\,.
    \end{equation}
\end{lemma}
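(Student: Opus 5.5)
We must prove Lemma \ref{lem-equivariant-kernel}: there are three claims (bi-$K$-invariance of $k$, singular support of $k$ contained in $eK$, and the convolution formula), and they follow from $G$-equivariance together with the pseudolocality of Hörmander PSDOs.

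\emph{Constructing $k$.} By the Schwartz kernel theorem, $T$ has a kernel $\mathcal{K}\in\mathcal{D}'(G/K\times G/K)$. Equivariance means $\mathcal{K}$ is invariant under the diagonal $G$-action. I would define $k\in\mathcal{D}'(G/K)$ by
\[
\langle k,f\rangle \coloneqq (T\check f)(eK), \qquad \check f(y)=f(y^{-1}),
\]
interpreted after lifting to right-$K$-invariant objects on $G$. More precisely, $k$ is the restriction of $\mathcal{K}$ to the slice $\{eK\}\times G/K$, with inversion built in so that the result is a function of $y^{-1}$. This restriction makes sense because a $G$-invariant distribution on $G/K\times G/K$ is the pullback of a distribution on $G\times_K (G/K)\cong (G/K\times G/K)$ along the submersion $(x,y)\mapsto x^{-1}y$; pullback of distributions along a submersion is well defined. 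Then, for $f\in\csmooth(G/K)$, equivariance gives $(Tf)(gK)=(T(g^{-1}\cdot f))(eK)$. Unwinding the definition yields \eqref{eqn-convolution-kernel-operator-GmodK}, with $k$ viewed as a right-$K$-invariant distribution on $G$.

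\emph{Bi-$K$-invariance.} Right $K$-invariance of $k$ holds by construction. For left invariance, take $m\in K$. Since $m$ fixes $eK$, we have $(T(m\cdot f))(eK)=(m\cdot Tf)(eK)=(Tf)(eK)$. This says $\langle k, f\rangle$ is unchanged when $f$ is left-translated by $m$ (after inversion), so $k$ is left $K$-invariant. Hence $k\in\mathcal{D}'(\GmodmodK)$.

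\emph{Singular support.} A Hörmander PSDO is pseudolocal: its Schwartz kernel is smooth off the diagonal. This follows from Theorem \ref{thm-kernel-estimates-Euclidean} in charts, together with smoothness of $\mathcal{K}$ on $(U\times V)$ for disjoint $U$, $V$, which holds by definition of Hörmander PSDOs on manifolds. Now take $y_0K\neq eK$. The point $(eK,y_0K)$ lies off the diagonal, so $\mathcal{K}$ is smooth near it. Because the map $(x,y)\mapsto x^{-1}y$ is a submersion and $\mathcal{K}$ is the pullback of $k$ along it, smoothness of $\mathcal{K}$ near $(eK,y_0K)$ forces smoothness of $k$ near $y_0K$. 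Thus $\operatorname{sing\,supp}k\subseteq\{eK\}$.

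The main technical point is the identification of $\mathcal{K}$ with the pullback of $k$. This must be checked carefully using a local section of $G\to G/K$ to trivialize, since the pullback is what lets smoothness off the diagonal transfer to $k$. Everything else is formal.
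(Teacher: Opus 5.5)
Your proposal is correct and is essentially the argument the paper intends; the paper only sketches it in the paragraph before the lemma and defers to DeBello--Higson. As in that sketch, you define the kernel through the continuous functional $f\mapsto (Tf)(eK)$, read off the convolution formula from equivariance, obtain $K$-invariance from the fact that $K$ fixes $eK$, and transfer pseudolocality of the Schwartz kernel to $k$. As you anticipate, $(x,y)\mapsto x^{-1}y$ is only well defined into $K\backslash G/K$, so the submersion step has to be run either with a local section $s$ via $(x,y)\mapsto s(x)^{-1}y$ on $U\times G/K$ (or upstairs on $G\times G$), or more simply by noting that near $y_0K\neq eK$ the functional $f\mapsto (Tf)(eK)$ is given by the smooth function $\mathcal{K}(eK,\cdot)$ and that inversion preserves $G\setminus K$.
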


Thanks to the $KAK$ decomposition of $G$, the equivariant kernel $k$ of an equivariant Hörmander PSDO can also be seen as an element of $\mathcal{D}'(\ab)^W$ via restriction onto $\ab$. 

\subsection{Equivariant Hörmander PSDOs w.r.t. the Helgason-Fourier transform}

The definition of a $G$-equivariant Hörmander pseudodifferential operator on $G/K$ (as in Section \ref{subsec-Hormander-PSDO}) is not the most ideal for our purpose, since it is built using the Fourier transform on $\R^n$ rather than the natural harmonic analysis on $G/K$. So, the next step is to view these operators through the representation theory of $G$, and in particular, via the Helgason-Fourier transform in Definition \ref{def-helgason-fourier-transform}. A step in this direction is the following example, which shows that convolution operators (as in \eqref{eqn-convolution-kernel-operator-GmodK}) with equivariant kernels in $\HCschwartz(\GmodmodK)$ are multiplier operators with respect to the Helgason-Fourier transform whose corresponding multiplier functions belong to the Euclidean Schwartz space $\schwartz(\ab^*)$.

\begin{example}[Convolutions with $\HCschwartz(\GmodmodK)$]\label{example-convolution-schwartz}
    If $k\in \HCschwartz(\GmodmodK)$, and $T$ is the corresponding convolution operator as given in \eqref{eqn-convolution-kernel-operator-GmodK}, the Helgason-Fourier transform (as in Definition \ref{def-helgason-fourier-transform}) of $Tf$ for $f\in \HCschwartz(G/K)$ is given by $$(\widetilde{Tf})(\lambda,k)=(\mathcal{H}k)(\lambda)\cdot\widetilde{f}(\lambda,k)$$ where $\mathcal{H}k\in \schwartz(\ab^*)$ (due to Theorem \ref{thm-Schwartz-algebra-characterization}) is the spherical transform of $k$ as in \eqref{eqn-spherical-transform-defn}. It follows from \cite[Chapter III, Theorem 1.10]{HelgasonGASS} that $\widetilde{Tf}\in \schwartz(\ab^*\times K/M)$ \footnote{The Schwartz space on $\ab^*\times K/M$ is the natural extension of the Schwartz space on $\ab^*$. See \cite[Chapter III, \S 1.2]{HelgasonGASS} for the exact definition and related results.}. In particular, $T$ is a \emph{multiplier operator} with respect to the Helgason-Fourier transform, with multiplier function $(\mathcal{H}k)(\lambda)$, whose expression is given by
\begin{align}\label{eqn-multiplier-operator-GmodK}
    \begin{split}
    (Tf)(x)&=\frac{1}{|W|}\int_{\ab^*}\int_{K/M}\,\mathcal{H}k(\lambda)\,\widetilde{f}(\lambda,k)\,e_{-\lambda,k}(x)\,dk\,|\cfunc(\lambda)|^{-2}\,d\lambda\\
    &=\frac{1}{|W|}\int_{\ab^*}\int_{G/K} \HS k(\lambda) \,\varphi_{-\lambda}(y^{-1}x)\,f(y)\,dy\,|\cfunc(\lambda)|^{-2}\,d\lambda\,.
    \end{split}
\end{align}

\end{example}

The rest of this subsection will be devoted to showing that $G$-equivariant Hörmander pseudodifferential operators on $G/K$ that satisfy a \emph{rapid off-diagonal decay} condition are multiplier operators with respect to the Helgason-Fourier transform, with the multipliers being \emph{Harish-Chandra symbol functions}. We define these next, and we prove the desired result first for properly-supported, equivariant Hörmander PSDOs on $G/K$ (Theorem \ref{thm-Hormander-implies-HC}). Then, we shall upgrade this in Theorem \ref{thm-Hormander-implies-HC-upgrade}, which is the main result of this section.

\begin{definition}[Harish-Chandra Symbol functions]\label{defn-harish-chandra-symbol}
    A function $m\in \smooth(\ab^*)^W$ is called a \emph{Harish-Chandra symbol function} of order $r$ if for all multi-indices $\alpha$, there exists a constant $C_\alpha>0$ such that $$ |\partial_\lambda^\alpha m(\lambda)|\leq C_\alpha\,\langle \lambda\rangle ^{r-|\alpha|}\,.$$ The set of all Harish-Chandra symbol functions of order $r$ will be denoted by $S^{r,G}_{\operatorname{HC}}(G/K)$ \footnote{This is a Fréchet space with the standard symbol semi-norms.}.
\end{definition}

\begin{theorem}\label{thm-Hormander-implies-HC}
    Let $T:\csmooth(G/K)\rightarrow \csmooth(G/K)$ be a properly-supported, $G$-equivariant, Hörmander pseudodifferential operator of order $r$. Then $T$ is a multiplier operator with respect to the Helgason-Fourier transform with the multiplier being a Harish-Chandra symbol function $m(\lambda)\in S^{r,G}_{\operatorname{HC}}(G/K)$.
\end{theorem}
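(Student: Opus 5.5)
Let $k\in\mathcal{D}'(\GmodmodK)$ be the equivariant kernel of $T$ from Lemma \ref{lem-equivariant-kernel}, so that $Tf=k*f$. Since $T$ is properly supported and $G$-equivariant, the support of $k$ is compact. Indeed, proper support means that $\{y:(eK,y)\in\supp\mathcal{K}\}$ is compact, and this set is exactly $(\supp k)^{-1}$ modulo $K$. So $k$ is a compactly supported bi-$K$-invariant distribution. The plan is to define $m\coloneqq\HS(k)$, the spherical transform of $k$, via $m(\lambda)=\langle k,\varphi_\lambda\rangle$. This makes sense because $\varphi_\lambda$ is smooth and $k$ has compact support. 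The distributional Paley--Wiener theorem (the dual of Theorem \ref{thm-abel-support-preserving}) shows that $m$ is entire and $W$-invariant. The multiplier identity $\widetilde{Tf}(\lambda,k)=m(\lambda)\widetilde f(\lambda,k)$ for $f\in\csmooth(G/K)$ then follows exactly as in Example \ref{example-convolution-schwartz}: one uses the convolution identity $\int k(y^{-1}x)e_{\lambda,k}(x)\,dx$ together with the eigenfunction property of $e_{\lambda,k}$ under convolution with bi-$K$-invariant compactly supported distributions. This reduces to $\HS(k)(\lambda)\,e_{\lambda,k}(y)$ by the standard functional equation for spherical functions. So the real content is the symbol estimates $|\partial^\alpha_\lambda m(\lambda)|\lesssim\langle\lambda\rangle^{r-|\alpha|}$.

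To get these, I would split $k=k_0+k_1$ with a bi-$K$-invariant cutoff $\chi\in\csmooth(\GmodmodK)$, equal to $1$ on a small ball $\{|x|<\epsilon\}$ and supported in $\{|x|<2\epsilon\}$, where $\epsilon$ is below the injectivity radius. Set $k_0=\chi k$. Then $k_1=(1-\chi)k$ is smooth (the singular support of $k$ lies in $eK$) and compactly supported, so $\HS(k_1)$ lies in the Paley--Wiener space restricted to $\ab^*$, hence in $\schwartz(\ab^*)^W$. That is a symbol of every order. For $k_0$, I would work in exponential coordinates $\exp:\p\to G/K$ near $eK$. There the operator $f\mapsto k_0*f$ is, locally, a Hörmander PSDO of order $r$, and the kernel $z\mapsto k_0(\exp z)$ on $\p\cong\R^n$ is ($K$-invariant and) a compactly supported distribution, smooth away from $0$, whose Euclidean Fourier transform $\sigma(\xi)$ is a classical Euclidean symbol of order $r$ on $\p^*$. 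This is the standard characterization of kernels of PSDOs at the diagonal point $x=eK$, i.e.\ the full symbol frozen at $x=eK$ modulo $S^{-\infty}$.

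Then I compute $m_0(\lambda)=\langle k_0,\varphi_\lambda\rangle$ by writing $\varphi_\lambda(\exp z)$ near $0$ through the Euclidean Fourier inversion of $\sigma$:
\begin{equation*}
m_0(\lambda)=\int_{\p^*}\sigma(\xi)\,\widehat{(\chi'\varphi_\lambda\circ\exp\cdot J)}(-\xi)\,d\xi ,
\end{equation*}
where $J$ is the Jacobian and $\chi'$ a slightly larger cutoff. An alternative I would try first, since it avoids oscillatory integrals with the Iwasawa phase, is the Abel transform route. By Theorem \ref{thm-abel-support-preserving} and the diagram \eqref{diagram-Abel-triangle}, $m_0=\mathcal{F}\mathcal{A}k_0$ up to $\mathcal{I}$, and $\mathcal{A}k_0$ is a compactly supported $W$-invariant distribution on $\ab$. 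So it suffices to show that $\mathcal{A}k_0$ is smooth away from $0$ and is, near $0$, the kernel of a Euclidean PSDO of order $r$ on $\ab$ (with the conormal/symbol-type singularity at $0$). Then Theorem \ref{thm-kernel-PSDO-equivalence-Euclidean} in the appropriate direction, or directly the standard fact that compactly supported distributions conormal to $\{0\}$ of order $r$ have symbol Fourier transforms, gives the estimates for $\mathcal{F}\mathcal{A}k_0$.

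The main obstacle is precisely this last step: showing that the Abel transform, an integral over $N$ of $k_0(\exp(H)n)$, maps distributions conormal to $eK$ of order $r$ in dimension $n$ to distributions on $\ab$ conormal to $0$ of order $r$ in dimension $a$. Integration along the fibres of the submersion $(H,n)\mapsto \exp(H)n\cdot K$ should do this via pushforward of conormal distributions. The pushforward of the conormal singularity at $eK$ gives a singularity conormal to $0$ of the same symbol order, since pushforward along fibres corresponds to restricting the symbol to the conormal directions $\ab^*\subset\p^*$. Concretely, $\mathcal{F}\mathcal{A}k_0(\lambda)$ should equal $\sigma|_{\ab^*}(\lambda)$ up to lower-order terms, after a change of variables whose Jacobian is smooth and equals $1$ at the origin. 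I would verify this by writing $k_0$ as an oscillatory integral $\int\sigma(\xi)e^{i\langle\xi,\log x\rangle}d\xi$ and applying stationary phase / nonstationary-phase integration by parts in the $N$ and $K$ variables, with symbol bounds tracked through each derivative in $\lambda$.
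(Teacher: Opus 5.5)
Your argument is correct, but it takes a genuinely different route from the paper.

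\textbf{The paper's route.} The paper gets the multiplier for $r<0$ from $L^2$-boundedness and Schur's lemma. It writes $m(\lambda)=\int_{G/K}k(z)\varphi_\lambda(z)\,dz$, which converges absolutely because $r<0$, and reaches $r\ge 0$ through $T=\Delta^N T'+R'$ (Lemma \ref{lem-Laplacian-trick}). It then proves the bounds directly on $G/K$:
\begin{itemize}
\item it cuts $k$ into dyadic shells in $z$ around $eK$ (Construction \ref{chopping-0});
\item it moves $\Delta^N$ from the eigenfunction $\varphi_\lambda$ onto $k_j$ and uses $|\varphi_\lambda|\le 1$ to get $|m_j(\lambda)|\lesssim\langle\lambda\rangle^{-M}2^{j(r+M)}$ (Lemma \ref{lem-estimate-m-j});
\item it sums geometric series split at $2^j\approx\langle\lambda\rangle$.
\end{itemize}
That argument is elementary and uses only pointwise kernel bounds, but the $\lambda$-derivative bounds are only sketched.

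\textbf{Your route.} You define $m=\HS(k)$ directly on $\mathcal{E}'(\GmodmodK)$. This removes the need for $L^2$ theory, Schur's lemma and the case split in $r$. You then transfer the singular part to $\ab$ via the Abel transform. There all derivative bounds come at once, and the top-order part of $m$ is tied to the restriction of the Euclidean symbol at $eK$ to $\ab^*\subset\p^*$. The price is the calculus of conormal distributions.

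\textbf{The step you flag as the main obstacle} is easier than you suggest and needs no stationary phase in $N$ or $K$ variables. The map $\Phi(H,Y)=\exp(H)\exp(Y)K$ is a global diffeomorphism $\ab\times\n\to G/K$, and $\exp:\n\to N$ carries $dY$ to Haar measure. Hence
\begin{equation*}
\A k_0(H)=e^{\rho(H)}\int_{\n}(k_0\circ\Phi)(H,Y)\,dY
\end{equation*}
is literally a linear fibre integral. Diffeomorphism invariance of conormal distributions, applied to your fact that $k_0\circ\exp$ has Fourier transform in $S^r(\p^*)$, gives $\widehat{v}\in S^r(\ab^*\times\n^*)$ for the compactly supported $v=k_0\circ\Phi$. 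Therefore
\begin{equation*}
\F\Bigl(\int_{\n}v(\cdot,Y)\,dY\Bigr)(\lambda)=\widehat{v}(\lambda,0)\in S^r(\ab^*).
\end{equation*}
Finally, the factor $e^{\rho}$, cut off to a compact set, acts by convolution with a Schwartz function on the Fourier side, and this preserves $S^r$.

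\textbf{Minor correction.} Say ``symbol in $S^r$'' rather than ``classical symbol'': the paper's Hörmander operators are not assumed polyhomogeneous.
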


The proof of Theorem \ref{thm-Hormander-implies-HC} will take up most of the rest of this section, and we will start with the case when the \emph{order $r<0$}. An important integral formula that will be required often in what follows is the one related to the $KAK$ decomposition: \begin{equation}\label{eqn-jacobian-KAK}
    \int_G f(g)\,dg=\int_K\int_{\ab^+}\int_K f(k_1 e^H k_2)\,\delta(H)\,dk_1\,dH\,dk_2
\end{equation} where \begin{equation}\label{eqn-Jacobian}
    \delta(H)=\prod_{\alpha\in \Sigma^+}|\sinh(\alpha(H))|^{\dim \g_\alpha}
\end{equation} is the Jacobian (\cite[Theorem 5.28]{Knapp-example}).

\begin{lemma}\label{lem-Hormander-implies-HC-r-less-0}
    Let $T:\csmooth(G/K)\rightarrow \csmooth(G/K)$ be a properly-supported, $G$-equivariant, Hörmander pseudodifferential operator of order $r<0$. Then $T$ is a Helgason-Fourier multiplier operator with a smooth multiplier function $m(\lambda)\in \smooth(\ab^*)^W$.
\end{lemma}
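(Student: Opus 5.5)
Since $T$ is properly supported and $G$-equivariant, Lemma \ref{lem-equivariant-kernel} gives an equivariant kernel $k\in\mathcal{D}'(\GmodmodK)$ with $Tf=k*f$. My first step is to show that $k$ is in fact compactly supported. Proper support of the Schwartz kernel $\mathcal{K}(x,y)=k(y^{-1}x)$ means the projection $\supp\mathcal{K}\cap(G/K\times\{eK\})\to G/K$ is proper. Hence $\{x: (x,eK)\in\supp\mathcal{K}\}=\supp k$ is compact in $G/K$, so $k\in\E'(\GmodmodK)$.

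The second step uses the order. Because $r<0$, the local theory shows that $k$ is locally integrable near $eK$: in normal coordinates $\exp:\p\to G/K$, the kernel behaves like $|X|^{-n-r}$ by Theorem \ref{thm-kernel-estimates-Euclidean} (with $N=0$), possibly with a logarithmic correction when $r=0$, which is excluded here. Since $-n-r>-n$, this is integrable. Away from $eK$ the kernel $k$ is smooth by Lemma \ref{lem-equivariant-kernel}. So $k$ is a compactly supported $L^1$ bi-$K$-invariant function. I would then define
$$m(\lambda)\coloneqq\HS(k)(\lambda)=\int_{G/K}k(x)\,\varphi_\lambda(x)\,dx .$$
This is absolutely convergent because $|\varphi_\lambda|\le\varphi_0$ is bounded. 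It is $W$-invariant by \eqref{eqn-spherical-function-symmetry}. It is smooth in $\lambda$ by differentiating under the integral using Lemma \ref{lem-spherical-functions}(3): the bound $|\partial^\alpha_\lambda\varphi_\lambda(x)|\le C_\alpha|x|^{|\alpha|}\varphi_0(x)$ is uniformly integrable against $|k|$ on the compact support. Alternatively, $k\in\E'$, so by the Paley--Wiener theorem \cite[Ch.~IV, Thm.~7.1]{HelgasonGGA} in its distributional form, $\HS k$ is entire on $\ab^*_\C$.

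The third step verifies the multiplier identity. For $f\in\csmooth(G/K)$, the function $Tf=k*f$ lies in $\csmooth(G/K)$. I would compute its Helgason--Fourier transform by Fubini:
$$\widetilde{Tf}(\lambda,b)=\int\!\!\int k(y^{-1}x)f(y)\,e_{\lambda,b}(x)\,dy\,dx .$$
Substituting $x=yz$ and using $e_{\lambda,b}(yz)=e_{\lambda,b}(y)\,e_{\lambda,\kIwasawa(y^{-1}b)}(z)$ (the cocycle identity for $\HIwasawa$), the inner integral becomes $\int k(z)\,e_{\lambda,b'}(z)\,dz$. This equals $\HS k(\lambda)$ because $k$ is left $K$-invariant, so averaging over $K$ turns $e_{\lambda,b'}$ into $\varphi_\lambda$. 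Therefore $\widetilde{Tf}=m\cdot\widetilde f$, and \eqref{eqn-Helgason-Fourier-inversion} then expresses $T$ in the form \eqref{eqn-multiplier-operator-GmodK}. Fubini is justified because $k\in L^1$ with compact support and $f$ is compactly supported.

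I expect the main obstacle to be the local integrability claim in the second step. It requires transferring Theorem \ref{thm-kernel-estimates-Euclidean} through the chart $\exp$ and checking that the $G$-equivariant kernel restricted near $eK$ is genuinely the Euclidean kernel of an order-$r$ operator, uniformly in the base point. To handle this, I would localize with a bi-$K$-invariant cutoff $\chi$ near $eK$. The piece $(1-\chi)k$ is then smooth and compactly supported. The piece $\chi k$ is, in exponential coordinates, the kernel $k_a(0,\cdot)$ of a Euclidean PSDO of order $r<0$, and it is controlled by $|z|^{-n-r}$.
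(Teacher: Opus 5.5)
Your argument is correct. For the central step it takes a different route from the paper.

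\textbf{What the paper does.} It first invokes $L^2$-boundedness of properly supported equivariant operators of negative order (\cite[Proposition 3.4.4]{DebelloHigson}). It then applies Schur's lemma to the multiplicity-free decomposition of $L^2(G/K)$ to obtain \emph{some} $W$-invariant $L^\infty$ multiplier $m$. Only afterwards does it identify $m(\lambda)=(T\varphi_{-\lambda})(e)=\int_{G/K}k(z)\varphi_\lambda(z)\,dz$. From that point, the paper's use of $r<0$ coincides with your second step: absolute convergence via $|k(e^H)|\lesssim|H|^{-n-r-N}$ and $\delta(H)\lesssim|H|^{n-a}$, and smoothness via Lemma \ref{lem-spherical-functions}(iii).

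\textbf{What you do differently.} You define $m\coloneqq\HS k$ outright. You then verify $\widetilde{Tf}=m\cdot\widetilde f$ directly by Fubini, the cocycle identity $e_{\lambda,b}(yz)=e_{\lambda,b}(y)\,e_{\lambda,\kIwasawa(y^{-1}b)}(z)$, and $K$-averaging against the left $K$-invariant $k$. This computation is valid.

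\textbf{What your route buys.} It is self-contained: it needs neither the external $L^2$ result nor direct-integral Schur theory. It also avoids the paper's rather informal step of applying $T$ to the non-$L^2$ function $\varphi_{-\lambda}$ and matching the result with an a.e.-defined multiplier, and it gives the identity for every $\lambda$. It also generalizes. If you run it with the distributional pairing $\langle k,e_{\lambda,b}\rangle$ for $k\in\E'(\GmodmodK)$, the same computation works for every order $r$, and $m$ is entire by Paley--Wiener. The Laplacian trick (Lemma \ref{lem-Laplacian-trick}) is then unnecessary for the multiplier half of Theorem \ref{thm-Hormander-implies-HC}.

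\textbf{What the paper's route buys.} It makes explicit the representation-theoretic reason equivariant operators are multipliers, namely irreducibility and multiplicity one of the $\pi_\lambda$.

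\textbf{Small fixes.}
\begin{enumerate}
\item Theorem \ref{thm-kernel-estimates-Euclidean} with $N=0$ requires $n+r>0$. For $r\le -n$, take $N$ slightly larger than $-n-r$. Also, the logarithmic case is $r=-n$, not $r=0$.
\item Treating $k$ near $eK$ as the $L^1$ function given by its values off $eK$ (that is, with no component supported at $eK$) deserves a line of justification. For $r<0$, the frequency-localized dyadic pieces of the local Euclidean kernel have $L^1$ norms $\lesssim 2^{jr}$, so their sum converges in $L^1_{\mathrm{loc}}$. The paper leaves this implicit too, and the distributional version of your argument sidesteps it entirely.
\end{enumerate}
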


\begin{proof}
    Since $T$ is a properly-supported, equivariant Hörmander PSDO of order $r<0$, it extends to a bounded operator on $L^2(G/K)$ (\cite[Proposition 3.4.4]{DebelloHigson}). An application of Schur's lemma (see \cite[Corollary 8.13]{Knapp-example}) (after recalling that $\pi_\lambda$'s are unitary irreducible representations of $G$) implies that $T$ is a multiplier operator, and viewing it through the Helgason-Fourier transform, the operator $T$ is given by the formula \begin{equation}\label{eqn-expression-multiplier-GmodK}
        Tf(x)=\frac{1}{|W|}\int_{\ab^*}\int_{K/M} m(\lambda)\,\widetilde{f}(\lambda,k)\,e_{-\lambda,k}(x)\,dk\,|\cfunc(\lambda)|^{-2}\,d\lambda
    \end{equation} for some $L^\infty$-function $m(\lambda)$ on $\ab^*$ that is $W$-invariant.

    The proper-supportedness of $T$ implies that its equivariant kernel $k$ is an element of $\E'(\GmodmodK)$, and hence, the expression \begin{equation}\label{eqn-expression-T-on-spherical-function}
        (T\varphi_{-\lambda})(e)=\int_{G/K} k(y^{-1})\,\varphi_{-\lambda}(y)\,dy=\int_{G/K}\, k(z)\,\varphi_{\lambda}(z)\,dz
    \end{equation} makes sense. Here, we use the symmetry property $\varphi_{-\lambda}(x^{-1})=\varphi_\lambda(x)$ in \eqref{eqn-spherical-function-symmetry}. However, since $T$ is a multiplier operator with the multiplier function $m(\lambda)$, a calculation using the inverse spherical transform in \eqref{eqn-spherical-transform-inversion} shows that the left-hand side in \eqref{eqn-expression-T-on-spherical-function} is equal to $m(\lambda)$. That is, \begin{align}\label{eqn-multiplier-expression-in-terms-kernel}\begin{split}
     m(\lambda)=(T\varphi_{-\lambda})(e)&\;\:=\;\,\int_{G/K}\,k(z) \varphi_{\lambda}(z)\,dz\\&\overset{KAK}{=}\frac{1}{|W|}\int_{\ab}\,k(e^H)\,\varphi_{\lambda}(e^{H})\,\delta(H)\,dH\,.
    \end{split}
    \end{align}
    Since we can think of the equivariant kernel as $k\in \E'(\ab)^W$, we will \emph{assume} without loss of generality that $\supp k\subset \{|H|<1\}$.
    It follows from the definition of equivariant Hörmander PSDOs on $G/K$, the proper-supportedness of $T$, the estimates in Theorem \ref{thm-kernel-estimates-Euclidean}, and the smoothness of $k$  away from the base point $eK$ (Lemma \ref{lem-equivariant-kernel}) that $|k(e^H)|\lesssim_N |H|^{-n-r-N}$ where $n=\dim G/K$ and for any $N\geq 0$ such that $n+r+N>0$. Furthermore, the Jacobian $\delta(H)$ in \eqref{eqn-Jacobian} of the $KAK$-decomposition satisfies the estimate $\delta(H)\lesssim |H|^{n-a}$ for $|H|< 1$, where $a=\dim \ab$. So, the absolute value of the integrand in the right-hand side of \eqref{eqn-multiplier-expression-in-terms-kernel} is dominated by \begin{equation}
    \varphi_0(e^H)\cdot|H|^{-n-r-N}\cdot |H|^{n-a}=\varphi_0(e^H)\cdot |H|^{-a-r-N}\overset{(\because \,|H|<1)}{\lesssim} |H|^{-a-r-N}\,.
    \end{equation} For $r<0$, we can always choose $N\geq 0$ such that $a+r+N<a$ and $n+r+N>0$, since $N$ needs to satisfy $-n-r<N<-r$. This shows that the integral in \eqref{eqn-multiplier-expression-in-terms-kernel} converges absolutely.
    
    Now, for smoothness of $m(\lambda)$, one can differentiate under the integral sign in \eqref{eqn-multiplier-expression-in-terms-kernel} i.e.,
    \begin{equation}\label{eqn-multiplier-expression-in-terms-kernel-derivative}
        \partial_\lambda^\alpha m(\lambda)=\frac{1}{|W|}\int_{\ab}\,k(e^H)\,\partial_\lambda^\alpha\varphi_{\lambda}(e^{H})\,\delta(H)\,dH\,.
    \end{equation} Now, Lemma \ref{lem-spherical-functions}$.(\emph{iii})$ says that $|\partial_\lambda^\alpha \varphi_\lambda(e^H)|\lesssim_\alpha|H|^{|\alpha|} \cdot\varphi_0(e^H)\lesssim_\alpha |H|^{|\alpha|}$ for $|H|<1$, and hence, the integral in \eqref{eqn-multiplier-expression-in-terms-kernel-derivative} converges absolutely. This shows that $m(\lambda)$ is a smooth $W$-invariant function on $\ab^*$.
\end{proof}  
Reaching the same conclusion for $r\geq 0$ presents a similar challenge to the one in the Euclidean case (see paragraph after Theorem \ref{thm-kernel-PSDO-equivalence-Euclidean}). However, the following lemma, which is a folklore trick using the Laplace-Beltrami operator, can be used to get around it.

\begin{lemma}[Laplacian Trick]\label{lem-Laplacian-trick}
    Let $T$ be a properly-supported, Hörmander PSDO of order $r$. For all $N\in \N$, there exists a properly-supported, Hörmander PSDO $T'$ of order $r-2N$ such that $T-\Delta^N\circ T'$ is a properly-supported smoothing \footnote{A smoothing operator on a smooth manifold $M$ will mean an integral operator with a smooth kernel $\mathcal{K}\in \smooth(M\times M)$.} operator where $\Delta$ is the Laplace-Beltrami operator.
\end{lemma}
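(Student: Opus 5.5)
The plan is to build a parametrix for $\Delta^N$ and compose. The Laplace-Beltrami operator $\Delta$ is an elliptic differential operator of order $2$, so $\Delta^N$ is elliptic of order $2N$. Its principal symbol is $(-1)^N|\xi|_g^{2N}$, which is invertible on $T^*M\setminus\{0\}$.

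\textbf{Step 1: a properly-supported parametrix.} By the standard parametrix construction (\cite[Theorem 18.1.24]{Hormander-III}), there is a properly-supported Hörmander PSDO $Q$ of order $-2N$ with
\begin{equation*}
\Delta^N\circ Q=I-R,
\end{equation*}
where $R$ is a smoothing operator. I would construct $Q$ locally on charts. In each chart the symbol is obtained by inverting the principal symbol away from $\xi=0$ and then correcting the lower order terms iteratively. The pieces are glued with a partition of unity, and the Schwartz kernel is multiplied by a cut-off equal to $1$ near the diagonal and supported in a proper neighbourhood of it; this cut-off is what makes $Q$ properly supported. Since $\Delta^N$ is a differential operator, it is properly supported. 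Hence $\Delta^N Q$ is properly supported, and therefore so is $R=I-\Delta^N Q$.

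\textbf{Step 2: compose with $T$.} Set $T'\coloneqq Q\circ T$. Properly-supported Hörmander PSDOs form an algebra under composition, with orders adding (\cite[Theorem 18.1.8]{Hormander-III}, transported to manifolds). So $T'$ is a properly-supported PSDO of order $r-2N$. Then
\begin{equation*}
T-\Delta^N\circ T'=(I-\Delta^N Q)\circ T=R\circ T.
\end{equation*}
The composition of a properly-supported smoothing operator with a properly-supported PSDO is again smoothing and properly supported. Its kernel is obtained by applying the transpose of $T$ in the second variable of the smooth kernel of $R$, and that preserves smoothness. This gives the lemma.

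\textbf{Equivariance.} For the later application to $G$-equivariant $T$ one would also want $T'$ to be $G$-equivariant. This is not part of the statement. If needed, I would choose $Q$ equivariant: take the kernel cut-off to be a function of the Riemannian distance $d(x,y)$, which is $G$-invariant, and either average over $K$ or build the symbol from the $G$-invariant metric. The hardest step is Step 1, specifically keeping the parametrix properly supported while keeping the remainder smoothing. The distance cut-off handles this, because truncating a PSDO kernel away from the diagonal only changes it by a smoothing operator.
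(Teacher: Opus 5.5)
Your proposal is correct and matches the paper's proof. Both take a properly-supported parametrix $Q_N$ of order $-2N$ for the elliptic operator $\Delta^N$ from \cite[Theorem 18.1.24]{Hormander-III}, set $T'=Q_N\circ T$, and conclude that $T-\Delta^N\circ T'=R_N\circ T$ is properly-supported smoothing, where $R_N=I-\Delta^N\circ Q_N$. Your remark about choosing $Q_N$ to be $G$-equivariant goes beyond the paper's proof, which does not address this even though equivariance of $T'$ is used right afterwards, so it is a useful addition rather than a deviation.
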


\begin{proof}
    Let $N\in \N$. Since $\Delta^N$ is an elliptic differential operator, it admits a parametrix $Q_N$ that can be chosen to be a properly-supported, Hörmander PSDO of order $-2N$ such that $$\Delta^N\circ Q_N + R_N =I$$ where $R_N$ is a properly-supported smoothing operator for all $N\geq 0$ (\cite[Theorem 18.1.24]{Hormander-III}). So, $T$ can be written as \begin{equation}\label{eqn-T-in-terms-Laplacian}
        T= \Delta^N \circ (Q_N \circ T) + R_N\circ T\,.
    \end{equation} The operator $T'\coloneq Q_N\circ T$ satisfies the desired conclusions of the lemma.
\end{proof}

Now, returning to a properly-supported, $G$-equivariant Hörmander PSDO $T$ of order $r\geq 0$, we choose $N$ large enough such that $r-2N<0$. Then, by Lemma \ref{lem-Laplacian-trick}, there exists a properly-supported, $G$-equivariant, Hörmander PSDO $T'$ of order $r-2N<0$ such that $R'\coloneq T-\Delta^N\circ T'$ is a properly-supported, $G$-equivariant smoothing operator. By Lemma \ref{lem-Hormander-implies-HC-r-less-0}, $T'$ and $R'$ are Helgason-Fourier multiplier operators with smooth multiplier functions, and it follows from Lemma \ref{lem-spherical-functions}.$(\emph{i})$ that $\Delta^N$ is a Helgason-Fourier multiplier operator with the multiplier function $(-1)^N(|\lambda|^2+|\rho|^2)^N$. This proves that any properly-supported, equivariant Hörmander PSDO $T$ of order $r$ is a Helgason-Fourier multiplier operator with a smooth multiplier function, which is the first part of Theorem \ref{thm-Hormander-implies-HC}. 

Before we proceed to the proof of the second part of Theorem \ref{thm-Hormander-implies-HC}, we need to introduce a construction and a lemma. 
\begin{construction}\label{chopping-0}
        We decompose the punctured ball $\{z\in G/K\,:0<|z|\leq 1\}$ into dyadic annuli $\{2^{-j-1}\leq |z|\leq 2^{-j+1}\}$ for all $j\in \N$. There exists a smooth left-$K$-invariant partition of unity $\{\eta_j:G/K\rightarrow \R\}_{j\in \N}$ of the punctured ball such that $\supp \eta_j\subset \{2^{-j-1}\leq |z|\leq 2^{-j+1}\}$ and $\sum_{j=1}^\infty \eta_j(z)=1$ for all $z\neq e$. Using this partition of unity, let us define the functions
        \begin{equation}
            k_j(z)\coloneqq \eta_j(z)\cdot k(z)\quad \text{ and }\quad m_j(\lambda)\coloneqq(\HS k_j)(\lambda) \quad \text{for all }j\in \N.
        \end{equation}  

\end{construction}
\begin{lemma}\label{lem-estimate-m-j}
        For all $M\in \mathbb{N}\cup\{0\}$ such that $n+r+M>0$, there exists a constant $C_M>0$ such that $$|m_j(\lambda)|\leq C_M\,\langle \lambda\rangle^{-M}\,2^{j(r+M)}\,.$$
    \end{lemma}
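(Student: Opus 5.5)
We estimate $m_j(\lambda)=\HS k_j(\lambda)=\int_{G/K}k_j(z)\,\varphi_\lambda(z)\,dz$ directly. Each $k_j$ is smooth, bi-$K$-invariant and supported in the annulus $\{2^{-j-1}\leq|z|\leq 2^{-j+1}\}$. It inherits the kernel bounds from the Euclidean theory via charts, exactly as in the proof of Lemma \ref{lem-Hormander-implies-HC-r-less-0}: for every invariant differential operator $D$ of order $d$ and every $N\ge0$ with $n+r+d+N>0$, we have $|Dk(z)|\lesssim|z|^{-n-r-d-N}$ for $0<|z|\leq 1$. Since $D(\eta_j)$ is bounded by $2^{jd'}$ for pieces of order $d'$, Leibniz gives $|\Delta^{p}k_j(z)|\lesssim 2^{j(n+r+2p)}$ on the annulus, where we use the sharp choice $N=0$ when $n+r>0$ and small admissible $N$ otherwise. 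We get the $\langle\lambda\rangle^{-M}$ decay by integrating by parts with the Laplacian, using Lemma \ref{lem-spherical-functions}(i).

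\textbf{Step 1 (even $M=2p$).} Since $\Delta\varphi_\lambda=-(|\lambda|^2+|\rho|^2)\varphi_\lambda$ and $\Delta$ is formally self-adjoint, and $k_j$ is compactly supported away from $e$,
\begin{equation*}
m_j(\lambda)=(-1)^p(|\lambda|^2+|\rho|^2)^{-p}\int_{G/K}(\Delta^p k_j)(z)\,\varphi_\lambda(z)\,dz.
\end{equation*}
Use $|\varphi_\lambda|\le\varphi_0\le1$ and $(|\lambda|^2+|\rho|^2)^{-p}\lesssim\langle\lambda\rangle^{-2p}$, which holds because $|\rho|>0$. The volume of the annulus is $\lesssim 2^{-jn}$, either in geodesic polar coordinates or via the $KAK$ formula \eqref{eqn-jacobian-KAK} with $\delta(H)\lesssim|H|^{n-a}$. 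This gives $|m_j(\lambda)|\lesssim\langle\lambda\rangle^{-2p}\,2^{j(n+r+2p)}2^{-jn}=\langle\lambda\rangle^{-M}2^{j(r+M)}$.

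\textbf{Step 2 (odd $M$ and the admissibility constraint).} For odd $M=2p+1$ we interpolate between the even bounds. Compare $\langle\lambda\rangle$ with $2^j$. If $\langle\lambda\rangle\le 2^j$, the $2p$ bound already implies the $M$ bound, since $\langle\lambda\rangle^{-2p}2^{j(r+2p)}\le\langle\lambda\rangle^{-M}2^{j(r+M)}$. If $\langle\lambda\rangle\ge2^j$, the $2p+2$ bound implies it in the same way. So the $M$ bound follows from the two neighbouring even bounds. The same comparison handles any $M$ with $n+r+M>0$: bounds for larger $M$ imply those for smaller $M$ when $\langle\lambda\rangle\ge 2^j$, and conversely when $\langle\lambda\rangle\le2^j$.

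\textbf{Main obstacle.} The delicate point is the case $M=0$ with $n+r\le0$ small, or generally when $n+r+2p$ is near the threshold. There the pointwise kernel bound needs $N>0$, which loses a factor $2^{jN}$. This can be absorbed because the kernel estimate for $|z|\sim2^{-j}$ may be applied with any admissible $N$, and on a dyadic annulus $|z|^{-N}\sim 2^{jN}$ only helps when $N$ is chosen so that $n+r+2p+N$ matches exactly. The cleanest fix is to prove the even bounds only for $2p$ with $n+r+2p>0$, where $N=0$ is allowed, and then use the comparison in Step 2 to reach every admissible $M$. One also has to verify carefully that the Euclidean symbol estimates of Theorem \ref{thm-kernel-estimates-Euclidean} transfer to $\Delta^p$ applied to the kernel in normal coordinates. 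This holds because $\Delta^p\circ T$ is again a properly supported equivariant Hörmander PSDO, of order $r+2p$, so its kernel $\Delta^pk$ satisfies the order-$(r+2p)$ estimates with no derivatives needed.
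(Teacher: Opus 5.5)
\textbf{Where your proof works.} When $n+r>0$, i.e.\ $r>-n$, your argument is correct and somewhat cleaner than the paper's. The even case uses the same idea as the paper: move $\Delta^p$ onto $k_j$ and use $\Delta\varphi_\lambda=p_\Delta(\lambda)\varphi_\lambda$. But you actually expand $\Delta^p(\eta_j k)$ by Leibniz. You use $|\partial^\alpha\eta_j|\lesssim 2^{j|\alpha|}$, which holds for a rescaled partition $\eta_j=\theta(2^j|z|)$. You also use $|\partial^\beta k|\lesssim |z|^{-n-r-|\beta|}$, the $N=0$ case of Theorem~\ref{thm-kernel-estimates-Euclidean}, which is legitimate for every $\beta$ exactly because $n+r>0$. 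The paper instead bounds $|\Delta^N k_j|$ by $|z|^{-n-r-2N}$, as though $\Delta^N k_j=\eta_j\Delta^N k$. For odd $M$, the paper uses a Green's-formula step, $p_\Delta(\lambda)m_j=-\langle\nabla k_j,\nabla\varphi_{-\lambda}\rangle$ with $|\nabla\varphi_{-\lambda}|\lesssim\langle\lambda\rangle$, followed by an unexplained ``interpolation''. You replace both with the observation that the right-hand side equals $2^{jr}(2^j/\langle\lambda\rangle)^M$, which is geometric in $M$. It therefore dominates the bound at $M-1$ when $\langle\lambda\rangle\le 2^j$, and the bound at $M+1$ when $\langle\lambda\rangle\ge 2^j$. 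This is fully rigorous and needs no gradient estimate for $\varphi_\lambda$.

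\textbf{Where it fails.} Your treatment of $n+r\le 0$ does not work. Even when $n+r+2p>0$, the Leibniz expansion of $\Delta^p(\eta_j k)$ contains terms $(\partial^\alpha\eta_j)(\partial^\beta k)$ with $n+r+|\beta|\le 0$. For these, $\partial^\beta k$ is only bounded (or logarithmic) near $e$ and need not vanish there. Such a term is then of size about $2^{j|\alpha|}$, not $O(2^{j(n+r+2p)})$. Your closing remark, that $\Delta^p k$ satisfies the order-$(r+2p)$ estimates, controls $\eta_j\Delta^p k$ but not $\Delta^p k_j$. Moreover, an odd $M$ with $0<n+r+M\le 1$ has no admissible even neighbour below it, so your Step~2 cannot reach the region $\langle\lambda\rangle\le 2^j$.

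\textbf{The statement itself is false for $r<-n$.} No repair is possible in that range. Take $k\in\csmooth(\GmodmodK)$ with $k\ge 0$ and $k(e)=1$. Convolution with $k$ is a properly supported equivariant smoothing operator, hence of every order $r$. For $|\lambda|=\epsilon 2^j$, Lemma~\ref{lem-spherical-functions}(iii) gives $|\varphi_\lambda-\varphi_0|\lesssim|\lambda||z|\lesssim\epsilon$ on $\supp\eta_j$. So for small fixed $\epsilon$ and large $j$,
\[
|m_j(\lambda)|\ge\operatorname{Re} m_j(\lambda)\gtrsim\int\eta_j\gtrsim 2^{-jn}.
\]
The claimed bound there is at most $C_M\epsilon^{-M}2^{jr}$, which is $\ll 2^{-jn}$. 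The paper's proof has the same hidden flaw, both in the even step and in the Green's-formula step, where $\nabla k_j$ contains the term $k\nabla\eta_j$. The lemma should therefore be restricted to $r>-n$, where your proof is complete. Its use in Theorem~\ref{thm-Hormander-implies-HC} for $r\le -n$ then needs a separate reduction.
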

    \begin{proof}
        Remember that the Laplace-Beltrami operator $\Delta$ on $G/K$ is essentially self-adjoint, and satisfies the eigenvalue equations $\Delta\varphi_\lambda=-(|\lambda|^2+|\rho|^2)\,\varphi_\lambda$ (Lemma \ref{lem-spherical-functions}.$(\emph{i})$). We will denote $p_\Delta(\lambda)\coloneqq-(|\lambda|^2+|\rho|^2)$, and write \begin{align}\label{eqn-estimate-m-j-lemma-1}
            \begin{split}
            \left(p_\Delta(\lambda)\right)^N\,m_j(\lambda)&=\int_{G/K} k_j(z)\,\Delta^N \varphi_\lambda(z)\,dz=\langle k_j,\Delta^N \varphi_{-\lambda}\rangle=\langle \Delta^N k_j,\varphi_{-\lambda}\rangle\,.
            \end{split}
        \end{align}
        If we choose $N\in \mathbb{N}\cup\{0\}$ such that $n+r+2N>0$, then the estimates in Theorem \ref{thm-kernel-estimates-Euclidean} and the definition of equivariant Hörmander PSDOs on $G/K$ using local charts imply that $|\Delta^Nk(z)|\lesssim_N |z|^{-n-r-2N}$ for $|z|<1$.  Hence, \begin{align}\label{eqn-estimate-m-j-lemma}
            \begin{split}
            |\left(p_\Delta(\lambda)\right)^N\,m_j(\lambda)|&\;\:\lesssim_N\int_{2^{-j-1}\leq |z|\leq 2^{-j+1}} |\Delta^Nk_j(z)|\,dz\\&\overset{KAK}{=}\int_{2^{-j-1}\leq |H|\leq 2^{-j+1}} |\Delta^Nk_j(H)|\,\delta(H)\,dH\lesssim_N 2^{j(r+2N)}\,.
            \end{split}
        \end{align}Here, we use the fact that the Jacobian in \eqref{eqn-Jacobian} satisfies $\delta(H)\lesssim |H|^{n-a}$ for $|H|<1$. This completes the proof for non-negative even integers $M=2N$. 
        
        For odd integers $M$, it is enough to show it for $M=1$ i.e., assume that $n+r+1>0$. We will use Green's formula for Riemannian manifolds on $G/K$ (see the proof of \cite[Lemma 16]{Anker90}) for this analysis. With $N=1$ in \eqref{eqn-estimate-m-j-lemma-1}, we can rewrite it as  \begin{equation}\label{eqn-Green-formula}
            p_\Delta(\lambda)m_j(\lambda)=\langle k_j,\Delta \varphi_{-\lambda}\rangle\overset{\text{Green's formula}}{=}-\langle\nabla k_j, \nabla \varphi_{-\lambda}\rangle
        \end{equation} where $\nabla$ is the gradient vector field on $G/K$. Furthermore, Lemma \ref{lem-spherical-functions}.(\emph{ii}) says that $|\nabla \varphi_{-\lambda}(z)|\lesssim \langle \lambda\rangle$, and the kernel estimates in Theorem \ref{thm-kernel-estimates-Euclidean} imply that $|\nabla k(z)|\lesssim |z|^{-n-r-1}$. An identical analysis to \eqref{eqn-estimate-m-j-lemma} using \eqref{eqn-Green-formula} shows that $$|m_j(\lambda)|\lesssim \langle\lambda\rangle^{-1}\cdot 2^{j(r+1)}\,.$$ So, using interpolation, the lemma follows for odd integers $M$ whenever $n+r+M>0$, and hence, for all $M\in \mathbb{N}\cup\{0\}$ such that $n+r+M>0$.
    \end{proof}
    Both Construction \ref{chopping-0} and Lemma \ref{lem-estimate-m-j} are inspired by the proof of Theorem \ref{thm-kernel-estimates-Euclidean} in \cite[Chapter VI.4, Proposition 1]{Stein93-Real-Variable-Methods}.

\begin{proof}[Proof of Theorem \ref{thm-Hormander-implies-HC}]
    All that remains to be proved are the estimates in Definition \ref{defn-harish-chandra-symbol} for the smooth multiplier function $m(\lambda)$. However, thanks to the Laplacian trick in Lemma \ref{lem-Laplacian-trick}, it is enough to prove the estimates for the multiplier functions $m(\lambda)$ corresponding to equivariant Hörmander PSDOs of order $r<0$.

    So, \emph{fix} $r<0$. Since $T$ is properly-supported, it can be assumed that $\supp k\subset \{|z|<1\}$, and we need to check the estimates in Definition \ref{defn-harish-chandra-symbol} for $\langle \lambda\rangle\geq 1$. So, the expression in \eqref{eqn-multiplier-expression-in-terms-kernel} becomes \begin{equation}\label{eqn-breaking-the-kernel}
        m(\lambda)=\underbrace{\int_{|z|\leq \langle \lambda\rangle^{-1}} k(z)\varphi_\lambda(z)\,dz}_{\Circled{I}}+\underbrace{\int_{\langle \lambda\rangle^{-1}<|z|\leq 1} k(z)\varphi_\lambda(z)\,dz}_{\Circled{II}}
    \end{equation} after decomposing the integral over $G/K$ into two parts using the distance from $eK$ function $|\cdot|_{G/K}$ on $G/K$. Following Construction \ref{chopping-0}, we can estimate $|\Circled{I}|$ by the (infinite) sum $\sum_{2^j>\langle \lambda\rangle}|m_j(\lambda)|$; and estimate $|\Circled{II}|$ by the (finite) sum $\sum_{2^j<\langle \lambda\rangle}|m_j(\lambda)|$. We will estimate these two sums separately.
    \begin{itemize}
        \item For estimating $\Circled{I}$, we will \emph{fix} $M\in \mathbb{N}\cup\{0\}$ such that $n+r+M>0$ and $r+M<0$. This is always possible since $r<0$, and we need $M$ to satisfy $-n-r<M<-r$ \footnote{Note that $\operatorname{dim}(G/K)=n\geq 2$.}. Then \begin{align}\label{eqn-I-preliminary}
            \begin{split}
            |\Circled{I}|\lesssim \sum_{2^j>\langle \lambda\rangle} |m_j(\lambda)|&\overset{\text{Lemma }\eqref{lem-estimate-m-j}}{\lesssim} \sum_{2^j>\langle \lambda\rangle}\langle \lambda\rangle^{-M} \,2^{j(r+M)}\\&\;\;\;
            \overset{(r+M<0)}{\lesssim}\langle \lambda\rangle^{-M}\langle \lambda\rangle^{r+M}=\langle \lambda\rangle^{r},
            \end{split}
        \end{align}
        where the last inequality follows from estimating the sum using the (infinite) geometric series formula.
        \item For estimating $\Circled{II}$, we will \emph{fix} $M\geq 0$ such that $r+M>0$. Then \begin{align}\label{eqn-II-preliminary}
            \begin{split}
            |\Circled{II}|\lesssim \sum_{2^j<\langle \lambda\rangle} |m_j(\lambda)|&\overset{\text{Lemma }\eqref{lem-estimate-m-j}}{\lesssim} \sum_{2^j<\langle \lambda\rangle}\langle \lambda\rangle^{-M} \,2^{j(r+M)}\\&\;\;\;
            \overset{(r+M>0)}{\lesssim}\langle \lambda\rangle^{-M}\langle \lambda\rangle^{r+M}=\langle \lambda\rangle^{r}
            \end{split}
        \end{align} by estimating the sum using the (finite) geometric series formula.
    \end{itemize}
    So, \eqref{eqn-I-preliminary} and \eqref{eqn-II-preliminary} applied to \eqref{eqn-breaking-the-kernel} complete the estimate for $m(\lambda)$. The estimates in Definition \ref{defn-harish-chandra-symbol} for the $\lambda$-derivatives of $m(\lambda)$ can be proved in a similar fashion to the argument at the end of the proof of Lemma \ref{lem-Hormander-implies-HC-r-less-0}. This completes the proof of Theorem \ref{thm-Hormander-implies-HC}.
\end{proof}

A related result is proved in \cite{DebelloHigson} except that only the principal symbol is considered there. In contrast, Theorem \ref{thm-Hormander-implies-HC} shows that properly-supported, $G$-equivariant Hörmander pseudodifferential operators on $G/K$ have \emph{complete} symbol functions, which are Harish-Chandra symbol functions $m(\lambda)$ as in Definition \ref{defn-harish-chandra-symbol}. We will state a mild upgrade of Theorem \ref{thm-Hormander-implies-HC} up next. If we add the assumption of the $G/K$-version of ``rapid-off-diagonal-decay" (see Definition \ref{defn-rapid-off-diagonal-decay} for the precise definition) to the equivariant kernel $k$ of an equivariant Hörmander PSDO, similar to the one in Theorem \ref{thm-kernel-estimates-Euclidean}, then we can drop the proper-supportedness condition on $T$ in Theorem \ref{thm-Hormander-implies-HC}, and still conclude that $T$ is a multiplier operator with a Harish-Chandra symbol function of order $r$. The naturality and the necessity of this extra hypothesis of rapid off-diagonal decay in Theorem \ref{thm-Hormander-implies-HC-upgrade} will become clear in Theorem \ref{thm-HC-implies-hormander}.

\begin{definition}\label{defn-rapid-off-diagonal-decay}
    A distribution $u\in \mathcal{D}'(\GmodmodK)$ is in $\HCschwartz(\GmodmodK)$ \emph{away from the diagonal} if there exists $\chi\in C_c^\infty(\GmodmodK)$ with uniformly bounded derivatives such that $(1-\chi)\cdot u\in \HCschwartz(\GmodmodK)$.
\end{definition}

\begin{theorem}\label{thm-Hormander-implies-HC-upgrade}
    Let $T:\csmooth(G/K)\rightarrow \smooth(G/K)$ be an equivariant, Hörmander pseudodifferential operator of order $r$ whose equivariant kernel $k$ (as in \eqref{eqn-convolution-kernel-operator-GmodK}) is an element of $\HCschwartz(\GmodmodK)$ away from the diagonal. Then $T$ is a multiplier operator with respect to the Helgason-Fourier transform, with the multiplier $m(\lambda)\in S^{r,G}_{\operatorname{HC}}(G/K)$.
\end{theorem}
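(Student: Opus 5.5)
The plan is to split the equivariant kernel with the cutoff from Definition \ref{defn-rapid-off-diagonal-decay}: write $k=\chi k+(1-\chi)k$, where $\chi\in\csmooth(\GmodmodK)$ and $(1-\chi)k\in\HCschwartz(\GmodmodK)$. The two pieces are then handled by results already available. The first piece $k_0\coloneqq\chi k$ is a compactly supported bi-$K$-invariant distribution. It is smooth away from $eK$ by Lemma \ref{lem-equivariant-kernel}, and near $eK$ it has the same singularity as $k$. The second piece $k_\infty\coloneqq(1-\chi)k$ lies in $\HCschwartz(\GmodmodK)$.

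For $k_\infty$, Example \ref{example-convolution-schwartz} together with Theorem \ref{thm-Schwartz-algebra-characterization} shows that convolution with $k_\infty$ is a Helgason-Fourier multiplier with multiplier $\HS k_\infty\in\schwartz(\ab^*)^W$. This multiplier lies in $S^{s,G}_{\operatorname{HC}}(G/K)$ for every $s$, and in particular for $s=r$. Convolution with $k_\infty$ is also a smoothing operator, because its kernel $k_\infty(y^{-1}x)$ is smooth on $G/K\times G/K$. Hence $T_0\coloneqq T-(\text{convolution with }k_\infty)$ is still a $G$-equivariant Hörmander PSDO of order $r$, since smoothing operators have every order. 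Its equivariant kernel is $k_0$, which has compact support, so $T_0$ is properly supported: the support of its Schwartz kernel is $\{(x,y): y^{-1}x\in\supp k_0\}$, and this set has proper projections because $\supp k_0$ is compact and $G$ acts properly on $G/K$. We also need $T_0$ to map $\csmooth(G/K)\to\csmooth(G/K)$, which follows from the same compact support. Theorem \ref{thm-Hormander-implies-HC} then applies to $T_0$ and produces a multiplier $m_0\in S^{r,G}_{\operatorname{HC}}(G/K)$.

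Adding the two pieces, $T$ acts on $\csmooth(G/K)$ as the Helgason-Fourier multiplier with $m=m_0+\HS k_\infty$. Both terms are smooth and $W$-invariant and satisfy the order-$r$ symbol estimates, so $m\in S^{r,G}_{\operatorname{HC}}(G/K)$. The identity itself comes from evaluating on spherical functions as in \eqref{eqn-multiplier-expression-in-terms-kernel}, $m(\lambda)=\langle k,\varphi_\lambda\rangle$. Each pairing makes sense: $k_0$ is compactly supported, and $k_\infty$ is Harish-Chandra Schwartz while $\varphi_\lambda$ is bounded by $\varphi_0$. With that, \eqref{eqn-expression-multiplier-GmodK} holds by linearity.

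I expect the main obstacle to be the bookkeeping showing that $T_0$ really is a properly supported Hörmander PSDO of order $r$. This needs two checks. First, $\chi k$ agrees with $k$ near $eK$, which requires $\chi\equiv 1$ near $eK$. If Definition \ref{defn-rapid-off-diagonal-decay} does not guarantee this, I would replace $\chi$ by $\chi+\psi(1-\chi)$ for a bump $\psi\in\csmooth(\GmodmodK)$ equal to $1$ near $eK$. This replacement changes $(1-\chi)k$ only by a compactly supported smooth function, because $k$ is smooth off $eK$, so the difference stays in $\HCschwartz(\GmodmodK)$. Second, the remaining local symbol estimates hold in charts, since multiplying the kernel by a smooth function of $y^{-1}x$ preserves the Hörmander class. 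Once these are verified, the rest is a direct appeal to the earlier theorems.
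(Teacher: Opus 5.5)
Your proposal is correct and is essentially the paper's proof. You split $k$ into a compactly supported piece, which gives a properly supported equivariant Hörmander PSDO handled by Theorem \ref{thm-Hormander-implies-HC}, and a piece in $\HCschwartz(\GmodmodK)$ handled by Example \ref{example-convolution-schwartz}, then add the two multipliers. The paper instead uses a radial cutoff between radii $1$ and $2$; taking the cutoff directly from Definition \ref{defn-rapid-off-diagonal-decay}, and noting that $T_0$ differs from $T$ by a smoothing operator, just makes explicit two points the paper passes over quickly.
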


\begin{proof}
    Let $k=k_0+k_\infty$ such that $k_0$ is supported in $\{|z|\leq 2\}$ and $k_\infty$ is supported in $\{|z|\geq 1\}$. Now, the operator $T$ can be written as $T=T_0+T_\infty$, where $T_0$ and $T_\infty$ are convolution operators with kernels $k_0$ and $k_\infty$ respectively. Naturally, $T_0$ is a properly-supported, $G$-equivariant, Hörmander pseudodifferential operator of order $r$, and hence, a multiplier operator with a Harish-Chandra symbol function of order $r$ due to Theorem \ref{thm-Hormander-implies-HC}. On the other hand, thanks to the discussion in Section \ref{subsec-Hormander-PSDO}, $k_\infty\in \smooth(\GmodmodK)$, and in particular, $k_\infty\in \HCschwartz(\GmodmodK)$. It follows from Example \ref{example-convolution-schwartz} that $T_\infty$ is a multiplier operator with a Harish-Chandra symbol function in $\schwartz(\ab^*)^W$. So, $T=T_0+T_\infty$ is a multiplier operator with respect to the Helgason-Fourier transform, with a Harish-Chandra symbol function of order $r$.
\end{proof}

\section{Equivariant Harish-Chandra PSDOs on $G/K$}

This brings us to the primary goal of the paper --- to prove the exact converse to Theorem \ref{thm-Hormander-implies-HC-upgrade} i.e., to show that multiplier operators with respect to the Helgason-Fourier transform associated to Harish-Chandra symbol functions are equivariant Hörmander PSDOs on $G/K$ with their equivariant kernel $k$ (as in \eqref{eqn-convolution-kernel-operator-GmodK}) in $\HCschwartz(\GmodmodK)$ away from the diagonal. To do so, we will introduce the notion of equivariant Harish-Chandra pseudodifferential operators on $G/K$, and then prove the main technical result of this paper that deals with kernel estimates of equivariant Harish-Chandra PSDOs (Theorem \ref{thm-kernel-estimates-G/K}). 

\subsection{Definitions and main results}

\begin{definition}[Equivariant Harish-Chandra PSDOs]\label{defn-harish-chandra-PSDO}
    A $G$-equivariant operator $T_m:\csmooth(G/K)\to \smooth(G/K)$ will be called an \emph{equivariant Harish-Chandra pseudodifferential operator} (or simply, an \emph{equivariant Harish-Chandra PSDO}) of order $r$ if it is a multiplier operator with a Harish-Chandra symbol function $m(\lambda)$ of order $r$ with respect to the Helgason-Fourier transform i.e., $T_m$ is given by the expression in \eqref{eqn-expression-multiplier-GmodK}. We shall denote by $\Psi^{r,G}_{\text{HC}}(G/K)$ the set of all equivariant Harish-Chandra PSDOs of order $r$ \footnote{The Fréchet space structure on $\Psi^{r,G}_{\text{HC}}(G/K)$ comes from the Fréchet space $S^{r,G}_{\text{HC}}(G/K)$.}.
\end{definition}

As discussed in Section \ref{sec-equivariant-psdos-GmodK}, every continuous equivariant operator on $G/K$ has an equivariant kernel $k$ (Lemma \ref{lem-equivariant-kernel}) such that $(Tf)(x)=\int_{G/K}k(y^{-1}x)\,f(y)\,dy$. In the case of an equivariant Harish-Chandra PSDO $T_m$, the equivariant kernel $k$ is given by the inverse spherical transform \eqref{eqn-spherical-transform-inversion} of $m$ i.e., \begin{equation}
    k=\HS^{-1}m.
\end{equation} The estimates in Definition \ref{defn-harish-chandra-symbol} ensure that $m\in \schwartz'(\ab^*)^W$, and hence, $k=\HS^{-1}m\in \HCschwartz'(\GmodmodK)$ (as in Section \ref{subsec-schwartz-space}).
The following theorem, which is the main technical result on which the paper relies, can now be stated.

\begin{theorem}\label{thm-kernel-estimates-G/K}
    Let $m\in S^{r,G}_{\operatorname{HC}}(G/K)$, and let $T_m$ be the corresponding equivariant Harish-Chandra PSDO in $\Psi^{r,G}_{\operatorname{HC}}(G/K)$. The equivariant kernel $k(z)$ of the operator $T_m$ is smooth for $z\neq e$. Moreover, for all $N\geq 0$ such that $n+r+d_1+d_2+N>0$, \begin{equation}
    |k(D_1;z;D_2)|\lesssim_{N,D_1,D_2}\, \varphi_0(z)\cdot |z|^{-n-r-d_1-d_2-N} \tag{$*_{r}$}
    \end{equation} for $z\neq 0$ where $D_1,D_2\in U(\g)$ and $d_i=\deg D_i$ for $i=1,2$, and $n=\dim G/K$.
\end{theorem}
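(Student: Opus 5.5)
The plan is to mimic the dyadic proof of Theorem \ref{thm-kernel-estimates-Euclidean}, but to carry out the oscillatory estimates on $\ab$ through the Abel transform, as in Anker's approach.

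\textbf{Reductions.} First, the derivatives $D_1,D_2$ can be handled by multiplier calculus. Acting by an element of $U(\g)$ on a bi-$K$-invariant kernel and averaging over $K$ produces $G$-invariant differential operators, and these correspond to polynomial multipliers. So the main work is the case $D_1=D_2=1$, together with a uniform version for polynomial-weighted symbols. Where this reduction is delicate (non-invariant $D_i$), I would instead differentiate $\varphi_{-\lambda}$ under the integral and use Lemma \ref{lem-spherical-functions}(ii). That gains a factor $\langle\lambda\rangle^{d_1+d_2}$, which is effectively a symbol of order $r+d_1+d_2$.

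\textbf{Dyadic decomposition.} Split $m=\sum_{j\ge0} m_j$ with $m_0$ compactly supported and $m_j$ supported in $\{2^{j-1}\le|\lambda|\le 2^{j+1}\}$. This gives $W$-invariant pieces satisfying $|\partial^\alpha m_j|\lesssim 2^{j(r-|\alpha|)}$, and each $k_j=\HS^{-1}m_j$ lies in $\HCschwartz(\GmodmodK)$.

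\textbf{Small $|z|$.} For $|z|\le 1$, the bound $(*_r)$ should follow by the method of Lemma \ref{lem-estimate-m-j} run in reverse. From the inversion formula \eqref{eqn-spherical-transform-inversion} and $|\cfunc(\lambda)|^{-2}\lesssim\langle\lambda\rangle^{n-a}$, the trivial estimate is $|k_j(z)|\lesssim 2^{j(r+n)}\varphi_0(z)$. Integration by parts in $\lambda$ gains $|z|^{-1}2^{-j}$ per step; to see this, write $\varphi_{-\lambda}$ via its Harish-Chandra expansion, or more cleanly pass to $\ab$ with the Abel transform. Summing over $2^j\lessgtr|z|^{-1}$ as in the proof of Theorem \ref{thm-Hormander-implies-HC} then gives $|z|^{-n-r-N}$. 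Smoothness away from $e$ follows from the same uniform convergence applied to derivatives.

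\textbf{Large $|z|$ (the main obstacle).} For $|z|\ge1$, I need decay of the form $\varphi_0(z)|z|^{-N}$, and bounds on $\partial_\lambda$ of $\varphi_\lambda$ alone lose the exponential factor. Here I would use Corollary \ref{cor-abel-support-invariance}. Fix $z$, choose $\Omega_z$ vanishing on $|H|<|z|/2$, and write
\[
k(z)=\A^{-1}(\Omega_z\cdot\A k)(z),
\]
where $\A k=\F^{-1}\mathcal{I}m$ is a Euclidean kernel of a symbol on $\ab$. By Theorem \ref{thm-kernel-estimates-Euclidean} applied on $\ab$ (in dimension $a$), $\Omega_z\A k$ is Schwartz with seminorms bounded by $|z|^{-N}$ for every $N$.

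It then remains to invert the Abel transform with $\varphi_0$-weighted control. Concretely, $\A^{-1}$ of a Schwartz function $g$ should satisfy
\[
|\A^{-1}g(z)|\lesssim\varphi_0(z)\,(1+|z|)^{c}\sum\|g\|_{\text{seminorms}}
\]
by continuity of $\A^{-1}:\schwartz(\ab)^W\to\HCschwartz(\GmodmodK)$. The catch is that this needs uniformity in $z$, since the cutoff $\Omega_z$ moves with $z$. I expect this to be the hardest step. I would try to realise the continuity by handling $\HS^{-1}$ on the Fourier side, using the estimates of Lemma \ref{lem-spherical-functions}(iii) and $|z|^{|\alpha|}$ growth, which only costs polynomial factors absorbed by the arbitrary $N$. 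A dyadic cutoff in $|H|$ would keep the derivative bounds of $\Omega_z$ uniform, of size $|z|^{-|\beta|}$.

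Finally, combining the small-$|z|$ and large-$|z|$ regimes, with $D_1,D_2$ folded into the order as above, yields $(*_r)$.
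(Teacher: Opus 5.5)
There is a genuine gap, and it lies in the regime you treat as routine. For $|z|\le 1$ your argument rests on the assertion that integration by parts in $\lambda$ gains $(2^j|z|)^{-1}$ per step, i.e.\ $|k_j(z)|\lesssim_M \varphi_0(z)\,2^{j(n+r)}(2^j|z|)^{-M}$. This per-piece bound is the real content of the theorem, and neither mechanism you suggest produces it.

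On the spherical side there is no general identity playing the role of $e^{ix\cdot\xi}=|x|^{-2}(-\Delta_\xi)e^{ix\cdot\xi}$ for $\varphi_{-\lambda}(z)$. So Lemma~\ref{lem-estimate-m-j} cannot simply be ``run in reverse'': you would need a $\lambda$-side operator acting on $\varphi_{-\lambda}(z)$ as multiplication by a function of $|z|$. Moreover, Lemma~\ref{lem-spherical-functions}(iii) goes the wrong way, since each $\lambda$-derivative of $\varphi_\lambda$ \emph{costs} a factor $|z|$. And \eqref{eqn-c-function-estimate} gives no derivative bounds on $|\cfunc(\lambda)|^{-2}$.

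The Harish-Chandra expansion converges only on the open chamber, with constants that blow up at the walls and at $H=0$. It therefore yields nothing uniform on $\{|z|\le 1\}$. Passing to $\ab$ only gives Euclidean decay of $\ell_j=\A k_j$ away from $0$ (Lemma~\ref{lem-l-j-bound}); converting that into a pointwise bound on $k_j(z)$ is exactly the missing step.

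Your large-$|z|$ device, applied to the whole kernel, does not close the gap either. A single fixed-order bound such as $|\A^{-1}g(z)|\lesssim\varphi_0(z)\sum_{|\alpha|\le n+1}\|\partial^\alpha g\|_{L^1(\ab)}$, with $g=\Omega_z\A k$, yields only $|z|^{-n-r-1}$ near $e$ rather than $|z|^{-n-r}$.

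The paper supplies the missing step by using the support-preserving cutoff for \emph{every} $H_0$, small or large, on each dyadic piece separately. It writes $k_j(e^{H_0})=k_j^{H_0}(e^{H_0})$ with $\ell_j^{H_0}=\Omega^{H_0}\ell_j$. Hence $|k_j(e^{H_0})|\le\varphi_0(e^{H_0})\int|m_j^{H_0}|\,|\cfunc|^{-2}\,d\lambda$, and this integral is split at $|\lambda|=2^{j+2}$.
\begin{itemize}
\item The near part is controlled by Cauchy--Schwarz and the Euclidean Plancherel theorem on $\ab$. This uses $\int_{|\lambda|\le 2^{j+2}}|\cfunc|^{-4}\,d\lambda\lesssim 2^{j(2n-a)}$ and the smallness of $\|\Omega^{H_0}\ell_j\|_{L^2}$.
\item The far part is controlled by $m_j^{H_0}=-m_j*\check{\omega}^{H_0}$ on $\{|\lambda|>2^{j+2}\}$ together with Lemma~\ref{lemma-bound-fourier-transform-cutoff-H-epsilon}.
\end{itemize}
Together these give exactly your per-piece gain with sharp exponents. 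After that, your summation over $2^j\lessgtr|z|^{-1}$, with the trivial bound $2^{j(n+r)}\varphi_0(z)$ on the low-frequency side, goes through.

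Conversely, the step you flag as the main obstacle is harmless. The bound $|\HS^{-1}\psi(z)|\le\varphi_0(z)\int|\psi|\,|\cfunc|^{-2}\,d\lambda$ is already uniform in $z$. So the only $z$-dependence comes through the Schwartz seminorms of $\Omega_z\A k$, which the Euclidean kernel estimates on $\ab$ bound by $|z|^{-N}$ for $|z|\ge 1$. To make this rigorous, use the distributional form of the support theorem, or apply Corollary~\ref{cor-abel-support-invariance} to each $k_j$ and sum. That part of your plan is correct and simpler than the paper's.

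Finally, the $K$-averaging reduction does not control the pointwise values $k(D_1;z;D_2)$. Only the Lemma~\ref{lem-spherical-functions}(ii) route works. Since that lemma gives merely a size bound on $\varphi_{-\lambda}(D_1;z;D_2)$, it fits an argument that takes absolute values after the cutoff, not one based on integration by parts in $\lambda$.
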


We will give a proof of this theorem in Section \ref{sec-proof-kernel-estimates-G/K}. For the remainder of this section, we will look at its implications. To get things started, Theorem \ref{thm-kernel-estimates-G/K} implies the converse to Theorem \ref{thm-Hormander-implies-HC-upgrade}, which is the following theorem.

\begin{theorem}\label{thm-HC-implies-hormander}
    Every operator $T\in \Psi^{r,G}_{\operatorname{HC}}(G/K)$ is a $G$-equivariant Hörmander pseudodifferential operator on $G/K$, as per the definition in Section \ref{subsec-Hormander-PSDO}, with its equivariant kernel $k$ belonging to $\HCschwartz(\GmodmodK)$ away from the diagonal.
\end{theorem}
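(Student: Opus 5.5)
We take Theorem \ref{thm-kernel-estimates-G/K} as given and derive both conclusions from it: the Schwartz-away-from-the-diagonal property, and the local structure of a Hörmander PSDO.

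\textbf{Off-diagonal part.} We choose a bi-$K$-invariant cutoff $\chi\in\csmooth(\GmodmodK)$ with $\chi\equiv 1$ on $\{|z|\le 1\}$ and $\supp\chi\subset\{|z|\le 2\}$. It can be built as a smooth function of $|z|$, which is smooth away from the origin, so its derivatives are uniformly bounded. Then $(1-\chi)k$ is smooth by Theorem \ref{thm-kernel-estimates-G/K}. On $\{|z|\ge 1\}$, the estimate $(*_r)$ with $N$ arbitrarily large gives $|k(D_1;z;D_2)|\lesssim \varphi_0(z)(1+|z|)^{-q}$ for every $q$. Derivatives landing on $1-\chi$ are supported in the compact annulus $1\le |z|\le 2$ and are bounded there. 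By the Leibniz rule, every seminorm \eqref{eqn-HC-schwartz-seminorms} of $(1-\chi)k$ is therefore finite, so $(1-\chi)k\in\HCschwartz(\GmodmodK)$.

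\textbf{Local part.} Write $T=T_\chi+T_\infty$, where $T_\chi$ has kernel $\chi k$ and $T_\infty$ has kernel $(1-\chi)k$. The operator $T_\infty$ is convolution with a smooth kernel, so its Schwartz kernel $(x,y)\mapsto ((1-\chi)k)(y^{-1}x)$ is smooth on $G/K\times G/K$ and $T_\infty$ is smoothing; in particular it is a Hörmander PSDO of order $-\infty$. It remains to show that $T_\chi$, which is properly supported, is a Hörmander PSDO of order $r$.

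For this we work in a chart. By equivariance it is enough to consider charts near a point, and we can use normal coordinates $\exp:\p\to G/K$ pulled back by translations. The Schwartz kernel of $T_\chi$ is $\mathcal{K}(x,y)=(\chi k)(y^{-1}x)$. In local coordinates $(x,z)$, with $z$ the coordinates of $y^{-1}x$ near $eK$, the kernel $\mathcal{K}$ is smooth off the diagonal. Near the diagonal it satisfies $|\partial_x^\beta\partial_z^\alpha \tilde k(x,z)|\lesssim |z|^{-n-r-|\alpha|-N}$, because left- and right-invariant derivatives span the coordinate derivatives with smooth coefficients. This is exactly \eqref{eqn-kernel-estimate-Euclidean}.

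For $r<0$, Theorem \ref{thm-kernel-PSDO-equivalence-Euclidean} then says that the localized operator $\chi_1 T\chi_1$ has a symbol of order $r$, so $T_\chi$ is a Hörmander PSDO. Two routine points need checking here: the change of variables $(x,y)\mapsto(x,x-y)$ versus $(x,y^{-1}x)$ preserves the kernel class, and $\varphi_0\le 1$ may simply be dropped.

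\textbf{Case $r\ge 0$ (main obstacle).} Here Theorem \ref{thm-kernel-PSDO-equivalence-Euclidean} fails, since cancellation conditions are needed. We plan to use the Laplacian trick in reverse. Choose $N$ with $r-2N<0$ and write
\[
m(\lambda)=p_\Delta(\lambda)^N\,m'(\lambda),\qquad m'(\lambda)\coloneqq m(\lambda)\,p_\Delta(\lambda)^{-N}.
\]
Since $p_\Delta^{-N}=(-1)^N(|\lambda|^2+|\rho|^2)^{-N}$ is a $W$-invariant Harish-Chandra symbol of order $-2N$ and has no zeros (because $|\rho|>0$), $m'\in S^{r-2N,G}_{\operatorname{HC}}(G/K)$. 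Hence $T_m=\Delta^N\circ T_{m'}$. By the negative-order case, $T_{m'}$ is a Hörmander PSDO of order $r-2N$ whose kernel is Harish-Chandra Schwartz away from the diagonal. Composing with the differential operator $\Delta^N$ of order $2N$ gives a Hörmander PSDO of order $r$.

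The step that needs care is that $T_{m'}$ is not properly supported, so composition must be justified. Split $T_{m'}$ into a properly supported piece and a smoothing piece; $\Delta^N$ is local, so it maps the smoothing piece to a smoothing operator and the properly supported piece to a PSDO of order $r$. The off-diagonal Schwartz property of $k$ was already established directly from $(*_r)$ in the first step, so it requires no further argument here.
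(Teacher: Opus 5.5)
Your proposal is correct and follows essentially the same route as the paper. You read off the off-diagonal $\HCschwartz(\GmodmodK)$ property and the local Euclidean kernel estimates from Theorem \ref{thm-kernel-estimates-G/K}, invoke Theorem \ref{thm-kernel-PSDO-equivalence-Euclidean} for $r<0$, and reduce $r\ge 0$ to negative order by factoring out $\Delta^N$. The only difference is cosmetic: you write $T_m=\Delta^N\circ T_{m'}$ and justify the composition by splitting off a smoothing part, whereas the paper writes $T_m=(T_m\circ(\Delta^{-1})^N)\circ\Delta^N$ and uses that $\Delta^N$ is properly supported; since the multiplier operators commute, these are the same argument.
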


\begin{proof}
    Firstly, Theorem \ref{thm-kernel-estimates-G/K} implies that the equivariant kernel $k$ of $T$ is an element of $\HCschwartz(\GmodmodK)$ away from the diagonal.
    
    To prove the rest of the theorem, let us first assume that $m\in S^{r,G}_{\operatorname{HC}}(G/K)$ for some $r<0$. From the definition of a Hörmander PSDO using local charts, and Theorem \ref{thm-kernel-PSDO-equivalence-Euclidean}, an operator $(T_m f)(x)=\int_{G/K}\mathcal{K}(x,y)\,f(y)\,dy$ is a Hörmander PSDO on $G/K$ if the Schwartz kernel $\mathcal{K}(x,y)$ satisfies the decay estimates of Theorem \ref{thm-kernel-estimates-Euclidean} in a coordinate patch near the diagonal. These estimates follow readily from Theorem \ref{thm-kernel-estimates-G/K}. 

    Suppose now that $m\in S^{r,G}_{\operatorname{HC}}(G/K)$ for some $r\geq 0$. Then, we will again use a trick involving the Laplace-Beltrami operator, similar in spirit to Lemma \ref{lem-Laplacian-trick}. Remember the Harish-Chandra symbol of the Laplace-Beltrami operator $\Delta$ is $-(|\lambda|^2+|\rho|^2)$, which is clearly invertible. So, the operator $\Delta^{-1}$ with Harish-Chandra symbol $-(|\lambda|^2+|\rho|^2)^{-1}$ is an equivariant Harish-Chandra PSDO of order $-2$. Of course, for $N$ sufficiently large, $T_m\circ (\Delta^{-1})^N$ is an equivariant Harish-Chandra PSDO of order $r-2N<0$, and hence, an equivariant Hörmander PSDO on $G/K$. Since $\Delta$ is properly-supported, $$T_m=(T_m\circ (\Delta^{-1})^N)\circ \Delta^N$$ is an equivariant Hörmander pseudodifferential operator on $G/K$ as well.
\end{proof}

It is evident from Theorem \ref{thm-HC-implies-hormander} that the rapid off-diagonal decay property of the equivariant kernel is actually a necessary condition for a $G$-equivariant operator to have a Harish-Chandra symbol function. In particular, if \begin{equation}
    \Psi^{-\infty,G}_{\operatorname{HC}}(G/K) \coloneqq  \bigcap_{r} \,\Psi^{r,G}_{\operatorname{HC}}(G/K)
\end{equation} is the set of equivariant Harish-Chandra pseudodifferential operators of order $-\infty$, then we have the following theorem.

\begin{theorem}\label{thm-smoothing=schwartz}
    The inclusion $\HCschwartz(\GmodmodK)\hookrightarrow \Psi^{-\infty,G}_{\operatorname{HC}}(G/K)$ is an isomorphism of Fréchet algebras. Furthermore, every operator $T\in \Psi^{r,G}_{\operatorname{HC}}(G/K)$ extends to a continuous linear operator $T:\HCschwartz(G/K)\rightarrow \HCschwartz(G/K)$.
\end{theorem}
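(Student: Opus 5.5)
The proof has two parts: identify $\Psi^{-\infty,G}_{\operatorname{HC}}(G/K)$ with $\HCschwartz(\GmodmodK)$, then extend operators of finite order to $\HCschwartz(G/K)$.

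\textbf{Order $-\infty$ operators.} For the inclusion, if $k\in\HCschwartz(\GmodmodK)$ then Example \ref{example-convolution-schwartz} shows that convolution by $k$ is a Helgason--Fourier multiplier with symbol $\HS k$. By Theorem \ref{thm-Schwartz-algebra-characterization}, $\HS k\in\schwartz(\ab^*)^W$. A $W$-invariant Schwartz function satisfies $|\partial^\alpha_\lambda m|\lesssim\langle\lambda\rangle^{r-|\alpha|}$ for every $r$, so it lies in $\bigcap_r S^{r,G}_{\operatorname{HC}}(G/K)$.

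Conversely, take $m\in\bigcap_r S^{r,G}_{\operatorname{HC}}(G/K)$. Then every derivative of $m$ decays faster than any power of $\langle\lambda\rangle$, so $m\in\schwartz(\ab^*)^W$. Theorem \ref{thm-Schwartz-algebra-characterization} then gives $k=\HS^{-1}m\in\HCschwartz(\GmodmodK)$.

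Both descriptions use the same map $\HS$, and the Fréchet topologies agree: the symbol seminorms over all $r$ coincide with the Schwartz seminorms on $\schwartz(\ab^*)^W$, and $\HS$ is a topological isomorphism onto $\schwartz(\ab^*)^W$. Composition of multiplier operators is multiplication of symbols. Convolution in $\HCschwartz(\GmodmodK)$ corresponds to pointwise product under $\HS$ by Theorem \ref{thm-Schwartz-algebra-characterization}. Hence the inclusion is an isomorphism of Fréchet algebras.

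\textbf{Extension to $\HCschwartz(G/K)$.} I split the kernel of $T_m$ near and away from the origin. Choose a bi-$K$-invariant cutoff $\chi\in\csmooth(\GmodmodK)$ with $\chi=1$ on $\{|z|\le1\}$, and write $k=\chi k+(1-\chi)k$.

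For the far part, Theorem \ref{thm-kernel-estimates-G/K} gives, for $|z|\ge1$, the bound $|k(D_1;z;D_2)|\lesssim\varphi_0(z)|z|^{-N}$ for every $N$. Derivatives falling on $1-\chi$ are bounded and compactly supported. Hence $(1-\chi)k\in\HCschwartz(\GmodmodK)$. Convolution by a function in $\HCschwartz(G)$ maps $\HCschwartz(G/K)$ continuously into itself, since $\HCschwartz(G)$ is a Fréchet algebra under convolution and right $K$-invariance is preserved.

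For the near part, $\chi k$ is a compactly supported bi-$K$-invariant distribution. Convolution $f\mapsto f*(\chi k)$ is then a properly supported equivariant operator with compact support radius $R$. Left-invariant derivatives $D_2$ pass onto $f$, because right convolution by $\chi k$ commutes with the left regular action. So it suffices to show two things: $f\mapsto f*(\chi k)$ is continuous in the seminorm $\sup|g(D;\cdot)|$ weighted by $(1+|\cdot|)^q\varphi_0^{-1}$, and right-invariant derivatives are handled too. For the latter, write $\chi k=\Delta^N u$ with $u$ a locally integrable compactly supported kernel, using Theorem \ref{thm-HC-implies-hormander} with $r-2N<-n$. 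Then $f*(\chi k)=(\Delta^Nf)*u$ up to a smoothing convolution by a $\csmooth$ kernel. Convolution with a compactly supported $L^1$ kernel $u$ is controlled using $\varphi_0(xy)\asymp\varphi_0(x)$ and $1+|xy|\asymp 1+|x|$ for $|y|\le R$, which follow from Harish-Chandra's estimates on $\varphi_0$.

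\textbf{Main obstacle.} The hard step is this last local estimate: justifying that the compactly supported singular part preserves the Harish-Chandra seminorms, including derivatives on both sides. I would reduce it to $L^1$ kernels via the Laplacian factorization, and use the uniform comparability of $\varphi_0$ under bounded translations. Continuity in $m$, if needed, follows from the seminorm dependence in Theorem \ref{thm-kernel-estimates-G/K}.
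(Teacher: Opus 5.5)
Your proposal is correct in substance, but it takes a different route from the paper in both halves.

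\textbf{Order $-\infty$.} The paper works on the kernel side. It splits $k=k_0+k_\infty$ and uses Theorem \ref{thm-kernel-estimates-G/K} to put $k_\infty$ in $\HCschwartz(\GmodmodK)$. It then uses Theorem \ref{thm-HC-implies-hormander} together with the Hörmander calculus to see that $k_0\in\csmooth(\GmodmodK)$, and only then invokes Theorem \ref{thm-Schwartz-algebra-characterization}. Your observation that $\bigcap_r S^{r,G}_{\operatorname{HC}}(G/K)=\schwartz(\ab^*)^W$, with equivalent seminorms, makes that detour unnecessary. Theorem \ref{thm-Schwartz-algebra-characterization} alone then gives the algebraic and topological isomorphism, without using the kernel estimates at all.

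\textbf{Extension to $\HCschwartz(G/K)$.} The paper only treats order $-\infty$, via Example \ref{example-convolution-schwartz}, and then appeals to approximating $m$ by Schwartz symbols. That sketch needs a bound on the approximating operators in the $\HCschwartz(G/K)$ seminorms, uniform along the sequence, which it does not spell out. Your near/far splitting supplies exactly such a bound. It uses Theorem \ref{thm-kernel-estimates-G/K} for the far part, and the comparability of $\varphi_0(xy)$ with $\varphi_0(x)$, and of $1+|xy|$ with $1+|x|$, for $|y|\le R$ in the near part. So your route costs more work but gives explicit continuity estimates.

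Two points in the near-field step need fixing.

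\emph{The derivatives are backwards.} Since $Tf=f*k$ commutes with left translations, it is the right-invariant derivatives that pass through: $(f*k)(D_1;x)=(f(D_1;\cdot)*k)(x)$. The left-invariant ones land on the kernel: $(f*k)(x;D_2)=(f*k(\cdot;D_2))(x)$. So your factorization must absorb $D_2$, and the condition $r-2N<-n$ is not enough. Choose $N$ depending on $d_2$ with $r-2N+d_2<0$. Then the $D_2$-derivative of the cut-off kernel is integrable, by $(*_{r-2N})$ and the bound $\delta(H)\lesssim|H|^{n-a}$.

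\emph{Equivariance of the factorization.} To keep it equivariant without constructing an equivariant parametrix, factor on the symbol side. Set $v=\HS^{-1}\big((-1)^N(|\lambda|^2+|\rho|^2)^{-N}m\big)$. This is the kernel of an order $r-2N$ Harish-Chandra PSDO with $\Delta^N v=k$, so $f*k=(\Delta^N f)*v$. Now split $v$ rather than $k$:
\begin{itemize}
\item $(1-\chi)v\in\HCschwartz(\GmodmodK)$, by Theorem \ref{thm-kernel-estimates-G/K};
\item $\chi v$ satisfies $(\chi v)(\cdot;D_2)\in L^1_c$, which is handled by your compactly supported $L^1$ convolution estimate.
\end{itemize}
With these adjustments your argument goes through.
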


\begin{proof}
    Let $T\in \Psi^{-\infty,G}_{\operatorname{HC}}(G/K)$ and let $k$ be its equivariant kernel. Let $k=k_0+k_\infty$ as in Theorem \ref{thm-Hormander-implies-HC-upgrade}. It follows from Theorem \ref{thm-kernel-estimates-G/K} that $k_\infty$ is an element of $\HCschwartz(\GmodmodK)$. On the other hand, $k_0$ is the equivariant kernel of a properly-supported, equivariant Hörmander PSDO of order $-\infty$, and hence, is an element of $\csmooth(\GmodmodK)$. So, $k=k_0+k_\infty$ is an element of $\HCschwartz(\GmodmodK)$. The Fréchet algebra isomorphism then follows from Theorem \ref{thm-Schwartz-algebra-characterization}.

    It is clear from Example \ref{example-convolution-schwartz} that every operator $T\in \Psi^{-\infty,G}_{\operatorname{HC}}(G/K)$ extends to a continuous linear operator on $\HCschwartz(G/K)$. The result for operators of order $r\neq -\infty$ follows from a standard argument by approximating an order $r$ Harish-Chandra symbol function $m(\lambda)$ by a sequence of Schwartz functions on $\ab^*$.
\end{proof}

\begin{remark}
    Theorem \ref{thm-smoothing=schwartz} implies that the correct notion of an equivariant ``smoothing" operator on $G/K$ is an integral operator with its equivariant kernel in $\HCschwartz(\GmodmodK)$. Moreover, this class of equivariant smoothing operators forms a closed two-sided ideal in the algebra of equivariant Harish-Chandra pseudodifferential operators on $G/K$.
\end{remark}

\subsection{Proof of Theorem \ref{thm-kernel-estimates-G/K}}\label{sec-proof-kernel-estimates-G/K} The proof will be divided into 10 steps. Let us recall that we need to estimate the quantity \begin{equation}
    |k(D_1;z;D_2)|\quad \text{ for }z\neq e \text{ and }D_1,D_2\in U(\g)\,.
\end{equation}

\textbf{Step 1: (Reductions)} For $D_1,D_2\in U(\g)$, one can write, by differentiating under the integral sign, that \begin{equation}\label{eqn-kernel-expression-D1-D2}
    k(D_1;z;D_2)=\frac{1}{|W|}\int_{\ab^*} m(\lambda)\,\varphi_{-\lambda}(D_1;z;D_2)\,|\cfunc(\lambda)|^{-2}\,d\lambda\,.
\end{equation} But, from Lemma \ref{lem-spherical-functions}.$(\emph{ii})$, we have the estimate $|\varphi_{-\lambda}(D_1;z;D_2)|\lesssim \langle \lambda\rangle^{d_1+d_2}\cdot\varphi_0(z)$, which says that for estimating \eqref{eqn-kernel-expression-D1-D2}, it is enough to find the estimates for the equivariant kernel (without any differential operators) corresponding to the Harish-Chandra symbol function $\langle \lambda\rangle^{d_1+d_2}\cdot m(\lambda)$ of order $r+d_1+d_2$. Hence, it is enough to prove the estimates for $k(z)$ without any differential operators.

Furthermore, since the kernel $k$ is bi-$K$-invariant, it is enough to prove the estimates for $|k(e^{H_0})|$ for $H_0\in \overline{\ab^+}\setminus\{0\}$. In conclusion, we only need to estimate the absolute value of \begin{equation}\label{eqn-kernel-expression-h0}
    k(e^{H_0})=\frac{1}{|W|}\int_{\ab^*} m(\lambda)\,\varphi_{-\lambda}(e^{H_0})\,|\cfunc(\lambda)|^{-2}\,d\lambda
\end{equation} for all $H_0\in \overline{\ab^+}\setminus\{0\}$. 

Finally, for Steps 2 through 9, we will \emph{assume} that the order $r>-a$; the remaining case, $r\leq -a$ is handled separately in Step 10 using Lemma \ref{lem-Laplacian-trick-upgraded}. So, until Step 9, it suffices to prove the estimates \begin{equation}
\quad\quad\quad|k(e^{H_0})|\lesssim_{N}\varphi_0(H_0)\cdot|H_0|^{-n-r-N}\quad\text{ for all }N\geq 0,\; H_0\in \overline{\ab^+}\setminus\{0\},
\end{equation} for all equivariant Harish-Chandra PSDOs of order $r> -a$. \\

\textbf{Step 2: (Chopping on $\ab^*$)} To start off, using the commutative diagram in \eqref{diagram-Abel-triangle}, define \begin{equation}
\ell\coloneq \mathcal{F}^{-1}(\mathcal{I}m)=\A(k)\in \E'(\ab)^W\,.
\end{equation} We will chop up the symbol function $m(\lambda)$ into pieces supported in dyadic annuli. This resembles the construction appearing in Stein's proof of Theorem \ref{thm-kernel-estimates-Euclidean}.

\begin{construction}\label{Chopping - 1}
Let us fix a bump function $\zeta_0\in \csmooth(\ab^*)^W$ such that $|(\partial^\alpha\zeta_0)(\lambda)|\leq 1$ for all multi-indices $\alpha$, and $$\zeta_0(\lambda)=\begin{cases}
    1, \quad |\lambda|\leq 1,\text{ and }\\ 
    0, \quad |\lambda|\geq 2\,.
    \end{cases}$$ We then define $\zeta_j(\lambda)\coloneq\zeta_0(2^{-j}\lambda)-\zeta_0(2^{-j+1}\lambda)$ for all $j\in \N$. Then $\sum_{j=0}^\infty\zeta_j(\lambda)=1$ for all $\lambda\in \ab^*$, and $\operatorname{supp}(\zeta_j)\subset \{\lambda\in \ab^*\,:\,2^{j-1}\leq |\lambda|\leq 2^{j+1}\}$ for all $j\in \N$. $\{\zeta_j\}$ is an example of a \emph{dyadic partition of unity} in $\ab^*$. See \cite[Chapter VI, Section 4.1]{Stein93-Real-Variable-Methods} for more details. Now, define \begin{equation}\label{eqn-definition-j}
    m_j(\lambda)\coloneq m(\lambda)\cdot \zeta_j(\lambda)\quad\text{ and }\quad k_j\coloneq \HS^{-1}(m_j),\quad\ell_j\coloneq \F^{-1}(\mathcal{I}m_j)\,.
    \end{equation} The following diagram summarizes the process, and the starting point is $m_j$. \begin{equation}\label{eqn-diagram-j}
        \begin{tikzcd}
    m_j \arrow[d,mapsto,"\HS^{-1}"']\arrow[r, mapsto, "\mathcal{I}^{-1}"] & \F (\ell_j)\arrow[d,mapsto, "\F^{-1}"] \\
    k_j   \arrow[r,mapsto, "\A"]&  \ell_j
    \end{tikzcd}
    \end{equation} 
     \begin{remark}
    The chopping in Construction \ref{Chopping - 1} is taking place over $\ab^*$ (in principle, the spherical tempered dual). Contrast it with Construction \ref{chopping-0} in the proof of Theorem \ref{thm-Hormander-implies-HC}, which involved chopping over the punctured disk in $G/K$.
\end{remark}

Since $m_j$'s are compactly supported, $k_j$'s and $\ell_j$'s are smooth functions on $G/K$ and $\ab$ respectively. Also, from Euclidean analysis, there are straightforward bounds for the function $\ell_j$.

    \begin{lemma}[{\cite[Lemma, Pg. 244]    {Stein93-Real-Variable-Methods}}]\label{lem-l-j-bound}
        For all $M\in \mathbb{R}_{\geq 0}$, there exists a constant $C_{\alpha,M}>0$ such that $$|\left(\partial^\alpha\ell_j\right)(H)|\leq C_{\alpha,M}\cdot |H|^{-M}\cdot 2^{j(a+r-M+|\alpha|)}$$ for all $H\neq 0$ and $j\in \N$.
    \end{lemma}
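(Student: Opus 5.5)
The plan is the standard Euclidean integration-by-parts estimate on the dyadic pieces. By construction, $\F(\ell_j)=\mathcal{I}m_j$, so
\begin{equation*}
\ell_j(H)=c\int_{\ab^*} m_j(-\lambda)\,e^{i\lambda(H)}\,d\lambda ,
\end{equation*}
and this integral converges absolutely because $m_j$ is smooth and supported in $\{2^{j-1}\leq|\lambda|\leq 2^{j+1}\}$ (or in $\{|\lambda|\leq 2\}$ when $j=0$). Differentiating under the integral sign gives
\begin{equation*}
\partial^\alpha\ell_j(H)=c\int_{\ab^*}(i\lambda)^\alpha\, m_j(-\lambda)\,e^{i\lambda(H)}\,d\lambda .
\end{equation*}

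\textbf{Symbol bounds on $m_j$.} The first thing to record is a Leibniz-rule estimate for $m_j=m\cdot\zeta_j$. The symbol estimates of Definition \ref{defn-harish-chandra-symbol} give $|\partial^\beta m(\lambda)|\lesssim\langle\lambda\rangle^{r-|\beta|}$. Since $\zeta_j(\lambda)=\zeta_0(2^{-j}\lambda)-\zeta_0(2^{-j+1}\lambda)$, we have $|\partial^\gamma\zeta_j|\lesssim 2^{-j|\gamma|}$. On the support of $m_j$ we also have $\langle\lambda\rangle\sim 2^j$. Combining these yields
\begin{equation*}
|\partial^\beta_\lambda\big(\lambda^\alpha m_j(-\lambda)\big)|\lesssim_{\alpha,\beta}2^{j(r+|\alpha|-|\beta|)} ,
\end{equation*}
uniformly in $j$, with the derivative supported in a set of measure $\lesssim 2^{ja}$.

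\textbf{Integer $M$.} For integer $M=k\ge 0$, write $|H|^{2k}e^{i\lambda(H)}=(-\Delta_\lambda)^k e^{i\lambda(H)}$. More simply, for each $|\beta|=k$ we have $H^\beta e^{i\lambda(H)}=(-i\partial_\lambda)^\beta e^{i\lambda(H)}$. Integrate by parts $k$ times; there are no boundary terms because of the compact support. This gives
\begin{equation*}
|H^\beta\,\partial^\alpha\ell_j(H)|\lesssim 2^{j(r+|\alpha|-k)}\cdot 2^{ja}.
\end{equation*}
Summing over $|\beta|=k$ and using $|H|^k\lesssim\sum_{|\beta|=k}|H^\beta|$ gives the claim for integer $M$. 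The constant depends only on $\alpha$, $k$ and finitely many symbol seminorms of $m$, and is uniform in $j$.

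\textbf{Non-integer $M$.} For $M=k+\theta$ with $\theta\in(0,1)$, use the interpolation
\begin{equation*}
\min(A_k,A_{k+1})\le A_k^{1-\theta}A_{k+1}^{\theta},
\end{equation*}
where $A_k=|H|^{-k}2^{j(a+r-k+|\alpha|)}$. Both the $k$ and the $k+1$ bounds hold simultaneously, so
\begin{equation*}
|\partial^\alpha\ell_j(H)|\le\min(A_k,A_{k+1})\le |H|^{-M}2^{j(a+r-M+|\alpha|)}.
\end{equation*}

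\textbf{Main obstacle.} There is no real obstacle; the only points needing care are bookkeeping. The first is the uniformity of the $\zeta_j$ derivative bounds, which is where the hypothesis $|\partial^\alpha\zeta_0|\le1$ and the dilation structure are used. The second is the $j=0$ term, where $\langle\lambda\rangle\sim1$ and all the bounds are trivially of the stated form.
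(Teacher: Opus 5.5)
Your proposal is correct and follows the same route as the paper. The paper proves integer $M$ by citing Stein's integration-by-parts argument, which you spell out using the Leibniz bounds $|\partial^\beta_\lambda(\lambda^\alpha m_j(-\lambda))|\lesssim 2^{j(r+|\alpha|-|\beta|)}$ on a support of measure $\lesssim 2^{ja}$, and then interpolates between consecutive integers to cover all real $M\geq 0$, exactly as you do. The only cosmetic point is that the constants $C_{\alpha,k}$ and $C_{\alpha,k+1}$ should be carried through your step $\min(A_k,A_{k+1})\le A_k^{1-\theta}A_{k+1}^{\theta}$, which is harmless.
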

    \begin{proof}
        The proof in \cite[Lemma, Pg. 244]{Stein93-Real-Variable-Methods} using an integration by parts argument is given for only $M\in \N\cup\{0\}$. The result for all $M\in \mathbb{R}_{\geq 0}$ follows from interpolation.
    \end{proof}
   
Moreover, since $\{\zeta_j\}$ in Construction \ref{Chopping - 1} is a partition of unity, it follows that (as a locally finite sum) \begin{equation}\label{eqn-mj-decomposition}
    m=\sum_{j=0}^\infty m_j\,,
    \end{equation} and each of the $m_j$'s satisfies the symbol estimates in Definition \ref{defn-harish-chandra-symbol} with the constants independent of $j$. So, to estimate $|k(e^{H_0})|$, it is enough to estimate $\sum_{j=0}^\infty|k_j(e^{H_0})|$.
\end{construction}

\textbf{Step 3: (Chopping on $\ab$)} Construction \ref{Chopping - 2}, stated below, is a minor variation of Anker's method in \cite{Anker92_Sharp_Estimates_Laplacian}.

\begin{construction}\label{Chopping - 2}
    Let us \emph{fix} $H_0\in \overline{\ab^+}$ non-zero. We will need a function $\omega^{H_0}\in \csmooth(\ab)^W$ (see Figure \ref{fig-cutoff-H-epsilon}) such that \begin{align*}
            \omega^{H_0}(H)=\begin{cases}
                1 & \text{ if }|H|\leq \frac{|H_0|}{2}\,,\\
                0 & \text{ if }|H|\geq \frac{3|H_0|}{4}\,, 
            \end{cases}
    \end{align*} and for any multi-index $\alpha\neq 0$, the functions $\partial^\alpha \omega^{H_0}(H)$ are supported in the annulus $\{\frac{|H_0|}{2} < |H| < \frac{3|H_0|}{4}\}$ and satisfy the bounds \begin{equation}\label{eqn-derivative-pointwise-bounds-cutoff-H-epsilon}
    |\partial^\alpha \omega^{H_0}(H)|\leq C_\alpha \cdot |H_0|^{-|\alpha|}
    \end{equation} for all $H\in \ab$ and for some constant $C_\alpha>0$. The function $\omega^{H_0}$ can be constructed by choosing a smooth function $\omega\in \smooth(\R)$ such that $\omega(t)=1$ for $t\leq \frac{1}{2}$ and $\omega(t)=0$ for $t\geq \frac{3}{4}$, such that $\left|\frac{d^i\omega}{dt^i}(t)\right|\leq 1$ for all $i\geq 0$, and defining \begin{equation}\label{eqn-cutoff-H-0-omega}
        \omega^{H_0}(H)\coloneq \omega\left(|H|/|H_0|\right)\,.
    \end{equation} Similarly, define $\Omega^{H_0}\in \smooth(\ab)^W$ such that $\Omega^{H_0}\coloneqq 1-\omega^{H_0}$. Clearly, the derivatives $\partial^\alpha\Omega^{H_0}$ also satisfy \eqref{eqn-derivative-pointwise-bounds-cutoff-H-epsilon} for all multi-indices $\alpha\neq 0$. 

    \begin{figure}
        \centering
        \caption{The graph of $\omega(t)$ and the support of $\omega^{H_0}(H)$ in $\ab$ as defined in \eqref{eqn-cutoff-H-0-omega}.}
        \label{fig-cutoff-H-epsilon}
        \begin{tikzpicture}[scale=.9]

  \begin{scope}[xshift=-10.5cm]
    \draw[-latex] (-0.5,0) -- (4,0) node[right] {\footnotesize $t$};
    \draw[-latex] (0,-0.5) -- (0,2) node[above] {\footnotesize $\omega(t)$};

    \draw[dashed, gray] (1.6,0) -- (1.6,1.6);
    \draw[dashed, gray] (2.4,0) -- (2.4,1.6);
    \draw[dashed, gray] (3.2,0) -- (3.2,1.6);
    \draw[dashed, gray] (0,1) -- (3.5,1);

    \node[below] at (1.6,0) {\footnotesize $\frac{1}{2}$};
    \node[below] at (2.4,0) {\footnotesize $\frac{3}{4}$};
    \node[below] at (3.2,0) {\footnotesize $1$};
    \node[left] at (0,1) {\footnotesize $1$};

    \draw[very thick]
      (0,1) -- (1.6,1)
      .. controls (1.9,1) and (2.2,0) .. (2.4,0)
      -- (3.5,0);

  \end{scope}

  \draw[-latex, thick] (-6.1,1) -- (-3.5,1) node[midway, above] {\scriptsize Extended radially to $\ab$};

  \def\Rout{3}      
  \def\Rmid{2.25}    
  \def\Rin{1.5}     

  \draw[fill=white] (0,0) circle (\Rout);

  \draw[fill=gray!30] (0,0) circle (\Rmid);

  \draw[fill=gray!70] (0,0) circle (\Rin);

  \draw (0,0) circle (\Rout);
  \draw (0,0) circle (\Rmid);
  \draw (0,0) circle (\Rin);

  \draw[-latex] (0,0) -- (30:\Rin)  node[left] {\footnotesize $\frac{|H_0|}{2}\;\;$};
  \draw[-latex] (0,0) -- (240:\Rmid) node[right]  {\footnotesize $\;\;\;\,\frac{3|H_0|}{4}$};
  \draw[-latex] (0,0) -- (-30:\Rout) node[below right] {\footnotesize $|H_0|$};

  \fill (0,0) circle (2pt);

  \node[anchor=north west] at (\Rout, \Rout+.1) {$\ab$};

\end{tikzpicture}

    \end{figure}

    Using the commutative diagram in \eqref{diagram-Abel-triangle}, these cut-off functions $\omega^{H_0}$ and $\Omega^{H_0}$ will define the following chopped-off versions of the $\ell_j$'s, $k_j$'s and $m_j$'s, say
    \begin{align}\label{eqn-definition-j-h-epsilon}
        \begin{split}
            \ell_j^{H_0}(H)&\coloneq \ell_j(H)\cdot \Omega^{H_0}(H)\quad \quad \text{ and }\quad \quad k_j^{H_0}\coloneq \A^{-1}(\ell_j^{H_0})\, ,\\& m_j^{H_0}\coloneq (\mathcal{I}^{-1}\circ\F)(\ell_j^{H_0})=m_j-m_j*\check{\omega}^{H_0}\, ,
        \end{split}
    \end{align}
    where $\check{\omega}^{H_0}\coloneqq(\mathcal{I}^{-1}\circ \F)(\omega^{H_0})$. The following diagram summarizes the process, and again, the starting point is $\ell_j^{H_0}$. \begin{equation}\label{eqn-diagram-j-h-epsilon}
    \begin{tikzcd}
    m_j^{H_0}& \arrow[l, mapsto, "\mathcal{I}^{-1}"'] \F (\ell_j^{H_0}) \\
    k_j^{H_0} \arrow[u,mapsto,"\HS"]   & \arrow[l,mapsto, "\A^{-1}"] \arrow[u,mapsto, "\F"']\ell_j^{H_0}
    \end{tikzcd}
    \end{equation}
    Lastly, and most importantly, it follows from Corollary \ref{cor-abel-support-invariance} (which uses the support preservation property of the Abel transform i.e., Theorem \ref{thm-abel-support-preserving}) that \begin{equation}\label{eqn-k-j-h-epsilon-enough}
        k_j(e^{H_0})=k_j^{H_0}(e^{H_0}),
    \end{equation} and thus, it is enough to estimate $|k_j^{H_0}(e^{H_0})|$ instead of $|k_j(e^{H_0})|$. 
\end{construction}
\textbf{Step 4: (Setting up the estimates)} It follows from Construction \ref{Chopping - 1} and Construction \ref{Chopping - 2} that to estimate $|k(e^{H_0})|$, it is enough to estimate the quantity \begin{equation}\label{eqn-sum-k-j-h-epsilon}
\sum_{j=0}^\infty |k_j^{H_0}(e^{H_0})|
\end{equation} since $\sum_{j=0}^\infty k_j$ converges to $k$ in the sense of distributions (\cite[Page 243]{Stein93-Real-Variable-Methods}). Using the definition of $k_j^{H_0}$ in the diagram \eqref{eqn-diagram-j-h-epsilon}, along with \eqref{eqn-kernel-expression-h0}, we obtain an estimate for $k_j^{H_0}$, given by  \begin{align}\label{eqn-bound-k-j-H-epsilon}
    \begin{split}
        |k_j^{H_0}(e^{H_0})|&= \frac{1}{|W|}\int_{\ab^*} |m_j^{H_0}(\lambda)|\cdot |\varphi_{-\lambda}(e^{H_0})|\,|\cfunc(\lambda)|^{-2}\,d\lambda\\
    &\leq \frac{\varphi_0(e^{H_0})}{|W|}\int_{\ab^*} |m^{H_0}_j(\lambda)|\cdot |\cfunc(\lambda)|^{-2}\,d\lambda\,.
    \end{split}
\end{align}  
The function $m_j^{H_0}$, unlike the function $m_j$ in \eqref{eqn-definition-j}, is no longer supported in the annulus $\{2^{j-1}\leq |\lambda|\leq 2^{j+1}\}$. For this reason, we will decompose the integral in the right-hand side of \eqref{eqn-bound-k-j-H-epsilon} as \begin{align}\label{eqn-setup-I-II}
    \begin{split}
    \int_{\ab^*} |m_j^{H_0}(\lambda)|\cdot &|\cfunc(\lambda)|^{-2}\,d\lambda \\&=\underbrace{\int_{|\lambda|\leq 2^{j+2}} |m_j^{H_0}(\lambda)|\cdot |\cfunc(\lambda)|^{-2}\,d\lambda}_{\eqqcolon \;\Circled{I}_j^{H_0}} + \underbrace{\int_{|\lambda|> 2^{j+2}} |m_j^{H_0}(\lambda)|\cdot |\cfunc(\lambda)|^{-2}\,d\lambda}_{\eqqcolon \;\Circled{II}_j^{H_0}}\,,
    \end{split}
\end{align} where the integrals $\Circled{I}_j^{H_0}$ and $\Circled{II}_j^{H_0}$ represent the contributions from the regions \emph{near} and \emph{far} from the original support of $m_j$, respectively. The rest of the proof will be about analyzing these integrals separately and then summing up over all $j$.\\

\textbf{Step 5: (The integral $\Circled{I}_j^{H_0}$)} To begin the analysis on $\Circled{I}_j^{H_0}$, an application of Cauchy-Schwarz inequality separates it into two parts i.e.,
\begin{align*}
    &\Circled{I}_j^{H_0}\;\;= \int_{|\lambda|\leq 2^{j+2}} |m_j^{H_0}(\lambda)|\cdot |\cfunc(\lambda)|^{-2}\,d\lambda\\&\;\,
    \overset{\text{Cauchy-Schwarz}}{\leq} \left(\int_{|\lambda|\leq 2^{j+2}} |m^{H_0}_j(\lambda)|^2\,d\lambda\right)^{\frac{1}{2}}\cdot \left(\int_{|\lambda|\leq 2^{j+2}} |\cfunc(\lambda)|^{-4}\,d\lambda\right)^{\frac{1}{2}}\,.
\end{align*} Now, the estimate for the Harish-Chandra $\cfunc$-function in \eqref{eqn-c-function-estimate} implies that

\begin{align}\label{eqn-first-bound-I-j}
    \begin{split}
    \Circled{I}_j^{H_0}
    &\lesssim \left(\int_{\ab^*} |m^{H_0}_j(\lambda)|^2\,d\lambda\right)^{\frac{1}{2}}\cdot 2^{j(n-\frac{a}{2})}\\&=\left(\int_{\ab^*} |(\mathcal{I}m^{H_0}_j)(\lambda)|^2\,d\lambda\right)^{\frac{1}{2}}\cdot 2^{j(n-\frac{a}{2})}
    \overset{\text{Plancherel}}{=} 2^{j(n-\frac{a}{2})}\cdot \|\ell^{H_0}_j\|_{L^2(\ab)}\,.
    \end{split}
\end{align} The last equality here follows from the Euclidean Plancherel formula on $\ab$. So, estimating $\Circled{I}_j^{H_0}$ boils down to estimating $\|\ell^{H_0}_j\|_{L^2(\ab)}$. The definition of $\ell^{H_0}_j$ in \eqref{eqn-definition-j-h-epsilon} and Lemma \ref{lem-l-j-bound} produce a bound for $\|\ell_j^{H_0}\|_{L^2(\ab)}$, which is given by \begin{align}\label{eqn-L2-bound-ell-H-epsilon}
    \begin{split}
    &\|\ell^{H_0}_j\|^2_{L^2(\ab)}\:\,=\int_{\ab} |\ell_j(H)|^2\cdot |\Omega^{H_0}(H)|^2\,dH\\&\quad \quad\;\,\overset{\text{Support of }\Omega^{H_0}}{\leq} \int_{|H|>\frac{|H_0|}{2}}|\ell_j(H)|^2\,dH\\&\quad \quad \quad \,\overset{\text{Lemma }\ref{lem-l-j-bound}}{\lesssim_{M}} 2^{2j(a+r-M)}\int_{|H|>\frac{|H_0|}{2}} |H|^{-2M}\,dH\\&\quad \quad \quad \quad\;\;\lesssim_M 2^{2j(a+r-M)}\cdot |H_0|^{-2M+{a}}
    \end{split}
\end{align} for all $M>\frac{a}{2}$. After taking a square root in the right-hand side of \eqref{eqn-L2-bound-ell-H-epsilon}, and combining it into \eqref{eqn-first-bound-I-j}, the bound on the $j$-th term $\Circled{I}_j^{H_0}$ is \begin{align}\label{eqn-final-bound-I-j}
    \begin{split}
    \Circled{I}_j^{H_0}&\lesssim_{M} |H_0|^{-M+\frac{a}{2}}\cdot 2^{j(n-\frac{a}{2})}\cdot 2^{j(a+r-M)}=|H_0|^{-M+\frac{a}{2}}\cdot 2^{j(n+r-M+\frac{a}{2})}
    \end{split}
\end{align} for all $M>\frac{a}{2}$.\\

\textbf{Step 6: (The integral $\Circled{II}_j^{H_0}$)} The analysis of this integral depends on the properties of the derivatives of the cut-off function $\omega^{H_0}$ in \eqref{eqn-derivative-pointwise-bounds-cutoff-H-epsilon} and their supports. A first step is the following lemma.
\begin{lemma}\label{lemma-bound-fourier-transform-cutoff-H-epsilon}
    For all $M'\geq 0$, there exists a constant $C_{M'}>0$ such that $$|\check{\omega}^{H_0}(\lambda)|\leq C_{M'}\cdot|H_0|^{-M'+a}\cdot \langle \lambda \rangle^{-M'}$$ for all $\lambda\in \ab^*$ such that $|\lambda|\geq 1$.
\end{lemma}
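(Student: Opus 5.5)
We need to bound $\check{\omega}^{H_0}=(\mathcal{I}^{-1}\circ\F)(\omega^{H_0})$, which up to the sign flip is
$$\check{\omega}^{H_0}(\lambda)=\int_{\ab}\omega^{H_0}(H)\,e^{\pm i\lambda(H)}\,dH .$$
The plan is the standard Euclidean integration-by-parts argument, combined with the scaling structure $\omega^{H_0}(H)=\omega(|H|/|H_0|)$. Since $|\lambda|\ge 1$, we have $\langle\lambda\rangle\le\sqrt2\,|\lambda|$, so it suffices to bound by $|\lambda|^{-M'}$.

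First treat integer $M'=N\ge 0$. Write $e^{\pm i\lambda(H)}=(\pm i)^{-N}|\lambda|^{-2N}\,(\lambda\cdot\nabla_H)^N e^{\pm i\lambda(H)}$ in a form that yields $N$ directional derivatives. More cleanly, pick a coordinate direction $j$ with $|\lambda_j|\ge|\lambda|/\sqrt a$ and use $\partial_{H_j}^N$. Integrating by parts $N$ times moves the derivatives onto $\omega^{H_0}$; there are no boundary terms because $\omega^{H_0}\in\csmooth(\ab)$. This gives
$$|\check{\omega}^{H_0}(\lambda)|\le |\lambda_j|^{-N}\int_{\ab}|\partial_{H_j}^N\omega^{H_0}(H)|\,dH .$$

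For $N\ge1$, the derivative is supported in the annulus $\{|H_0|/2<|H|<3|H_0|/4\}$, whose volume is $\lesssim|H_0|^{a}$. By \eqref{eqn-derivative-pointwise-bounds-cutoff-H-epsilon}, $|\partial^N\omega^{H_0}|\le C_N|H_0|^{-N}$ there. Hence the integral is $\lesssim|H_0|^{a-N}$, and so $|\check{\omega}^{H_0}(\lambda)|\lesssim_N|H_0|^{-N+a}|\lambda|^{-N}$.

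For $N=0$, we simply bound the integral by the volume of the ball $\{|H|\le 3|H_0|/4\}$, which gives $\lesssim|H_0|^{a}$. This matches the claimed bound with $M'=0$.

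For non-integer $M'\ge0$, write $M'=\theta N+(1-\theta)(N+1)$ with $\theta\in(0,1]$. Take the geometric mean of the bounds for $N$ and $N+1$:
$$|\check\omega^{H_0}|=|\check\omega^{H_0}|^{\theta}|\check\omega^{H_0}|^{1-\theta}\lesssim (|H_0|^{a-N}\langle\lambda\rangle^{-N})^\theta(|H_0|^{a-N-1}\langle\lambda\rangle^{-N-1})^{1-\theta}=|H_0|^{a-M'}\langle\lambda\rangle^{-M'}.$$
This is the same interpolation device used in Lemma \ref{lem-l-j-bound}.

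There is no real obstacle. The only point needing care is that the constants depend only on $N$ and not on $H_0$. This holds because the bound \eqref{eqn-derivative-pointwise-bounds-cutoff-H-epsilon} comes from the chain rule applied to $\omega(|H|/|H_0|)$. The function $|H|$ is smooth on the annulus, and each derivative of $|H|/|H_0|$ of order $k$ there is $\lesssim|H_0|^{-1}|H|^{1-k}\sim|H_0|^{-k}$, uniformly in $H_0$.
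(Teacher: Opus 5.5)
Your proposal is correct and follows essentially the same route as the paper. The paper also integrates by parts for integer $M'$, using the pointwise bounds $|\partial^\alpha\omega^{H_0}|\lesssim|H_0|^{-|\alpha|}$ and the fact that these derivatives are supported in an annulus of volume $\lesssim|H_0|^{a}$, and then interpolates for non-integer $M'$. You add details the paper leaves implicit (the dominant coordinate of $\lambda$, the $M'=0$ case, and why the constants are uniform in $H_0$).
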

\begin{proof}
    Using the bounds in \eqref{eqn-derivative-pointwise-bounds-cutoff-H-epsilon}, and the support of the derivatives of $\omega^{H_0}$, this follows from an application of integration by parts for integer $M'\geq 0$, and then an interpolation argument shows it for all $M'\in \mathbb{R}_{\geq 0}$.
\end{proof}
By the definition of $\Circled{II}_j^{H_0}$, we need to estimate $m_j^{H_0}=m_j-m_j*\check{\omega}^{H_0}$ in the region $\{|\lambda|> 2^{j+2}\}$. However, the support condition on $\zeta_j$ (as in Construction \ref{Chopping - 1}) implies that $m_j=m\cdot \zeta_j$ is supported in $\{2^{j-1}\leq|\lambda|\leq 2^{j+1}\}$, and hence, $m_j$ vanishes in the region $\{|\lambda|> 2^{j+2}\}$. So, \begin{equation}
    m_j^{H_0}\equiv -m_j*\check{\omega}^{H_0}\quad \text{ on }\quad\{|\lambda|> 2^{j+2}\}\,.
\end{equation} Then, an application of Fubini's theorem, together with the supports of the cut-off functions $\zeta_j$ (as in Construction \ref{Chopping - 1}), implies that \begin{align}\label{eqn-preliminary-bound-II-j}
    \begin{split}
    \Circled{II}_j^{H_0}&\;\;= \int_{|\lambda|> 2^{j+2}} |m_j^{H_0}(\lambda)|\cdot |\cfunc(\lambda)|^{-2}\,d\lambda\\&\;\;= \int_{|\lambda|> 2^{j+2}} |m_j*\check{\omega}^{H_0}(\lambda)|\cdot |\cfunc(\lambda)|^{-2}\,d\lambda\\
    &\overset{\text{Fubini}}{\leq} \int_{\ab^*} |m_j(\mu)|\cdot \left(\int_{|\lambda|> 2^{j+2}} |\check{\omega}^{H_0}(\lambda-\mu)|\cdot |\cfunc(\lambda)|^{-2}\,d\lambda\right)\,d\mu\\
    &\;\;\leq \int_{2^{j-1}\leq|\mu|\leq 2^{j+1}} |m(\mu)|\cdot \left(\int_{|\lambda|> 2^{j+2}} |\check{\omega}^{H_0}(\lambda-\mu)|\cdot |\cfunc(\lambda)|^{-2}\,d\lambda\right)\,d\mu\,.
    \end{split}
\end{align}
The inner integral in the last line of \eqref{eqn-preliminary-bound-II-j} can be simplified by the change of variable $\eta=\lambda-\mu$, and this changes the limits of integration to $|\eta|\geq |\lambda|-|\mu|>2^{j+2}-2^{j+1}=2^{j+1}$. With this, the inner integral becomes \begin{align}\label{eqn-first-bound-II-j}
    \begin{split}
    &\int_{|\lambda|> 2^{j+2}} |\check{\omega}^{H_0}(\lambda-\mu)|\cdot |\cfunc(\lambda)|^{-2}\,d\lambda= \int_{|\eta|> 2^{j+1}} |\check{\omega}^{H_0}(\eta)|\cdot |\cfunc(\eta+\mu)|^{-2}\,d\eta\\
    &\quad \lesssim \int_{|\eta|> 2^{j+1}} |\check{\omega}^{H_0}(\eta)|\cdot \langle \eta+\mu\rangle^{n-a}\,d\eta\lesssim \int_{|\eta|> 2^{j+1}} |\check{\omega}^{H_0}(\eta)|\cdot \langle \eta\rangle^{n-a}\,d\eta\,.
    \end{split}
\end{align} The last inequality in \eqref{eqn-first-bound-II-j} uses the fact that $\langle \mu \rangle\lesssim 2^{j+1}\lesssim \langle \eta\rangle$, and hence, $\langle \eta+\mu\rangle\lesssim \langle \eta\rangle +\langle \mu \rangle \lesssim \langle \eta\rangle$. In particular, the estimate in \eqref{eqn-first-bound-II-j} is independent of $\mu$, so the $\mu$-integral in \eqref{eqn-preliminary-bound-II-j} separates out. It can then be estimated by the bound on the Harish-Chandra symbol $m$ in Definition \ref{defn-harish-chandra-symbol}, as\begin{equation}\label{eqn-bound-mu-integral}
\int_{2^{j-1}\leq|\mu|\leq 2^{j+1}} |m(\mu)|\,d\mu \lesssim 2^{j(r+a)}\,.
\end{equation} So, applying the estimates of Lemma \ref{lemma-bound-fourier-transform-cutoff-H-epsilon} into \eqref{eqn-first-bound-II-j}, and putting \eqref{eqn-first-bound-II-j} and \eqref{eqn-bound-mu-integral} into \eqref{eqn-preliminary-bound-II-j}, the bound for the $j$-th term $\Circled{II}_j^{H_0}$ becomes \begin{align}\label{eqn-bound-II-j-final}
    \begin{split}
    \Circled{II}_j^{H_0}&\lesssim_{M'}|H_0|^{-M'+a}\cdot 2^{j(r+a)}\int_{|\eta|\geq 2^{j+1}} \langle \eta\rangle^{n-a-M'}\,d\eta\\&
    \lesssim_{M'}|H_0|^{-M'+a}\cdot 2^{j(n-M')}=|H_0|^{-M'+a}\cdot 2^{j(n+r-M'+a)}
    \end{split}
\end{align} for all $M'>n$.\\

\textbf{Step 7: (Summing over $\Circled{I}_j^{H_0}$)} The argument here is taken from the proof of \cite[Chapter VI.4, Proposition 1]{Stein93-Real-Variable-Methods}. It follows from \eqref{eqn-final-bound-I-j} that \begin{equation}\label{eqn-sum-bound-I-j-H-epsilon}
    \sum_{j=0}^\infty \Circled{I}_j^{H_0}\lesssim_{M} |H_0|^{-M+\frac{a}{2}}\cdot \sum_{j=0}^\infty 2^{j(n+r-M+\frac{a}{2})}\,.
\end{equation} 
Summing over $j$ in \eqref{eqn-sum-bound-I-j-H-epsilon} involves separating it into \emph{two} different cases --- $|H_0|\leq 1$ and $|H_0|> 1$. 

Let us first \emph{assume} that $|H_0|\leq 1$. For this fixed $H_0$, we will divide the sum into \emph{two} pieces: one for $2^j\leq |H_0|^{-1}$ and another for $2^j > |H_0|^{-1}$ i.e., \begin{equation}\label{eqn-sum-divide-into-two}
\sum_{j=0}^\infty \Circled{I}_j^{H_0}\;=\;\sum_{2^j\leq |H_0|^{-1}} \Circled{I}_j^{H_0} + \sum_{2^j > |H_0|^{-1}} \Circled{I}_j^{H_0}\quad \text{ for }|H_0|\leq 1\,.
\end{equation} For the \emph{first} sum in \eqref{eqn-sum-divide-into-two}, \emph{fix} $M$ such that $0<M-\frac{a}{2}<1$ (remember the constraint $M>\frac{a}{2}$ from \eqref{eqn-L2-bound-ell-H-epsilon}), and hence, we need to estimate the following sum: \begin{align}\label{eqn-bound-near-field-1-preliminary}
    \begin{split}
        \sum_{2^j\leq |H_0|^{-1}} \Circled{I}_j^{H_0}&\lesssim  |H_0|^{-M+\frac{a}{2}}\cdot \sum_{j: 2^j\leq |H_0|^{-1}} 2^{j(n+r-M+\frac{a}{2})}\,.
    \end{split}
\end{align} Since $0<M-\frac{a}{2}<1$, the exponent $n+r-M+\frac{a}{2}$ in \eqref{eqn-bound-near-field-1-preliminary} is strictly positive (since $r>-a$ and $n>a$), and hence, the sum over $j$ in \eqref{eqn-bound-near-field-1-preliminary} can be bounded using the (finite) geometric series formula i.e., \begin{equation}\label{eqn-sum-bound-less-than-H-0-inverse}
    \sum_{j: 2^j\leq |H_0|^{-1}} 2^{j(n+r-M+\frac{a}{2})}\lesssim |H_0|^{-n-r+M-\frac{a}{2}}\overset{\left(\because\,|H_0|\leq 1\right)}{\lesssim_{N}} |H_0|^{-n-r-N+M-\frac{a}{2}}
\end{equation} for all $N\geq 0$. By using the bound in \eqref{eqn-sum-bound-less-than-H-0-inverse} and applying it to \eqref{eqn-bound-near-field-1-preliminary}, we obtain \begin{equation}\label{eqn-bound-near-field-1}
    \sum_{2^j\leq |H_0|^{-1}} \Circled{I}_j^{H_0}\lesssim_{N} |H_0|^{-n-r-N}
\end{equation}as an estimate for the first sum in \eqref{eqn-sum-divide-into-two} for all $N\geq 0$. For the \emph{second} sum in \eqref{eqn-sum-divide-into-two}, we will \emph{fix} $M>n+r+\frac{a}{2}$, and the sum over $j$ is bounded using the (infinite) geometric series formula i.e., \begin{align}\label{eqn-bound-near-field-2}
    \begin{split}
        \sum_{2^j > |H_0|^{-1}} \Circled{I}_j^{H_0}&\lesssim|H_0|^{-M+\frac{a}{2}}\cdot\sum_{2^j > |H_0|^{-1}} 2^{j(n+r-M+\frac{a}{2})}\\&\lesssim |H_0|^{-n-r+M-\frac{a}{2}}\cdot|H_0|^{-M+\frac{a}{2}}\;= \;|H_0|^{-n-r}\; {\lesssim_{N}}\; |H_0|^{-n-r-N}
    \end{split}
\end{align} for all $N\geq 0$. This completes the estimates for the sum over $j$ of $\Circled{I}_j^{H_0}$ for $|H_0|\leq 1$. 

Now, let us turn our attention to the \emph{second} case when $|H_0|>1$. \emph{Fix} $M>0$ such that $M-\frac{a}{2}>n+r+N$ for any $N\geq 0$. Almost identically, using the (infinite) geometric series formula, one recovers the same estimates as in \eqref{eqn-bound-near-field-2}. So, the two cases combine to provide the bound \begin{equation}\label{eqn-bound-sum-near-field-final}
    \sum_{j=0}^\infty \Circled{I}_j^{H_0}\lesssim_{N} |H_0|^{-n-r-N}\quad \quad \text{for all }H_0\in \overline{\ab^+}\setminus\{0\},
\end{equation} and for all $N\geq 0$.\\

\textbf{Step 8: (Summing over $\Circled{II}_j^{H_0}$)} The methods required here are the same as in Step 7. It follows from \eqref{eqn-bound-II-j-final} that the sum over $j$ of $\Circled{II}_j^{H_0}$ can be bounded by \begin{equation}\label{eqn-sum-bound-II-j-H-epsilon}
    \sum_{j=0}^\infty \Circled{II}_j^{H_0}\lesssim_{M'} |H_0|^{-M'+a}\cdot \sum_{j=0}^\infty 2^{j(n+r-M'+a)}\,,
\end{equation} which is essentially the same as the sum over $j$ of the bounds on $\Circled{I}_j^{H_0}$ in \eqref{eqn-sum-bound-I-j-H-epsilon}. We can follow the same \emph{two} cases as in Step 7 to bound the sum over $j$ of $\Circled{II}_j^{H_0}$. The only difference is that, to perform a similar analysis to \eqref{eqn-sum-bound-less-than-H-0-inverse} while estimating the sum $\sum_{2^j\leq |H_0|^{-1}}\Circled{II}_j^{H_0}$ for $|H_0|\leq 1$, we need the exponent in \eqref{eqn-sum-bound-II-j-H-epsilon} to satisfy $n+r-M'+a>0$. This can be done by choosing $M'>n$ such that $r+a>M'-n$, which is always possible because of our assumption $r>-a$. The remaining two sums proceed identically to their counterparts in Step 7. After combining the results from the two cases, the final outcome is that \begin{equation}\label{eqn-bound-sum-far-field-final}
    \sum_{j=0}^\infty \Circled{II}_j^{H_0}\lesssim_{N} |H_0|^{-n-r-N}\quad \quad \text{for all }H_0\in \overline{\ab^+}\setminus\{0\},
\end{equation} and for all $N\geq 0$.\\

\textbf{Step 9: (Final assembly for Harish-Chandra PSDOs of order $r>-a$)} As we stated after the reductions in Step 1, the goal was to estimate $|k(e^{H_0})|$ for any non-zero $H_0\in \overline{\ab^+}$ as per the expression in \eqref{eqn-kernel-expression-h0}, whenever $r>-a$. So,  combining the results from all the previous steps, we obtain \begin{align}\label{eqn-final-bound-k-h0}
    \begin{split}
    |k(e^{H_0})|&\;\;\;\leq\;\; \sum_{j=0}^\infty |k_j(e^{H_0})|\overset{\eqref{eqn-k-j-h-epsilon-enough}}{=}\sum_{j=0}^\infty |k_j^{H_0}(e^{H_0})|\\&\overset{\eqref{eqn-setup-I-II}}{\leq} \varphi_0(e^{H_0})\cdot\left(\sum_{j=0}^\infty \Circled{I}_j^{H_0} + \sum_{j=0}^\infty \Circled{II}_j^{H_0}\right)\\
    &\;\;\;\lesssim_{N} \,\varphi_0(e^{H_0})\cdot|H_0|^{-n-r-N}
    \end{split}
\end{align}\vspace{-.001in} for all $N\geq 0$. The last inequality follows from \eqref{eqn-bound-sum-near-field-final} and \eqref{eqn-bound-sum-far-field-final}. This proves the conclusion of Theorem \ref{thm-kernel-estimates-G/K} for all equivariant Harish-Chandra pseudodifferential operators of order $r>-a$.\\

\textbf{Step 10: (Kernel estimates for equivariant Harish-Chandra PSDOs of order $r\leq -a$)} To complete the proof of Theorem \ref{thm-kernel-estimates-G/K}, we need an upgraded version of the Laplacian trick in Lemma \ref{lem-Laplacian-trick}.

\begin{lemma}\label{lem-Laplacian-trick-upgraded}
    There exists a one-parameter family of properly-supported equivariant operators $\{E_s\}_{s\in \R}$ on $G/K$ such that \begin{itemize}
        \item $E_0$ is the identity operator on $G/K$,
        \item $E_s$ is an elliptic Hörmander pseudodifferential operator of order $s$ for all $s\in \R$, and
        \item $E_{s_1}\circ E_{s_2}=E_{s_1+s_2}+R_{s_1,s_2}$, where $R_{s_1,s_2}$ is a properly-supported equivariant smoothing operator for all $s_1,s_2\in \R$.
    \end{itemize}
\end{lemma}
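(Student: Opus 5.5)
The plan is to build $E_s$ as a properly-supported truncation of a genuine one-parameter family of equivariant operators, namely the complex powers of $\rho$-shifted Laplacian. For each $s\in\R$, consider the Harish-Chandra symbol
\begin{equation*}
p_s(\lambda)\coloneqq (|\lambda|^2+|\rho|^2)^{s/2}=\langle\lambda\rangle_\rho^{\,s}.
\end{equation*}
This is $W$-invariant, smooth, and satisfies the estimates of Definition \ref{defn-harish-chandra-symbol} of order $s$. It also satisfies $p_{s_1}p_{s_2}=p_{s_1+s_2}$ and $p_0=1$. Let $k_s=\HS^{-1}p_s\in\HCschwartz'(\GmodmodK)$ be its equivariant kernel.

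Fix once and for all a bi-$K$-invariant cutoff $\chi\in\csmooth(\GmodmodK)$ with $\chi\equiv1$ on $\{|z|\le1\}$, supported in $\{|z|\le2\}$. Then define $E_s$ to be convolution with $\chi\cdot k_s$, except that we set $E_0=I$ (this is consistent, since $k_0=\delta_e$ and $\chi\delta_e=\delta_e$). Each $E_s$ is properly supported, since its kernel is compactly supported, and it is equivariant.

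\textbf{Step 1: $E_s$ is an elliptic Hörmander PSDO of order $s$.} We use that $T_{p_s}$ is a Hörmander PSDO of order $s$. This is exactly the content of Theorem \ref{thm-HC-implies-hormander}, but that theorem depends on Theorem \ref{thm-kernel-estimates-G/K}, which is still being proved; we must avoid circularity. We proceed as follows.
\begin{itemize}
\item For $s>-a$, Steps 2--9 already give the kernel estimates, so the argument of Theorem \ref{thm-HC-implies-hormander} applies.
\item For $s\le -a$, we instead use Seeley's construction of complex powers. The operator $-\Delta$ is a positive elliptic operator with spectrum bounded below by $|\rho|^2>0$. Its complex powers $(-\Delta)^{s/2}$, defined by the functional calculus, coincide with $T_{p_s}$ by Lemma \ref{lem-spherical-functions}(i). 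Seeley's theorem, applied locally in charts (it uses only a local parametrix of $-\Delta-\mu$ with parameter), shows that these powers are Hörmander PSDOs of order $s$ with principal symbol $|\xi|^s$. That symbol is elliptic.
\end{itemize}
The term $(1-\chi)k_s$ is smooth, because the singular support of $k_s$ lies in $\{e\}$. Hence $T_{p_s}-E_s$ has a smooth kernel, and $E_s$ is an elliptic Hörmander PSDO of order $s$.

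\textbf{Step 2: the composition law.} We write
\begin{equation*}
E_{s_1}E_{s_2}-E_{s_1+s_2}=\bigl(E_{s_1}E_{s_2}-T_{p_{s_1}}T_{p_{s_2}}\bigr)+\bigl(T_{p_{s_1+s_2}}-E_{s_1+s_2}\bigr),
\end{equation*}
using $T_{p_{s_1}}T_{p_{s_2}}=T_{p_{s_1+s_2}}$ from multiplicativity of the symbols.

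The second bracket is convolution with $(\chi-1)k_{s_1+s_2}$, which is smooth. For the first bracket, set $k_s=\chi k_s+(1-\chi)k_s=:a_s+b_s$. It equals
\begin{equation*}
-\,(a_{s_1}*b_{s_2}+b_{s_1}*a_{s_2}+b_{s_1}*b_{s_2}).
\end{equation*}
The terms involving a compactly supported distribution $a$ convolved with a smooth $b$ are smooth.

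The obstacle is the term $b_{s_1}*b_{s_2}$, together with the fact that the remainder must be properly supported, which the non-compact $b$'s spoil. The remedy is to regroup. The total left-hand side $E_{s_1}E_{s_2}-E_{s_1+s_2}$ has kernel
\begin{equation*}
a_{s_1}*a_{s_2}-a_{s_1+s_2},
\end{equation*}
which is compactly supported (in $\{|z|\le4\}$). By the decomposition above it differs from a smooth function only by $-b_{s_1}*b_{s_2}$, modulo smooth terms.

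To see that $b_{s_1}*b_{s_2}$ is smooth, we convolve tempered objects. By the rapid decay of $b_s$ (for $s>-a$ from Steps 2--9, in general from the Seeley/heat-kernel representation of $(-\Delta)^{s/2}$, whose off-diagonal kernel decays), $b_{s_i}\in\HCschwartz(\GmodmodK)$ up to smooth bounded errors, and $\HCschwartz$ is a convolution algebra. So $b_{s_1}*b_{s_2}$ is smooth.

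The remainder $R_{s_1,s_2}$ thus has a compactly supported, smooth kernel. It is therefore a properly-supported equivariant smoothing operator, which completes the proof.
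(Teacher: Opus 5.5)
Your argument is correct in outline, but it follows a different route from the paper.

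\textbf{The paper's route.} It defines $(-\Delta)^{s/2}$ by functional calculus and invokes Seeley/Kato to see that this is an elliptic Hörmander PSDO of order $s$. It then replaces it by a properly supported operator modulo smoothing operators (Hörmander, Prop.~18.1.22) and cites Hörmander's calculus for the composition law.

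\textbf{Your route.} You truncate the multiplier kernel $k_s=\HS^{-1}p_s$ with one fixed bi-$K$-invariant cutoff. This makes equivariance, proper support and $E_0=I$ automatic; the paper's ``pick a properly-supported $E_s$'' leaves equivariance implicit. You then derive the composition law from the exact identity $p_{s_1}p_{s_2}=p_{s_1+s_2}$ at the level of kernels, rather than from full-symbol calculus.

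\textbf{The price.} You need the off-diagonal decay $b_s=(1-\chi)k_s\in\HCschwartz(\GmodmodK)$ for the relevant $s$. Without it, $b_{s_1}*b_{s_2}$ is not defined, and the bilinear expansion of $k_{s_1}*k_{s_2}$ is not justified inside $\HCschwartz'(\GmodmodK)$. Composing non-properly-supported smoothing errors raises the same issue in the paper's approach, and the brief appeal to Hörmander's calculus there does not address it explicitly.

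\textbf{Two points need tightening.}

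First, for $s\le -a$ you justify the decay of $b_s$ only by an unproved appeal to heat-kernel representations. You also weaken the claim to ``up to smooth bounded errors'', which would not suffice: with merely bounded smooth errors, $b_{s_1}*b_{s_2}$ need not converge. You need nothing external here. In Steps 7 and 8, the hypothesis $r>-a$ is used only for the near-diagonal sums over $2^j\le |H_0|^{-1}$ with $|H_0|\le 1$. The case $|H_0|>1$ goes through for every order. So Steps 2--9, together with the derivative reduction of Step 1, already give $b_s\in\HCschwartz(\GmodmodK)$ for all $s\in\R$, without touching Step 10.

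Second, your first bullet in Step 1 is circular for some $s\ge 0$. For nonnegative order, the argument of Theorem \ref{thm-HC-implies-hormander} passes through $T_{p_s}\circ\Delta^{-N}$, of order $s-2N<0$. When $a$ is small (e.g.\ $a=1$ and $s\in(0,1]$), every such order is $\le -a$. That case requires Step 10, and hence the very lemma you are proving. Instead, use the Seeley/functional-calculus input for all $s$, exactly as the paper does. Step 1 then stands, and pseudolocality gives the smoothness of $(1-\chi)k_s$ that you use afterwards.
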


\begin{remark}
    Theorem \ref{thm-Hormander-implies-HC} implies that $E_s$ is an equivariant Harish-Chandra pseudodifferential operator of order $s$ for all $s\in \R$.
\end{remark}

\begin{proof}
    Let $\Delta$ be the Laplace-Beltrami operator on $G/K$. Using spectral theory and holomorphic functional calculus, one can define the operator $(-\Delta)^{s/2}$ for all $s\in \R$, which is an equivariant elliptic Hörmander pseudodifferential operator of order $s$. See \cite{Seeley-Complex-powers-elliptic-operator-1967} 
    for a treatment on compact manifolds. The same argument can be extended to the Laplace-Beltrami operator on $G/K$ after observing that $\Delta$ is essentially self-adjoint and the spectrum of the $-\Delta$ lies in $[|\rho|^2,\infty).$ Such an argument can be found in \cite{Kato-fractional-powers-linear-operators-1960}. \footnote{Moreover, it follows that the spherical functions, $\varphi_\lambda$'s are eigenfunctions of $\Delta^s$, and hence, the Harish-Chandra symbol of $\Delta^s$ (not properly-supported) is $(-1)^s(|\lambda|^2+|\rho|^2)^s.$} 

    We will pick a properly-supported Hörmander pseudodifferential operator $E_s$ such that $E_s$ is equal to $(-\Delta)^{s/2}$ modulo smoothing operators (see \cite[Proposition 18.1.22]{Hormander-III}). The properties of the family of operators $\{E_s\}_{s\in \R}$, listed in Lemma \ref{lem-Laplacian-trick-upgraded}, then follow from \cite[Theorem 18.1.24]{Hormander-III}.
\end{proof}

Let us \emph{fix} $s>0$ such that $-a<r+s<0$. So, Lemma \ref{lem-Laplacian-trick-upgraded} lets us write the operator $T$ as \begin{equation}\label{eqn-decomposition-T}
    T=E_{-s}\circ (E_s\circ T)+R_s\circ T\,,
\end{equation} where $R_s$ is an equivariant properly-supported smoothing operator. Firstly, thanks to Theorem \ref{thm-Hormander-implies-HC-upgrade}, the operator $R_s$, and hence, the operator $R_s\circ T$, has a Harish-Chandra symbol in $S^{-\infty,G}_{\text{HC}}(G/K)$, which as a Fréchet space is isomorphic to the Euclidean Schwartz space $\schwartz(\ab^*)$. Then, Theorem \ref{thm-Schwartz-algebra-characterization} implies that the equivariant kernel of $R_s\circ T$ is in $\HCschwartz(\GmodmodK)$. This was also the situation in Example \ref{example-convolution-schwartz}. 

Now, $E_s\circ T$ is an equivariant Harish-Chandra PSDO of order $r+s$. Since $r+s>-a$, the equivariant kernel of $E_s\circ T$ satisfies the estimates  in \eqref{eqn-final-bound-k-h0}, and so it also satisfies the estimates $(*_{r+s})$ in Theorem \ref{thm-kernel-estimates-G/K}. Furthermore, since $r+s<0$, Theorem \ref{thm-kernel-PSDO-equivalence-Euclidean} together with the definition of a Hörmander PSDO in Section \ref{subsec-Hormander-PSDO} implies that $E_s\circ T$ is an equivariant Hörmander PSDO. Now, we will decompose the operator $E_s\circ T$ into $E_s \circ T\coloneqq (E_s\circ T)_0 + (E_s\circ T)_\infty$ such that $(E_s\circ T)_0$ is a properly-supported equivariant Hörmander PSDO of order $r+s<0$, and the equivariant kernel of $(E_s\circ T)_\infty$ is in $\HCschwartz(\GmodmodK)$. So, the term $E_{-s}\circ (E_s\circ T)$ in \eqref{eqn-decomposition-T} can be written as \begin{equation}\label{eqn-decomposition-Q-s-T}
    E_{-s}\circ (E_s\circ T)=E_{-s}\circ (E_s\circ T)_0 + E_{-s}\circ (E_s\circ T)_\infty\,.
\end{equation} The operator $E_{-s}\circ (E_s\circ T)_0$ is a properly-supported equivariant Hörmander PSDO of order $r$, and hence, the equivariant kernel of $E_{-s}\circ (E_s\circ T)_0$ satisfies the estimates $(*_r)$ in Theorem \ref{thm-kernel-estimates-G/K} because of Theorem \ref{thm-kernel-estimates-Euclidean} \footnote{The absence of the term $\varphi_0(e^{H_0})$ in the estimates given by Theorem \ref{thm-kernel-estimates-Euclidean} is not a hindrance due to the proper-supportedness of $E_{-s}\circ (E_s\circ T)_0$.}. Lastly, Theorem \ref{thm-Schwartz-algebra-characterization} implies that the operator $E_{-s}\circ (E_s\circ T)_\infty$ admits an equivariant kernel in $\HCschwartz(\GmodmodK)$. Combining the kernel estimates for all the terms in the decomposition of $T$ in \eqref{eqn-decomposition-T} and \eqref{eqn-decomposition-Q-s-T}, we conclude that the equivariant kernel of $T$ satisfies the estimates $(*_r)$ in the statement of Theorem \ref{thm-kernel-estimates-G/K}. This completes the proof of Theorem \ref{thm-kernel-estimates-G/K}.
\qed

\bibliographystyle{alphaurl}
\bibliography{citation.bib}

\end{document}